\newcommand{\N}{\mathbb{N}}
\newcommand{\R}{\mathbb{R}}
\newcommand{\loc}{\mathrm{loc}}
\newcommand{\nloc}{\mathrm{nloc}}
\newcommand{\bv}{\bm{v}}
\newcommand{\bz}{\bm{z}}
\newcommand{\pd}{\partial}
\renewcommand{\div}{\, \mathrm{div} \,}
\newcommand{\I}{\mathbb{I}}
\newcommand{\bo}{\bm{\omega}}
\newcommand{\bnu}{\bm{n}}
\newcommand{\eps}{\varepsilon}
\newcommand{\D}{\mathrm{D}}
\newcommand{\W}{\mathrm{W}}
\newcommand{\curl}{\, \mathrm{curl}\, }
\newcommand{\pdnu}{\pd_{\bnu}}
\newcommand{\bu}{\bm{u}}
\newcommand{\inn}[2]{\langle #1, #2 \rangle}
\newcommand{\HH}{\mathbb{H}}
\newcommand{\VV}{\mathbb{V}}
\newcommand{\bw}{\bm{w}}
\newcommand{\bA}{\bm{A}}
\theoremstyle{plain}
\newtheorem{thm}{Theorem}[section]
\newtheorem{prop}[thm]{Proposition}
\newtheorem{lem}[thm]{Lemma}
\newtheorem{remark}{Remark}[section]
\numberwithin{equation}{section}
\title{Two phase micropolar fluid flow with nonlocal energies: Existence theory, nonpolar limits and nonlocal-to-local convergence}
\author{Kin Shing Chan \footnotemark[1] \and Kei Fong Lam \footnotemark[1]} 
\date{ }
\begin{document}

\maketitle

\renewcommand{\thefootnote}{\fnsymbol{footnote}}
\footnotetext[1]{Department of Mathematics, Hong Kong Baptist University, Kowloon Tong, Hong Kong \tt(24482951@life.hkbu.edu.hk,akflam@hkbu.edu.hk)}

\begin{abstract}
We study a nonlocal variant of a thermodynamically consistent phase field model for binary mixtures of micropolar fluids, i.e., fluids exhibiting internal rotations. The model is described by a Navier--Stokes--Cahn--Hilliard system that extends the earlier nonlocal variants of the model introduced by Abels, Garcke and Gr\"un for binary Newtonian fluid mixtures with unmatched densities. We establish the global 3D weak existence and global 2D strong well-posedness, followed by the weak convergence of the nonlocal model to its local counterpart as the nonlocal interaction kernel approaches the Dirac delta distribution. In the two dimensional setting we provide consistency estimates between strong solutions of the nonlocal micropolar model and strong solutions of nonlocal variants of the Abels--Garcke--Gr\"un model and Model H.
\end{abstract}

\noindent \textbf{Key words. } Micropolar fluids, phase field model, Cahn–Hilliard equation, Navier–Stokes equations, weak solutions, strong solutions, nonlocal energies, nonlocal-to-local convergence, consistency estimates.\\

\noindent \textbf{AMS subject classification. } 
35D30, 
35D35, 
35Q30, 
35Q35, 
76D03, 
76T06 

\section{Introduction}
The micropolar fluid model is a simple but significant extension of the well-established Navier--Stokes equations to describe the motion of fluidic bodies where each particle possess an internal angular momentum separated from the gross external angular momentum arising from rigid motions of the whole fluid. This model can be used to represent fluids containing randomly oriented particles suspended in a viscous medium, in which the deformation of the particles are neglected. Common examples include ferrofluids, bubbly flows, blood flows and liquid crystals. Introduced by A.C.~Eringen in \cite{Eringen}, see also \cite{Luka}, the model for isotropic, isothermal and incompressible micropolar fluids arises from the balance of mass, momentum and moment of momentum:
\begin{subequations}\label{microNS}
\begin{alignat}{2}
\div \bu & = 0, \label{MNS:div} \\
\pd_t \bu + (\bu \cdot \nabla) \bu & =  (\eta + \eta_r) \Delta \bu - \nabla p + 2 \eta_r \curl \bo, \label{MNS:mom} \\
\pd_t \bo + (\bu \cdot \nabla) \bo & = (c_0 + c_d - c_a) \nabla \div \bo + (c_a + c_d) \Delta \bo + 2 \eta_r(\curl \bu - 2 \bo), \label{MNS:w}
\end{alignat}
\end{subequations}
where $\bu$ denote the fluid velocity, $p$ the pressure, $\eta$ the viscosity, $\bo$ the micro-rotation, along with $c_0$, $c_a$, $c_d$ as the coefficients of angular viscosities, and $\eta_r$ as the dynamic micro-rotation viscosity. The fluid mass density has been set to $1$, and the non-symmetry of the stress tensor can be attributed to the appearance of the term $2 \eta_r \curl \bo$ in the momentum balance \eqref{MNS:mom}. An interesting observation arises from the formal limit $\eta_r \to 0$, which serves to decouple the Navier--Stokes component \eqref{MNS:div}-\eqref{MNS:mom} with the micro-rotation equation \eqref{MNS:w}. In particular, $\bo$ serves as a background vector field and no longer influence the dynamics of the flow, which is now governed by the classical Navier--Stokes equations.  In this case the fluid is termed \textit{nonpolar}, as the associated stress tensor is symmetric. This allows us to view the dynamic micro-rotation visocsity $\eta_r$ as a measure of the deviation of micropolar fluids from the conventional Navier--Stokes model, see e.g.~\cite{ChanLamstrsol,Luka2D}. Another interesting observation is that in the formal limit $\eta_r \to \infty$ we obtain from \eqref{MNS:w} the relation $\bo = \frac{1}{2} \curl \bu$, which connects the micro-rotation with the well-known fluid vorticity. Some recent results in this direction can be found in \cite{Guterres2}.

Motivated by studies and situations in which mixtures of immiscible micropolar fluids are utilized in real-world applications \cite{TAriMTNS,MDevAR,HRamkSM,PKYadSJBDS}, the authors proposed in \cite{CHLS} a diffuse interface model for two-phase micropolar fluids with unmatched densities. The model, as an extension of the one proposed by Abels, Garcke and Gr\"un (AGG) in \cite{AGGmodel} for two-phase Newtonian flow with unmatched densities, reads as follows:
\begin{subequations}\label{local:model:equ}
\begin{alignat}{2}
& \label{local:model:equ:div:0} \div \bu = 0, \\[1ex]
& \label{local:model:equ:bu} \partial_{t}(\rho(\phi) \bu) + \div(\rho(\phi) \bu \otimes \bu) - \div\left( 2\eta(\phi)\D\bu + 2\eta_{r}(\phi)\W\bu \right) \\[1ex]
\notag & \quad = - \nabla p +  \div\left( \rho'(\phi) m(\phi) \nabla \mu \otimes \bu \right) + \mu\nabla\phi + 2\curl\left( \eta_{r}(\phi)\bo \right), \\[1ex]
& \label{local:model:equ:bo} \partial_{t}(\rho(\phi) \bo) + \div(\rho(\phi) \bu \otimes \bo) - \div\left( c_{0}(\phi)(\div \bo)\mathbb{I} + 2c_{d}(\phi)\D\bo + 2c_{a}(\phi)\W\bo \right) \\[1ex] 
\notag & \quad = \div\left( \rho'(\phi) m(\phi) \nabla \mu \otimes \bo \right) + 2\eta_{r}(\phi)(\curl \bu - 2\bo), \\[1ex] 
& \label{local:model:equ:CH} \partial_{t}\phi + \bu \cdot \nabla\phi = \div\left( m(\phi)\nabla\mu \right), \\[1ex] 
& \label{local:model:equ:mu} \mu = -\sigma \eps \Delta \phi + \frac{\sigma}{\eps} F'(\phi). 
\end{alignat}
\end{subequations}
In the above $\bu$ is a mixture velocity taken as the solenoidal volume averaged velocity \cite{AGGmodel,FBoyer1,HDingPSCS}, $\bo$ is the mixture micro-rotation, $\phi$ is the phase field variable taken as the difference in volume fractions of the two micropolar fluids, $\mu$ is the associated chemical potential and $p$ is the pressure, along with symmetric and antisymmetric rate of strain tensors:
\[
\D \bu = \frac{1}{2} (\nabla \bu + (\nabla \bu)^{\top}), \quad \W \bu = \frac{1}{2} (\nabla \bu - (\nabla \bu)^{\top}).
\]
Although the structure of \eqref{local:model:equ:div:0}--\eqref{local:model:equ:bo} resembles \eqref{microNS}, one key difference lies in the fluid density $\rho$ which is now an affine linear function of $\phi$:
\begin{align}\label{rho:phi}
\rho(\phi) = \frac{\overline{\rho}_1 + \overline{\rho}_2}{2} +  \frac{\overline{\rho}_1 - \overline{\rho}_2}{2}\phi \quad \text{ for } \phi \in [-1,1],
\end{align}
with $\overline{\rho}_1$ and $\overline{\rho}_2$ as the constant mass densities of fluid $1$ and $2$, respectively. Similarly, the viscosity $\eta(\phi)$, the dynamic micro-rotation viscosity $\eta_r(\phi)$, and the coefficients of angular viscosities $c_0(\phi)$, $c_a(\phi)$ and $c_d(\phi)$ are now functions of $\phi$. It is common to choose a similar form as \eqref{rho:phi} to interpolate between two sets of values $\{\eta_{i}, \eta_{r,i}, c_{0,i}, c_{d,i}, c_{a,i}\}_{i=1,2}$ corresponding to the individual constituent fluids. Furthermore, if the constant $\eta_{r,1} = 0$, then fluid 1 will be considered as \textit{nonpolar}. 

Besides the coupling of the micropolar fluid model to a convective Cahn--Hilliard system \eqref{local:model:equ:CH}-\eqref{local:model:equ:mu} with double well potential $F$ (where for later mathematical treatment we choose $F$ to be a singular  potential so that the phase field variable $\phi$ would be naturally bounded in $[-1,1]$), thus leading to the presence of capillary forces $\mu \nabla \phi$ driven by surface tension $\sigma$, the use of the volume averaged velocity under the context of unmatched fluid densities $\overline{\rho}_1 \neq \overline{\rho}_2$ brings about an additional transport mechanism by the relative flux $\bm{J} = - \rho'(\phi) m(\phi)\nabla\mu$ related to the diffusion of the components \cite{AGGmodel}. Lastly, the parameter $0 < \eps \ll 1$ is a measure of the thickness of the interfacial region $\{|\phi| < 1\}$ separating the region occupied by fluid 1 $\{\phi = -1\}$ and the region occupied by fluid 2 $\{\phi = 1\}$. In the case of matched densities $\overline{\rho}_1 = \overline{\rho}_2$, the Navier--Stokes--Cahn--Hilliard component \eqref{local:model:equ:div:0}-\eqref{local:model:equ:bu}, \eqref{local:model:equ:CH}-\eqref{local:model:equ:mu} reduces to the well-known Model H of Hohenberg and Halperin \cite{ModelH.der.,ModelH}. We call the system \eqref{local:model:equ} as the (local) Micropolar Abels--Garcke--Gr\"un (MAGG) model.

Concerning the mathematical analysis of these diffuse interface models for two phase flow, under standard no-slip and no-flux boundary conditions, we refer to \cite{HAbel.EandU.ModelH,AGAMRT} for well-posedness and regularity of solutions to Model H.  For the AGG model we mention \cite{HAbelDDHG,HAbelDDHG2} for global weak solutions, \cite{CHNS3Dstrsol,CHNS2Dstrsol} for local strong solutions, \cite{2023CHNS} for asymptotic stabilization and regularity, as well as \cite{CCGMG1,CCGMG2,CCGMG3} for long time behavior in the context of attractors. The extension to the multicomponent setting with different densities is explored in \cite{AGP}. The inclusion of moving contact line dynamics can be incorporated via a generalized Navier boundary condition (see for instance \cite{Heida,Qian,Ren}) and a dynamic boundary condition of Allen--Cahn type. We refer to \cite{GGM,GGW} for the existence of weak solutions to Model H and to AGG with moving contact line dynamics, to \cite{GLW,AG3,KnopfStange} for the recent analysis where a Cahn--Hilliard type dynamic boundary condition is employed, and to \cite{KnopfStangeNSCH} where a surface Navier--Stokes system is used to account for viscous and convective effects at the domain boundary.

On the other hand, for the MAGG model, global weak solutions in three dimensions with moving contact line dynamics were established in \cite{CHLS}, and local strong well-posedness in three dimensions with standard no-slip, no-spin and no-flux boundary conditions was established in \cite{ChanLamstrsol}. The latter contribution enables a comparison between the MAGG model with its nonpolar variants, where consistency estimates between local strong solutions of the MAGG model and AGG model in terms of the dynamic micro-rotation viscosity $\eta_r$ have been derived.

The main focus of this paper is to investigate a nonlocal variant of the MAGG model, first derived in \cite{CHLS} when a nonlocal approximation of the Ginzburg--Landau functional proposed by Giacomin and Lebowitz \cite{Gia1,Gia2} was employed. The resulting nonlocal MAGG (nMAGG) model is identical to \eqref{local:model:equ} except that \eqref{local:model:equ:mu} is replaced by 
\begin{align}\label{nonlocal:model:equ:mu}
\mu = \eps \sigma \big ( a \phi - K \star \phi) + \frac{1}{\eps} F'(\phi)    
\end{align}
where for a prescribed symmetric convolution kernel $K$, we have
\[
a(x) = (K \star 1)(x) = \int_\Omega K(x-y) dy, \quad (K \star \phi)(x) = \int_\Omega K(x-y) \phi(y) dy.
\]
The associated total energy of the system $\mathcal{E}_{\nloc}(\bu,\bo,\phi)$ is the sum of the kinetic energy, rotation energy and nonlocal Ginzburg--Landau energy:
\begin{equation} \label{intro:nloc:ene}
\begin{aligned}
\mathcal{E}_{\nloc}(\bu,\bo,\phi) &:= \int_{\Omega}\frac{\rho(\phi)}{2} (|\bu|^2+|\bo|^2)\, dx + G_{\nloc}(\phi), \\
G_{\nloc}(\phi) & := e(\phi) + \int_\Omega F(\phi) \, dx \\
e(\phi) &:=  \frac{1}{4}\int_{\Omega}\int_{\Omega}K(x-y)(\phi(x)-\phi(y))^2\, dx \,dy = \frac{1}{2} \int_\Omega a|\phi|^2 - \phi (K \star \phi) \, dx
\end{aligned}
\end{equation}
The mathematical analysis of the nonlocal Model H (nModelH) and the nonlocal AGG (nAGG) model, as well as their variants, have been studied by many authors. We refer to \cite{nlocfrac,SFrigeri1,SFrigeri2,FrigeriGrasselliRocca} for global weak solutions, to \cite{CCGAGMG} for global well-posedness and long-time behavior, and to \cite{AbelsTera,AbelsTeraSing,Hurm} for nonlocal-to-local asymptotics when a suitably scaled family of kernels $(K_\kappa)_{\kappa > 0}$ approaches the Dirac delta distribution as $\kappa \to 0$, see e.g.~\eqref{scaled:kernel}. Our aim in this work is to develop similar analytical results for the nMAGG model, which we furnish with the following standard set of no-slip, no-spin and no-flux boundary conditions if $\Omega$ is a bounded domain:
\begin{align}\label{std:bdy}
\bu = \bm{0}, \quad \bo = \bm{0}, \quad \pdnu \mu = \nabla \mu \cdot \bm{n} = 0 \text{ on } \pd \Omega,
\end{align}
where $\bm{n}$ is the outward unit normal vector field on $\pd \Omega$, or with periodic boundary conditions if $\Omega$ is the torus $\mathbb{T}^d$. We remark that the current state-of-the-art involving the nonlocal-to-local convergence for strong solutions with convergence rates, established in \cite{Hurm}, is only for the Model H, as several key estimates involved in deriving strong solutions for the nAGG model \cite{CCGAGMG} and for the nMAGG model (Section~\ref{sec:str}) are not independent of the scaling parameter $\kappa$ in \eqref{scaled:kernel}. Deriving similar convergence rates for the nonlocal-to-local convergence for strong solutions to the nAGG and nMAGG models will be the subject of future research.

Our main contributions are summarized as follows:
\begin{itemize}
    \item The existence of weak solutions to the nMAGG model in 3D on $[0,T]$ for arbitrary $0 < T < \infty$, extending the result of \cite{SFrigeri1} to the micropolar setting;
    \item The nonlocal-to-local convergence of weak nMAGG solutions to weak nAGG solutions when the convolution kernel $K$ is scaled appropriately, extending the result of \cite{AbelsTera} to the micropolar setting;
    \item The well-posedness of strong solutions to the nMAGG model in 2D on $[0,T]$ for arbitrary $0 < T < \infty$, extending the result of \cite{CCGAGMG} to the micropolar setting;
    \item Consistency estimates between nMAGG and the nonlocal nonpolar variants (nAGG and nModelH) in terms of $\eta_r$, providing a nonlocal analogue of the consistency estimates for local models in \cite{ChanLamstrsol}.
\end{itemize}
We remark that local strong well-posedness for the nAGG and nMAGG models remains an open problem, as the semi-Galerkin approach of \cite{CCGAGMG} and the regularity assumptions on the initial conditions listed in Theorem \ref{thm:strwellposed} seem insufficient in the 3D setting. However, with the latest advancements in maximal $L^p$-regularity theory for the nonlocal Cahn--Hilliard equation, see e.g.~\cite{maxLpregnlocCH}, it seems feasible to establish local strong well-posedness to the nAGG and nMAGG models in 3D when more regular initial data are prescribed. This would be a subject of future research.



The rest of this paper is organized as follows: In Section \ref{sec:prelim} we introduce the notation, problem setting and essential preliminary results. Then, in Section \ref{sec:weak} we establish the global existence of weak solutions for both two and three spatial dimensions, while tackling the nonlocal-to-local asymptotics in a weak sense in Section \ref{sec:weakconv}. Restricting to two spatial dimensions thereafter, in Section \ref{sec:str} we establish the global well-posedness of strong solutions, which in turn allows us to develop consistency estimates between the nMAGG model with other nonlocal nonpolar models in Section \ref{sec:consistency}.

\section{Preliminaries}\label{sec:prelim}

\paragraph{Notation and convention.}
For the subsequent sections we make use of the Einstein summation convention and neglect the basis vector elements. 
\begin{itemize}
\item For a vector $\bv = (v_i)$ and a second order tensor $\bm{A} = (A_{ij})$, the gradient $\nabla \bv$ and divergence $\div \bm{A}$ are defined as
\[
(\nabla \bv)_{ij} = \pd_i v_j = \frac{\pd v_j}{\pd x_i}, \quad (\div \bm{A})_j = \pd_i A_{ij}.
\]
Meanwhile the vector $\bm{v} \cdot \bm{A}$ is defined as
\[
(\bm{v} \cdot \bm{A})_j = v_i A_{ij}.
\]
The Frobenius product $\bm{A}: \bm{B}$ of two second order tensors $\bm{A}$ and $\bm{B}$ is defined as
$\bm{A} : \bm{B} = A_{ij} B_{ij}$. For two vectors $\bm{v} = (v_i)$ and $\bm{w} = (w_i)$ we denote $\bm{v} \otimes \bm{w}$ to be the second order tensor $(\bm{v} \otimes \bm{w})_{ij} = v_i w_j$. The symbol $\mathbb{I}$ will be used to denote the identity matrix, the identity tensor or the identity mapping.
\item In three spatial dimensions, the entries of the third order Levi-Civita tensor $\bm{\eps} = (\eps_{ijk})$ are defined as
\[
\eps_{ijk} = \begin{cases}
1 & \text{ if } (i,j,k) \text{ is } (1,2,3), (2,3,1) \text{ or } (3,1,2), \\
-1 & \text{ if } (i,j,k) \text{ is } (3,2,1), (1,3,2) \text{ or } (2,1,3), \\
0 & \text{ if } i = j, \text{ or } j = k, \text{ or } k = i.
\end{cases}
\]
Then, in three spatial dimensions, the following properties are valid:
\begin{align}\label{eps:prop}
\eps_{ljk} \eps_{mjk} = 2 \delta_{lm}, \quad \eps_{jik} \eps_{jlm} = \delta_{il} \delta_{km} - \delta_{im} \delta_{kl}.
\end{align}
The cross product $\bm{a} \times \bm{b}$ between two vectors $\bm{a}$ and $\bm{b}$, as well as the curl of a vector $\bv = (v_1, v_2, v_3)^{\top}$ are defined as
\[
(\bm{a} \times \bm{b})_j = \eps_{jkl} a_k b_l, \quad (\curl \bv)_j = \eps_{jkl} \pd_k v_l = (\nabla \times \bv)_j.
\]
Then, we have the following integration by parts formula involving the curl operator:
\begin{equation}\label{IBP:curl}
\begin{aligned}
\int_\Omega \curl \bm{a} \cdot \bm{b} \, dx & = \int_{\Omega} \bm{a} \cdot \curl \bm{b} \, dx + \int_{\pd \Omega} (\bm{a} \times \bm{b}) \cdot \bnu \, dS \\
& = \int_{\Omega} \bm{a} \cdot \curl \bm{b} \, dx - \int_{\pd \Omega}  (\bm{a} \times \bnu) \cdot \bm{b} \, dS.
\end{aligned}
\end{equation}
The antisymmetric tensor $\W \bm{v} = \frac{1}{2} (\nabla \bm{v} - (\nabla \bm{v})^{\top})$ satisfies the following relation:
\begin{equation}\label{Wv:curl}
\begin{aligned}
|\curl \bm{v} |^2 & = \eps_{jik} \pd_i v_k \eps_{jlm} \pd_l v_m = (\delta_{il} \delta_{km} - \delta_{im} \delta_{kl}) \pd_i v_k \pd_l v_m \\
& = \pd_i v_k \pd_i v_k - \pd_i  v_k \pd_k v_i = \frac{1}{2}(\pd_i v_k - \pd_k v_i)(\pd_i v_k - \pd_k v_i) \\
& = 2 \W \bm{v} : \W \bm{v}.
\end{aligned}
\end{equation}
\item In two spatial dimensions, the entries of the second order Levi-Civita tensor $\bm{\eps} = (\eps_{ij})$ are defined as
\[
\eps_{ij} = \begin{cases}
1 & \text{ if } (i,j) = (1,2), \\
-1 & \text{ if } (i,j) = (2,1), \\
0 & \text{ if } i = j.
\end{cases}
\]
Then, in two spatial dimensions, the following properties are valid:
\[
\eps_{ij} \eps_{ik} =  \delta_{jk}, \quad \eps_{ij} \eps_{kl} = \delta_{ik} \delta_{jl} - \delta_{il} \delta_{jk}.
\]
The curl of a two dimensional vector $\bm{v} = (v_1, v_2)^{\top}$ is the scalar quantity
\[
\curl_2 \bm{v} = \pd_1 v_2 - \pd_2 v_1,
\]
and we define the rotation vector associated to a scalar $w$ as
\[
\curl_1 w = \begin{pmatrix}
    \pd_2 w \\ - \pd_1 w
\end{pmatrix}.
\]
Then, we have the following integration by parts formula:
\begin{align}\label{IBP:curl:2D}
        \int_\Omega \curl_2 \bm{v} w \, dx = \int_\Omega \bm{v} \cdot \curl_1 w \, dx + \int_{\pd \Omega} w \bm{v} \cdot \bm{n}^{\perp} \, dS,
\end{align}
where $\bm{n}^{\perp} = (-n_2, n_1)^{\top}$ is an unit vector orthogonal to the outer unit normal $\bm{n} = (n_1, n_2)^{\top}$ to $\pd \Omega$. Furthermore, for a scalar $w$ and a vector $\bm{v}$, we have the relation
\begin{equation}\label{2D:curlu:w}
\begin{aligned}
& \int_\Omega 2 \W \bm{v} : \W \bm{v} - 2w \curl_2 \bm{v} - 2\bm{v} \cdot \curl_1 w + 4 w^2 \, dx \\
& \quad = \int_\Omega (\pd_1 v_2 - \pd_2 v_1)^2 - 2 w (\pd_1 v_2 - \pd_2 v_1) - 2 v_1 \pd_2 w + 2v_2 \pd_1 w + 4 w^2 \, dx \\
& \quad = \int_\Omega |\curl_2 \bm{v} - 2 w|^2 \, dx - \int_{\pd \Omega} 2 w \bm{v} \cdot \bm{n}^{\perp} \, dS.
\end{aligned}
\end{equation}
\end{itemize}

\subsection{Equations in the two-dimensional setting}\label{sec:2Dcase}
Following standard convention, see e.g.~\cite{Luka2D}, we restrict the three-dimensional dynamics to the $(x_1,x_2)$-plane, with the flow independent of the $x_3$-coordinate, while having the axes of micro-rotation parallel to the $x_3$-axis. Namely, $\bu$ and $\bo$ reduces to $\bu = (u_1(t,x_1,x_2), u_2(t,x_1,x_2),0)^{\top}$ and $\bo = (0,0,\omega(t,x_1,x_2))^{\top}$, while $\phi$ and $\mu$ are scalar functions of $t$, $x_1$ and $x_2$. In a bounded domain $\Omega \subset \R^2$, with an abuse of notation we write $\bu = (u_1, u_2)^{\top}$, so that the Navier--Stokes component of the two-dimensional model arising from considering only the first and second components of \eqref{local:model:equ:bu}, which reads as:
\begin{align*}
& \pd_{t}(\rho(\phi) \bu) + \div(\rho(\phi) \bu \otimes \bu) - \div\left( 2\eta(\phi)\D\bu + 2\eta_{r}(\phi)\W\bu \right)  \\[1ex] 
& \quad = - \nabla p + \div\left( \rho'(\phi) m(\phi) \nabla \mu \otimes \bu \right)+ \mu\nabla\phi + 2\curl_1\left( \eta_{r}(\phi) \omega \right),
\end{align*}
where $\nabla$ and $\div$ denote the two-dimensional gradient and divergence operators. Then, we take the third component of \eqref{local:model:equ:bo}, which now reads for the scalar quantity $\omega$:
\begin{align*}
& \pd_{t}(\rho(\phi) \omega) + \div(\rho(\phi) \omega \bu ) - \div\left( (c_{d}(\phi) + c_{a}(\phi)) \nabla \omega \right) \\
& \quad = \div \left( \omega \rho'(\phi) m(\phi) \nabla \mu \right) + 2\eta_{r}(\phi)(\curl_2 \bu - 2\omega). 
\end{align*}
It is interesting to note that the term involving $c_0(\phi) \div \bo$ drops out under this two-dimensional setting. For simplicity, we express the sum $c_d + c_a$ as $c_{d,a}$. Hence, the nMAGG model in two dimensions reads as
\begin{subequations}\label{nonlocal:model:equ:2D}
\begin{alignat}{2}
& \label{2D:nonlocal:model:equ:div:0} \div \bu = 0, \\[1ex]
& \label{2D:nonlocal:model:equ:bu} \partial_{t}(\rho(\phi) \bu) + \div(\rho(\phi) \bu \otimes \bu) - \div\left( 2\eta(\phi)\D\bu + 2\eta_{r}(\phi)\W\bu \right)  \\[1ex] 
\notag & \quad = - \nabla p + \div\left( \rho'(\phi) m(\phi) \nabla \mu \otimes \bu \right)\mu\nabla\phi + 2\curl_1\left( \eta_{r}(\phi) \omega \right), \\[1ex]
& \label{2D:nonlocal:model:equ:bo} \partial_{t}(\rho(\phi) \omega) + \div(\rho(\phi) \omega \bu) - \div  (c_{d,a}(\phi) \nabla \omega) \\[2ex]
\notag & \quad = \div ( \omega \rho'(\phi)m(\phi) \nabla \mu) + 2\eta_{r}(\phi)(\curl_2 \bu - 2\bo), \\[1ex] 
& \label{2D:nonlocal:model:equ:CH} \partial_{t}\phi + \bu \cdot \nabla\phi = \div\left( m(\phi)\nabla\mu \right), \\[1ex] 
& \label{2D:nonlocal:model:equ:mu} \mu = \sigma \eps  (a\phi - K \star \phi) + \frac{\sigma}{\eps} F'(\phi). 
\end{alignat}
\end{subequations}

\subsection{Functional setting}

\paragraph{Notation for Banach, Bochner, Lebesgue and Sobolev spaces.} For a real Banach space $X$ and its topological dual $X^*$, the corresponding duality pairing is denote by $\inn{f}{g}_X$ for $f \in X^*$ and $g \in X$. The continuous embedding of $X$ into $Y$ and the compact embedding of $X$ into $Y$ are denoted by $X \subset Y$ and $X \Subset Y$ respectively.

For $1 \leq p \leq \infty$, the Bochner space $L^p(0,T;X)$ denotes the set of all strongly measurable $p$-integrable functions (if $p < \infty$) or essentially bounded functions (if $p = \infty$) on the time interval $[0,T]$ with values in the Banach space $X$. The space $W^{1,p}(0,T;X)$ denotes all $u \in L^p(0,T;X)$ with vector-valued distributional derivative $\frac{du}{dt} \in L^p(0,T;X)$. When $p = 2$ we use the notation $H^1(0,T;X) = W^{1,2}(0,T;X)$. The Banach space of all bounded and continuous functions $u:[0,T] \to X$ is denoted by $C^0([0,T];X)$ and equipped with the supremum norm, while $C_w([0,T];X)$ is the space of bounded and weakly continuous functions $u:[0,T] \to X$, i.e., $\inn{f}{u} : [0,T] \to \R$ is continuous for all $f \in X^*$. We employ the notation $C^\infty_0(0,T;X)$ to denote the vector space of all smooth and compactly support (in time) functions $u:(0,T) \to X$.

Let $\Omega \subset \R^d$, $d\in\{2,3\}$, denote either a bounded domain or the $d$-dimensional torus $\mathbb{T}^d$. We denote by $L^p(\Omega)$ and $W^{k,p}(\Omega)$ for $p\in [1,\infty]$ and $k \in [0,\infty)$ to be the Lebesgue and Sobolev spaces over $\Omega$, respectively. When $p = 2$, these are Hilbert spaces and we use the notation $H^k(\Omega) = W^{k,2}(\Omega)$. The $L^2(\Omega)$-inner product is denoted by $(\cdot,\cdot)$ with associated norm $\| \cdot \|$, while the norm of $W^{k,p}(\Omega)$ is denoted by $\| \cdot \|_{W^{k,p}}$. For vectorial functions, we use $L^p(\Omega;\R^d)$ and $W^{k,p}(\Omega;\R^d)$ to denote the corresponding vectorial Lebesgue and Sobolev spaces. For convenience we use the notation $Q := \Omega \times [0,T]$, $\Sigma := \pd \Omega \times [0,T]$, as well as $(\cdot,\cdot)_Q$ to denote the $L^2(Q)$-inner product, i.e., 
\[
(f,g)_Q := \int_0^T \int_\Omega fg \, dx \, dt.
\]
\paragraph{Spaces with zero mean value.} The generalized mean value of $f$ is defined as
\begin{align*}
\overline{f} = \begin{cases}
\frac{1}{|\Omega|}\int_{\Omega} f \, dx & \text{ if } f\in L^1 (\Omega)\\[1ex]
\frac{1}{|\Omega|}\inn{f}{1}_{H^{1}(\Omega)} & \text{ if } f\in H^1(\Omega)^*,
\end{cases}
\end{align*}
where $|\Omega|$ denotes the $d$-dimensional Lebesgue measure of $\Omega$. We introduce the following spaces of functions with zero generalized mean value:
\[
H^1_{(0)}(\Omega) := \{ f \in H^1(\Omega) \, : \, \overline{f} = 0 \}, \quad H^1_{(0)}(\Omega)^* := \{ f \in H^1(\Omega)^* \, : \, \overline{f} = 0 \},
\]
which are closed linear subspaces of $H^1(\Omega)$ and $H^1(\Omega)^*$, respectively, and hence they are also Hilbert spaces.

\paragraph{Spaces with zero Neumann boundary conditions.} In light of the zero Neumann boundary conditions for $\mu$, we introduce the Hilbert spaces
\[
H^k_n(\Omega) := \{ f \in H^k(\Omega) \, : \, \pdnu f = 0 \text{ on } \pd \Omega \} \text{ for } k \geq 1.
\]

\paragraph{Spaces for velocity and micro-rotation.} For the velocity field, if $\Omega$ is a bounded domain, we define for $p \in (1,\infty)$,
\begin{align*}
\mathbb{L}^p_{\sigma} & := \left \{ \bu \in L^{p}(\Omega;\R^d) \, : \, \div \bu = 0\text{ in } \Omega, \ \bu \cdot \bnu = 0 \text{ on } \partial\Omega \right \}, \\
\mathbb{W}^{1,p}_{\sigma} & : = \left\{ \bu \in W^{1,p}(\Omega;\R^d) \, : \,\div \bu = 0\text{ in } \Omega, \ \bu = \bm{0}\text{ on } \partial\Omega \right\}, 
\end{align*}
as the completion of the space of divergence-free vector fields in $C^\infty_0(\Omega;\R^d)$ with respect to the norms of $L^p(\Omega;
\R^d)$ and $W^{1,p}(\Omega;\R^d)$. Then, for $k \in \N$, $k \geq 2$ we set $\mathbb{W}^{k,p}_\sigma := W^{k,p}(\Omega;\R^d) \cap \mathbb{W}^{1,p}_\sigma$. When $p = 2$, we use the notation
\[
\HH_{\sigma} = \HH^0_{\sigma} := \mathbb{L}^2_\sigma, \quad \HH^k_\sigma := \mathbb{W}^{k,2}_\sigma.
\]
If $\Omega$ is the torus $\mathbb{T}^d$, we modify the definition to 
\begin{align*}
\mathbb{L}^p_{\sigma} & := \left \{ \bu \in L^{p}(\Omega;\R^d) \, : \, \div \bu = 0\text{ in } \Omega, \ \overline{\bu} = 0 \right \}, \\
\mathbb{W}^{1,p}_{\sigma} & : = \left\{ \bu \in W^{1,p}(\Omega;\R^d) \, : \,\div \bu = 0\text{ in } \Omega, \ \overline{\bu} = 0 \right\}.
\end{align*}
For the micro-rotation, if $\Omega$ is a bounded domain, we define for $k \in \N$, $k \geq 2$,
\begin{align*}
\HH = \HH^0 := L^{2}(\Omega;\R^d), \quad \HH^1 := \left \{ \bo \in \HH \, : \, \bo = \bm{0} \text{ on } \partial\Omega \right\}, \quad 
\HH^{k} = H^{k}(\Omega;\R^d) \cap \HH.
\end{align*}
If $\Omega$ is the torus $\mathbb{T}^d$, we modify the definition to $\HH^k = H^k(\Omega;\R^d)$.

\paragraph{The Stokes operator.}
We define the Stokes operator $\bm{A}_S : \HH^1_{\sigma} \to (\HH^1_{\sigma})^*$ by
\[
\langle \bm{A}_S \bu, \bm{v} \rangle_{\HH^1_{\sigma}} := (\nabla \bu, \nabla \bm{v}) \quad \forall \bm{v} \in \HH^1_{\sigma}.
\]
It is a continuous linear isomorphism with domain $D(\bm{A}_S) = \HH^2_{\sigma}$, and using the Leray projection $\mathbb{P} : L^2(\Omega;\R^d) \to \HH_\sigma$, we have the representation $\bm{A}_S = - \mathbb{P} \Delta$ with $\Delta$ as the standard (vectorial) Laplace operator. Its inverse $\bm{A}_S^{-1} : \HH_\sigma \to \HH_\sigma$ is a self-adjoint compact operator, and by spectral decomposition we can define fractional operators $\bm{A}_S^q$ for all $q \in \R$ with domains $D(\bm{A}_S^{q}) \subset H^{2q}(\Omega;\R^d)$. Furthermore, for $\bu, \bm{v} \in \HH_\sigma$, we set
\[
(\bu, \bm{v})_\sigma := (\nabla \bm{A}_S^{-1} \bu, \nabla \bm{A}_S^{-1} \bm{v}),
\]
which defines an inner product on $(\HH^1_\sigma)^*$. The induced norm 
\[
\| \bu \|_\sigma := \| \nabla \bm{A}_S^{-1} \bu \|
\]
is equivalent to the standard norm on $(\HH^1_\sigma)^*$. By Poincar\'e's inequality there exists a positive constant $C_{S,1}$ such that 
\begin{align}\label{Stokes:Poin}
    \| \bm{A}_S^{-1} \bu \|_{\HH^1_{\sigma}} \leq C_{S,1} \| \bu \|_{\sigma} \quad \forall \bu \in \HH_{\sigma},
\end{align}
and by the regularity theory involving the Stokes operator, there exists a positive constant $C_{S,2}$ such that 
\begin{align}\label{Stokes:reg}
    \| \bm{A}_S^{-1} \bu \|_{\HH^2_\sigma} \leq C_{S,2} \| \bu \| \quad \forall \bu \in \HH_\sigma.
\end{align}

\paragraph{The Laplace operator.} For the micro-rotation we use the notation $\bm{A}_1$ as the vectorial Dirichlet--Laplacian with domain $D(\bm{A}_1) = \HH^2$ if $\Omega$ is a bounded domain, and as the vectorial Laplacian with periodic boundary conditions if $\Omega$ is the torus $\mathbb{T}^d$. It is clear that its inverse $\bm{A}_1^{-1} : \HH \to \HH$ is a self-adjoint compact operator. Analogous to the setting of the Stokes operator, the equivalent $(\HH^1)^{*}$ norm is given by 
\begin{align}
 \|\bo\|_{\#} := \|\nabla \bA^{-1}_{1} \bo\|
\end{align}
and the $\HH^1$- and the $\HH^2$-estimates for $\bA^{-1}_{1} \bo$ are given by
\begin{align}
&\|\bA^{-1}_{1} \bo\|_{\HH^1} \leq C_{1,1} \| \bo \|_{\#} \quad \forall \bo \in (\HH^1)^*, \\
&\|\bA^{-1}_{1} \bo\|_{\HH^2} \leq C_{1,2} \| \bo \| \quad \forall \bo \in \HH,
\end{align}
for some constant $C_{1,1}, C_{1,2} > 0$.
For the phase field variable we need the scalar Neumann Laplacian $A_{N}$ defined as
\[
\langle A_N f, g \rangle_{H^1(\Omega)} := (\nabla f, \nabla g) \quad \forall g \in H^1(\Omega)
\]
with domain $D(A_N) = H^2_n(\Omega)$. Its inverse $A_{N}^{-1} : H^1_{(0)}(\Omega)^* \to H^1(\Omega)$ provides an equivalent norm $\sqrt{ \| f - \overline{f}\|_*^2 + |\overline{f}|^2}$ for any $f \in H^1_{(0)}(\Omega)^*$, where
\begin{align}
 \|h\|_{*} := \|\nabla A^{-1}_{N} h\| \quad \forall h \in H^1_{(0)}(\Omega)^*,
\end{align}
to the usual $H^1(\Omega)^*$-norm. Moreover, analogous $H^{1}(\Omega)$- and $H^{2}(\Omega)$- estimates for $A_N^{-1}h$ are given by
\begin{align}
& \| A^{-1}_{N} h\|_{H^1} \leq C_{N,1} \|h\|_{*} \quad \forall h \in H^1_{(0)}(\Omega)^*, \\
& \| A^{-1}_{N} h\|_{H^2} \leq C_{N,2} \|h\| \quad \forall f \in L^2(\Omega) \text{ and } \overline{f} = 0,
\end{align}
for some constants $C_{N,1}, C_{N,2} > 0$.\\

\noindent We introduce the unbounded linear scalar operator $B : H^1(\Omega) \to H^1(\Omega)^*$ by 
\[
\langle B f, g \rangle_{H^1(\Omega)} := (\nabla f, \nabla g) + (f,g).
\]
If $\Omega$ is a bounded domain, then we may identify $B$ as the operator $-\Delta_N + \mathbb{I}$ with domain $D(B) = H^2_n(\Omega)$. Its inverse $B^{-1} : L^2(\Omega) \to L^2(\Omega)$ is a self-adjoint compact operator on $L^2(\Omega)$.

\subsection{Main assumptions}\label{allass}
\begin{enumerate}[label=$(\mathrm{A \arabic*})$, ref = $\mathrm{A \arabic*}$]
\item \label{gen:ass1} $\Omega \subset \R^{d}$, $d \in \{2,3\}$, is either a bounded domain with $C^\infty$ boundary or the torus $\mathbb{T}^d = (\R / \mathbb{Z})^d$.
\item \label{gen:ass2} The kernel $K$ is symmetric, i.e., $K(x) = K(-x)$ for all $x \in \R^d$, and satisfies
$K \in W^{1,1}(\R^d)$ with $a(x) := \int_{\Omega}K(x - y)\, dy = (K \star 1)(x) \geq 0$ for a.e.~$x \in \Omega$.
\item \label{gen:ass3} The mobility function $m$ and the viscosity functions $\eta$, $\eta_r$ $c_{0}$, $c_{d}$, $c_{a}$ belong to $W^{1,\infty}(\R)$ and satisfy 
\begin{equation*}
f_* \leq f(s) \leq f^* \quad \forall s\in \R,
\end{equation*}
for $f \in \{ m, \eta, c_0, c_d, c_a\}$ where $f_*$ and $f^*$ are positive constants, while $\eta_r$ satisfies
\[
\eta_{r,*} \leq \eta_r(s) \leq \eta_{r}^* \quad \forall s \in \R,
\]
with $\eta_r^* > 0$ and $\eta_{r,*} \geq 0$.
\end{enumerate}
Having $\eta_{r,*} = 0$ allows for the possibility that one of the constituent fluid in the mixture can be nonpolar.  Concerning the potential function $F$, we always have in mind the logarithmic double-well potential $F$ written as the decomposition $F = F_{1} + F_{2}$, where
\begin{align}\label{log:pot}
F_{1}(s) = \frac{\theta}{2}\left( (1 + s)\log(1 + s) + (1 - s)\log(1 - s) \right), \quad F_{2}(s) = - \frac{\theta_{c}}{2}s^{2}
\end{align}
with constants $0 < \theta < \theta_c$. It can be shown that \eqref{log:pot} fulfills the following assumptions:

\begin{enumerate}[label=$(\mathrm{B \arabic*})$, ref = $\mathrm{B \arabic*}$]
\item \label{sing:pot:ass1} $F_{1} \in C^{p}(-1,1)$ for some fixed even integer $p \geq 4$ and $F_{2} \in C^{2,1}([-1,1])$ such that 
\[
\lim_{s \to \pm 1} F'(s) = \pm \infty.
\]
We extend $F(s) = +\infty$ for $s \notin [-1,1]$, and without loss of generality, we set $F(0) = 0$, $F'(0) = 0$, and so $F(s) \geq 0$ for all $s \in [-1,1]$.
\item \label{sing:pot:ass2} There exist constants $\delta_{0} > 0$ and $\eps_{0} > 0$ such that
\begin{equation*}
F_{1}^{(p)}(s) := \frac{d^p F_1}{d s^p}(s) \geq \delta_{0} \quad \forall s \in ( - 1, - 1 + \eps_{0}] \cup [ 1 - \eps_{0},1 ),
\end{equation*}
and $F_1^{(p)}$ is non-decreasing in $[1-\eps_0,1)$ and non-increasing in $(-1,-1+\eps_0]$. 

Furthermore, there exists $\tilde{\eps}_{0} > 0$ such that, for each $k = 0,1,\dots,p$ and each $j = 0,1,\dots,\frac{p - 2}{2}$
\begin{subequations}
\begin{alignat*}{3}
& F_{1}^{(k)}(s) \geq 0 &&\quad \forall s \in [1 - \tilde{\eps}_{0},1 ) \\[1ex]
& F_{1}^{(2j + 2)}(s) \geq 0, \quad F_{1}^{(2j + 1)}(s) \leq 0 && \quad \forall s \in ( - 1, - 1 + \tilde{\eps}_{0} ].
\end{alignat*}
\end{subequations}
\item \label{sing:pot:ass5} There exists a constant $\hat{\alpha} > 0$ such that
\begin{equation*}
F''(s) + a(x) \geq \hat{\alpha} \quad \forall s \in ( -1,1) \text{ and for a.e.~} x \in \Omega.
\end{equation*}
\item \label{sing:pot:ass6} There exists $\eps_1 > 0$, $r \in [2,\infty)$ and a continuous function $h:(0,1) \to \R_+$, $h(\delta) = o(\delta^{4/r})$ as $\delta \to 0^+$ such that $F''$ is monotone non-decreasing on $[1-\eps_1, 1)$ and
\begin{subequations}
\begin{alignat*}{2}
& F''(s) \leq Ce^{C\left| F'(s) \right|^{\beta}} && \quad \forall s \in ( - 1,1), \\[1ex]
& F''(1 - 2\delta)h(\delta) \geq 1 &&\quad  \forall\delta \leq \frac{\eps_1}{2},
\end{alignat*}
\end{subequations}
for some constant $C>0$ and $\beta \in [1,2)$.
\end{enumerate}


For the nonlocal-to-local convergence, we make further assumptions on the structure of the kernel $K$, which we take as
\begin{align}\label{scaled:kernel}
K_\kappa(x) := \frac{\gamma_\kappa(|x|)}{|x|^2} \quad \forall x \in \Omega, \ \forall \kappa > 0,
\end{align}
where $\gamma_\kappa$ satisfies
\begin{enumerate}[label=$(\mathrm{C \arabic*})$, ref = $\mathrm{C \arabic*}$]
\item \label{nonl:to:loc:weak:ass} For any $\kappa > 0$, let $\gamma_\kappa \in L^1_{\loc}(\R; [0,\infty))$ be a family of functions satisfying $\gamma_\kappa(-r) = \gamma_\kappa(r)$ for all $r \in \R$ and 
\begin{equation*}
\begin{aligned}
\int_{0}^{+ \infty}{\gamma_{\kappa}(r)r^{d - 1}}dr = \frac{2}{C_{d}} & \quad \forall \kappa >0, \\ 
 \lim_{\kappa \rightarrow 0^{+}} \int_{\delta}^{+ \infty}\gamma_{\kappa}(r)r^{d - 1}dr = 0 &  \quad \forall \delta > 0,
\end{aligned}
\end{equation*}
where the constant $C_d$ is defined as the following integral over the $(d-1)$-dimensional unit sphere $\mathbb{S}^{d-1}$:
\[
C_{d} = \int_{\mathbb{S}^{d - 1}}^{}\left| \bm{e}_{1} \cdot \sigma \right|^{2}d\mathcal{H}^{d - 1}(\sigma),
\]
with $(d-1)$-dimensional Hausdorff measure $\mathcal{H}^{d-1}$ and $\bm{e}_1 = (1, 0, \dots,0)^{\top} \in \R^{n}$ is the first canonical unit vector.
\end{enumerate}
If $\Omega$ is the torus $\mathbb{T}^d$, we further assumption that $\gamma_\kappa$ is compactly supported in $[0,1)$ for all $\kappa > 0$.

\section{Global weak existence}\label{sec:weak}
Since the values of $\sigma$ and $\eps$ has no significance in the forthcoming analysis, we set their values to equal 1. The main result of this section is the existence of global-in-time weak solutions to the nMAGG model which in strong formulation reads as
\begin{subequations}\label{nonl:model:equ}
\begin{alignat}{2}
& \label{3D;nonlocal:model:equ:div:0} \div \bu = 0, \\[1ex]
& \label{3D;nonlocal:model:equ:bu} \partial_{t}(\rho(\phi) \bu) + \div(\rho(\phi) \bu \otimes \bu) - \div\left( 2\eta(\phi)\D\bu + 2\eta_{r}(\phi)\W\bu \right)\\[1ex] 
\notag & \quad = -  \nabla p + \div\left( \tfrac{\overline{\rho}_1 - \overline{\rho}_2}{2} m(\phi) \nabla \mu \otimes \bu  \right) + \mu\nabla\phi + 2\curl\left( \eta_{r}(\phi)\bo \right), \\[1ex]
& \label{3D:nonlocal:model:equ:bo} \partial_{t}(\rho(\phi) \bo) + \div(\rho(\phi)  \bu \otimes \bo) - \div\left( c_{0}(\phi)(\div \bo)\mathbb{I} + 2c_{d}(\phi)\D\bo + 2c_{a}(\phi)\W\bo \right) \\[1ex] 
\notag & \quad = \div\left( \tfrac{\overline{\rho}_1 - \overline{\rho}_2}{2} m(\phi) \nabla \mu \otimes \bo \right) + 2\eta_{r}(\phi)(\curl \bu - 2\bo), \\[1ex] 
& \label{3D:nonlocal:model:equ:CH} \partial_{t}\phi + \bu \cdot \nabla\phi = \div\left( m(\phi)\nabla\mu \right), \\[1ex] 
& \label{3D:nonlocal:model:equ:mu} \mu = a \phi - K \star \phi + F'(\phi),
\end{alignat}
\end{subequations}
furnished with no-slip $\bu = \bm{0}$, no-spin $\bo = \bm{0}$ and no-flux $m(\phi) \pdnu \mu = 0$ boundary conditions on $\pd \Omega$. 

\begin{thm}\label{thm:nMAGG:weaksol}
Suppose that \eqref{gen:ass1}--\eqref{gen:ass3} and \eqref{sing:pot:ass1}--\eqref{sing:pot:ass5} hold for some fixed even integer $p \geq 4$. Let $\bu_{0} \in \mathbb{H}_{\sigma}$, $\bo_{0}\in \mathbb{H}$ and $\phi_{0} \in L^{\infty}(\Omega)$ such that $F( \phi_{0} ) \in L^{1}(\Omega)$ and $| \overline{\phi_{0}}| < 1$. Then, for all $T > 0$, there exists a weak solution $(\bu,\bo,\phi, \mu)$ to the nMAGG model on the time interval $[0,T]$ corresponding to initial data $(\bu_{0},\bo_{0},\phi_{0})$ in the following sense:
\begin{itemize}
\item Regularity
\begin{subequations} \label{nloc:weak:sol:reg}
\begin{alignat}{2}
& \bu \in C_{w} ([0,T];\HH_\sigma) \cap L^{2}( 0,T;\HH_\sigma^{1}) \text{ with } \partial_{t}(\rho(\phi) \bu) \in L^{\frac{4}{3}}( 0,T; D(\bm{A}_S)^{*} ), \\[1ex]
& \bo \in C_{w}([0,T];\HH ) \cap L^{2}( 0,T;\HH^{1}) \text{ with } \partial_{t}(\rho(\phi) \bo) \in L^{\frac{4}{3}}( 0,T; D(\bm{A}_1)^*) ),\\[1ex]
& \phi \in L^{\infty}( 0,T;L^{p}(\Omega)) \cap L^{2} ( 0,T;H^{1}(\Omega)) \text{ with } \partial_{t}\phi \in L^{2}( 0,T;H^{1}(\Omega)^{*} ), \\[1ex]
& \phi \in L^{\infty}(Q) \text{ such that } | \phi(x,t) | < 1 \text{ a.e.~} (x,t) \in Q, \\[1ex]
& \mu \in L^{2}( 0,T;H^{1}(\Omega)).
\end{alignat}
\end{subequations}
\item Equations
\begin{subequations}\label{weak:nloc:1}
\begin{alignat}{2}
& \label{weak:form:bu:sec2} \left\langle \partial_{t}(\rho(\phi) \bu),\bv \right\rangle_{D(\bm{A}_S)} - (\rho(\phi) \bu \otimes \bu,\nabla \bv) \\[1ex]
\notag & \qquad + \left( 2\eta(\phi)\D\bu,\D\bv \right) + \left( 2\eta_{r}(\phi)\W\bu,\W\bv \right) - \left( \bm{J} \otimes \bu ,\nabla \bv \right) \\[1ex] 
\notag  & \quad  = - (\phi\nabla\mu,\bv) + \left( 2\curl (\eta_{r}(\phi)\bo),\bv \right), \\[1ex]
& \label{weak:form:bo:sec2} \left\langle \partial_{t}(\rho(\phi) \bo),\bz \right\rangle_{D\left(\bm{A}_1\right)} - (\rho(\phi) \bu \otimes \bo,\nabla \bz) + \left( c_{0}(\phi)\div \bo,\div \bz \right) \\[1ex]
\notag & \qquad + \left( 2c_{d}(\phi)\D\bo,\D\bz \right) + \left( 2c_{a}(\phi)\W\bo,\W\bz \right) - \left( \bm{J} \otimes \bo, \nabla \bz \right) \\[1ex] 
\notag & \quad = \left( 2\eta_{r}(\phi)\curl\bu,\bz \right) - \left( 4\eta_{r}(\phi)\bo,\bz \right), \\[1ex]
\label{weak:form:phi} & \left\langle \partial_{t}\phi,\psi \right\rangle_{H^1(\Omega)} + \left( m(\phi)\nabla\mu,\nabla\psi \right) = (\phi \bu,\nabla\psi),
\end{alignat}
\end{subequations}
for all $\psi \in H^{1}(\Omega)$, $\bm{v} \in D(\bm{A}_S)$, $\bz \in D(\bm{A}_1)$ and for almost any $t \in (0,T)$, along with 
\begin{subequations}\label{weak:nloc:2}
\begin{alignat}{2}
\label{weak:nloc:mu} \mu & = a \phi - K \star \phi + F'(\phi) && \quad \text{ a.e.~in } Q, \\[1ex]
\label{weak:nloc:J} \bm{J} & = -\tfrac{\overline{\rho}_1 - \overline{\rho}_2}{2} m(\phi) \nabla \mu && \quad \text{ a.e.~in } Q, \\[1ex]
\label{weak:nloc:rho} \rho(\phi) & = \tfrac{\overline{\rho}_1 - \overline{\rho}_2}{2} \phi + \tfrac{\overline{\rho}_1+ \overline{\rho}_2}{2} &&  \quad \text{ a.e.~in } Q.
\end{alignat}
\end{subequations}
\item Energy inequality
\begin{equation} \label{nloc:ene:ineq:sec2}
\begin{aligned}
& \mathcal{E}_{\nloc}(\bu(t), \bo(t), \phi(t)) \\
& \qquad  + \int_s^t \int_\Omega m(\phi) |\nabla \mu|^2 + 2 \eta(\phi) |\D \bu|^2 + 4 \eta_r(\phi)  | \tfrac{1}{2} \curl u - \bo |^2  \, dx \,  d\tau \\
& \qquad + \int_s^t \int_\Omega 2 c_0(\phi) |\div \bo|^2 + 2 c_d(\phi) |\D \bo|^2 + 2 c_a(\phi)|\W \bo|^2 \, dx \, d \tau \\
& \quad \leq \mathcal{E}_{\nloc}(\bu(s), \bo(s), \phi(s))
\end{aligned}
\end{equation}
for almost all $s \in [0,T)$, including $s = 0$, and for all $t \in [s,T)$, where $\mathcal{E}_{\nloc}$ is defined in \eqref{intro:nloc:ene}.
\item 
  Initial conditions
\begin{equation}
\left( \bu,\bo,\phi \right)|_{t=0} = \left( \bu_{0},\bo_{0},\phi_{0} \right)
\end{equation}
\end{itemize}
\end{thm}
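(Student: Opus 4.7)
The plan is to prove Theorem \ref{thm:nMAGG:weaksol} by a two-level approximation scheme adapted from the nonlocal AGG analysis of \cite{SFrigeri1,SFrigeri2}: first regularize the singular potential $F$ by a family of globally defined $C^2$ approximants $\{F_\lambda\}_{\lambda \in (0,\lambda^*)}$ satisfying uniform versions of \eqref{sing:pot:ass1}--\eqref{sing:pot:ass5}, and then apply a semi-Galerkin discretization at each fixed $\lambda$ in which $\bu$ and $\bo$ are expanded over the first $n$ eigenfunctions of the Stokes operator $\bm{A}_S$ and the vectorial Dirichlet--Laplacian $\bm{A}_1$, respectively, while the nonlocal Cahn--Hilliard pair $(\phi^n_\lambda, \mu^n_\lambda)$ is retained as a continuous PDE with $\bu^n_\lambda$ treated as a given transport coefficient. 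A Schauder-type fixed-point argument on a short time interval, extended to $[0,T]$ via a priori estimates, provides an approximate solution $(\bu^n_\lambda,\bo^n_\lambda,\phi^n_\lambda,\mu^n_\lambda)$ at each level.

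The critical step is the derivation of an energy identity satisfied by the approximants. Testing the Galerkin version of \eqref{3D;nonlocal:model:equ:bu} with $\bu^n_\lambda$, \eqref{3D:nonlocal:model:equ:bo} with $\bo^n_\lambda$, and pairing \eqref{3D:nonlocal:model:equ:CH} with $\mu^n_\lambda$ together with differentiating $G_{\nloc}(\phi^n_\lambda)$ in time, the cross-diffusion contributions combine with the kinetic-energy time derivatives so that the $\rho'(\phi)$ terms cancel, mirroring the AGG calculation. The coupling between $\bu$ and $\bo$ through $2\curl(\eta_r(\phi)\bo)$ and $2\eta_r(\phi)(\curl\bu - 2\bo)$ collapses via integration by parts and the pointwise identity \eqref{Wv:curl} into the favorable dissipation $4\eta_r(\phi)|\tfrac{1}{2}\curl\bu - \bo|^2$ present in \eqref{nloc:ene:ineq:sec2}. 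The spectral-gap bound in \eqref{sing:pot:ass5} applied to the identity $\nabla\mu = (\nabla a)\phi + a\nabla\phi - \nabla(K\star\phi) + F''(\phi)\nabla\phi$ converts the $L^2(Q)$-bound on $\nabla\mu^n_\lambda$ into a bound on $\nabla\phi^n_\lambda$. This yields $n,\lambda$-independent estimates for the quantities listed in \eqref{nloc:weak:sol:reg}; the $L^{4/3}$-bounds on $\partial_t(\rho(\phi^n_\lambda)\bu^n_\lambda)$ and $\partial_t(\rho(\phi^n_\lambda)\bo^n_\lambda)$ in the respective dual spaces are read off from the equations using 3D Sobolev embeddings and standard interpolation.

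Passage to the limit $n\to\infty$ with $\lambda>0$ fixed proceeds via Aubin--Lions: strong convergence of $\phi^n_\lambda$ in $L^2(0,T;L^q(\Omega))$ for $q<6$, together with strong convergence of $\bu^n_\lambda$ and $\bo^n_\lambda$ in $L^2(0,T;L^2(\Omega))$, suffices to identify the limits of $\rho(\phi)\bu\otimes\bu$, $\rho(\phi)\bu\otimes\bo$, $\eta_r(\phi)\bo$, and $F'_\lambda(\phi)$, while weak convergence of $\nabla\mu^n_\lambda$ handles $\phi\nabla\mu$ and, combined with strong convergence of $\bu^n_\lambda$, the cross-diffusion term $m(\phi)\nabla\mu\otimes\bu$ in the sense of distributions against $D(\bm{A}_S)$ test functions. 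In the subsequent limit $\lambda\to 0^+$, testing \eqref{3D:nonlocal:model:equ:mu} against $F'_\lambda(\phi_\lambda)$ and using \eqref{sing:pot:ass5} yields a uniform $L^2(Q)$-bound on $F'_\lambda(\phi_\lambda)$, which by \eqref{sing:pot:ass1} enforces the strict separation $|\phi|<1$ a.e.~in $Q$, while \eqref{sing:pot:ass2} upgrades this to the $L^\infty(0,T;L^p(\Omega))$-bound on $\phi$ via a De Giorgi--Stampacchia truncation adapted to the nonlocal chemical potential. The energy inequality \eqref{nloc:ene:ineq:sec2} is recovered from weak/weak-$\ast$ lower semicontinuity of the convex parts of $\mathcal{E}_{\nloc}$ and the strong convergences established above.

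The main technical obstacle will be identifying $m(\phi^n_\lambda)\nabla\mu^n_\lambda\otimes\bu^n_\lambda$ in the limit, since the energy estimate alone furnishes only weak $L^2(Q)$-convergence of $\nabla\mu^n_\lambda$; in 3D this product sits only in $L^{4/3}(Q)$, which is precisely why the weak formulation \eqref{weak:form:bu:sec2} uses test functions in $D(\bm{A}_S)$ rather than in $\HH^1_\sigma$. The nonlocal structure of $\mu$ mitigates this: strong convergence of $\phi^n_\lambda$ produces a.e.~convergence of $(\nabla a)\phi^n_\lambda$ and $\nabla(K\star\phi^n_\lambda)$, while a Minty-type monotonicity argument exploiting again $F''_\lambda+a\geq\hat\alpha$ promotes $\nabla\phi^n_\lambda$ to strong convergence in $L^2(Q)$ and hence upgrades $\nabla\mu^n_\lambda$. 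A secondary delicate point is to construct $F_\lambda$ so that \eqref{sing:pot:ass2}--\eqref{sing:pot:ass5} transfer to the approximation with constants independent of $\lambda$, so that the $L^\infty(0,T;L^p(\Omega))$-bound on $\phi$ and the strict separation are preserved in the singular limit.
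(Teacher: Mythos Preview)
Your scheme has a genuine gap at the energy-identity step. Once $F$ is replaced by a globally defined $C^2$ approximant $F_\lambda$, the order parameter $\phi^n_\lambda$ is no longer confined to $[-1,1]$, so the affine density \eqref{rho:phi} must be extended to all of $\R$. If you extend it linearly, the cancellation you describe does go through, but for unmatched densities $\overline\rho_1\neq\overline\rho_2$ the function $\rho$ then becomes negative for large $|\phi^n_\lambda|$ and the kinetic term $\int\tfrac{\rho}{2}(|\bu|^2+|\bo|^2)$ no longer controls $\|\bu\|+\|\bo\|$. If instead you extend $\rho$ as a $C^2$ function bounded below by a positive constant, then $\rho'(\phi)$ is no longer constant and the computation you sketch leaves the residual
\[
\tfrac12\int_\Omega \rho''(\phi^n_\lambda)\,m(\phi^n_\lambda)\,\nabla\mu^n_\lambda\cdot\nabla\phi^n_\lambda\,\bigl(|\bu^n_\lambda|^2+|\bo^n_\lambda|^2\bigr)\,dx
\]
in the energy balance; this is exactly the term $\tfrac12 R(|\bu|^2+|\bo|^2)$ in the paper's notation, it has no sign, and at the weak-solution level of regularity it cannot be absorbed by the dissipation with $n$-independent constants. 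Your statement that ``the $\rho'(\phi)$ terms cancel, mirroring the AGG calculation'' is therefore incorrect at the regularized stage.

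The paper handles precisely this obstruction by a different architecture: it uses a \emph{full} Galerkin scheme (projecting also the Cahn--Hilliard pair), extends $\rho$ as in \eqref{ass:rho:approx}, inserts the correction $\tfrac12 R\bu$ and $\tfrac12 R\bo$ into the momentum equations, and adds $\delta$-regularizations $-\delta\Delta\phi$ in \eqref{app:model:mu} and $\delta\bm{A}_S^3\bu$, $\delta\bm{A}_1^3\bo$ in \eqref{app:model:u}--\eqref{app:model:w}. These higher-order terms furnish, at fixed $\delta$, the extra regularity $\bu,\bo\in L^2(0,T;H^3)$, $\phi\in L^2(0,T;H^2)$ needed to control the $R$-contribution and to pass $n\to\infty$ and then $\epsilon\to0$; only after the singular potential forces $|\phi_\delta|<1$ (whence $R\equiv0$) is the final limit $\delta\to0$ taken. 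Your two-parameter scheme $(n,\lambda)$ omits this $\delta$-layer and would not close. A viable repair of your approach is to drop the $F_\lambda$ regularization entirely and run the semi-Galerkin directly with the singular $F$, invoking well-posedness of the nonlocal convective Cahn--Hilliard equation with logarithmic potential so that $|\phi^n|<1$ from the start; that is essentially what the paper does in Section~\ref{sec:str} for strong solutions, but it is not what you proposed.
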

\begin{remark}
The regularities stated in the above theorem are for the three-dimensional case. Analogous global weak existence also hold in the two-dimensional setting, where one would obtain slightly better regularity for the time derivatives:
\[
\pd_t(\rho(\phi) \bu) \in L^{2-s}(0,T;D(\bm{A}_S)^*), \quad \pd_t(\rho(\phi) \omega) \in L^{2-s}(0,T;H^{-1}(\Omega))
\]
for $0 < s \leq 1$. Note that the micro-rotation $\bo$ is a scalar $\omega$ in the two-dimensional model.
\end{remark}

We mainly follow the ideas in \cite{SFrigeri1} that employ a two-level approximation, which involves first approximating the singular potential $F$ by a sequence of regular potentials $F_{\epsilon} : \R \to \R$, and inserting additional terms into the nMAGG model. This approximate system reads in strong formulation as
\begin{subequations}\label{app:model1}
\begin{alignat}{2}
\div \bu  & = 0, \\[1ex]
\pd_t \phi + \bu \cdot \nabla \phi & = \div (m(\phi) \nabla \mu), \label{app:model:phi} \\[1ex]
\mu & = a\phi - K \star \phi  + F_{\epsilon}'(\phi) - \delta \Delta \phi, \label{app:model:mu} \\[1ex]
\pd_t (\rho \bu) + \div (\rho \bu \otimes \bu) & = - \nabla p + \mu \nabla \phi +  \div (2 \eta(\phi) \D \bu + 2 \eta_r(\phi) \W \bu) + \delta \bm{A}_S^3 \bu\label{app:model:u} \\[1ex]
\notag & \quad + 2 \curl(\eta_r(\phi) \bo) + \div (\rho'(\phi) m(\phi) \nabla \mu \otimes \bu )+ \frac{1}{2} R\bu  \\[1ex]
\pd_t (\rho \bo) + \div (\rho \bu \otimes  \bo ) & = \div (c_0(\phi) (\div \bo) \I + 2c_d(\phi) \D \bo +2 c_a(\phi) \W \bo)  \label{app:model:w}  \\[1ex]
\notag & \quad  + 2 \eta_r(\phi)(\curl \bv - 2 \bo) + \div (\rho'(\phi) m(\phi) \nabla \mu \otimes \bo) \\
\notag & \quad +  \frac{1}{2} R \bo + \delta \bm{A}_1^3 \bo,
\end{alignat}
\end{subequations}
where $R = - m(\phi) \nabla \mu \cdot \nabla \rho'(\phi)$. The additional terms involving $R$ in \eqref{app:model:u} and \eqref{app:model:w} arise from the preservation of the natural energy identity when $\phi$ is no longer confined to the physically relevant interval $[-1,1]$, see e.g.~\cite{CHLS} for further details. Notice that when $\phi \in [-1,1]$, we obtain $\rho'(\phi) = \frac{\overline{\rho}_1 - \overline{\rho}_2}{2}$ is a constant, and thus $R = 0$.  In order to deal with this nonlinear term, we introduce regularisation terms $-\delta \Delta \phi$ in \eqref{app:model:mu}, $\delta \bm{A}_S^3 \bu$ in \eqref{app:model:u} and $\delta \bm{A}_1^3 \bo$ in \eqref{app:model:w}.

If $\Omega$ is a bounded domain, then due to the presence of $\Delta \phi$ in \eqref{app:model:mu} we also need to consider a boundary condition for $\phi$, which we take as the no-flux boundary condition $\pdnu \phi = 0$ on $\pd \Omega$, in addition to the no-slip $\bu = \bm{0}$, no-spin $\bo = \bm{0}$ and no-flux $m(\phi)\pdnu \mu = 0$ boundary conditions.

We further consider a sequence of functions $\phi_{0,\delta} \in H^2_n(\Omega)$ defined as $\phi_{0,\delta} := (\mathbb{I} + \sqrt{\delta} B)^{-1} \phi_0$, where we recall $B = - \Delta + \mathbb{I}$.  Then, we furnish the initial data $(\bu_0, \bo_0, \phi_{0,\delta})$ to the approximate system \eqref{app:model1}.  

The proof of Theorem \ref{thm:nMAGG:weaksol} proceeds as follows:
\begin{itemize}
\item Step 1: Establish the existence of a global weak solution $(\bu_{\epsilon,\delta}, \bo_{\epsilon,\delta}, \phi_{\epsilon,\delta}, \mu_{\epsilon,\delta})$ to \eqref{app:model1}.
\item Step 2: Pass to the limit $\epsilon \to 0$ to obtain the singular potential $F$, and a family of functions $(\bu_\delta, \bo_\delta, \phi_\delta, \mu_\delta)_{\delta>0}$ such that $|\phi_\delta| < 1$ a.e.~in $Q$. Consequently, the terms involving $R = - m(\phi) \nabla \mu \cdot \nabla \rho'(\phi)$ in \eqref{app:model:u} and \eqref{app:model:w} vanish.
\item Step 3: Pass to the limit $\delta \to 0$ to obtain a weak solution to the nMAGG model \eqref{nonl:model:equ}.
\end{itemize}

Let us remark that since the equation for the micro-rotation $\bo$ is structurally similar to the equation for the velocity $\bu$, both variables share similar regularities in space and in time. Along with the fact that $\bo$ does not appear explicitly in the convective Cahn--Hilliard component for $(\phi, \mu)$, the arguments in Steps 2 and 3 are almost identical to those in \cite{SFrigeri1} for the nAGG model, while Step 1 requires some minor modifications that are displayed below.  We focus only on the three-dimensional case where $\Omega$ is a bounded domain, as the arguments for the corresponding two-dimensional setting and for the case where $\Omega = \mathbb{T}^d$ are similar.

\begin{remark}
One possibility to have micro-rotational influence of the convective Cahn--Hilliard component for $(\phi, \mu)$ is to consider a dependence on the micro-rotation magnitude in the mobility function, i.e., $m = m(\phi, |\bo|)$. The analysis presented below can be adapted to this setting with minor modifications in light of the boundedness and positivity assumption \eqref{gen:ass3} on $m$ and the strong (hence almost everywhere) convergence involving approximations of $\bo$.
\end{remark}

To establish the existence of weak solutions for the approximate model \eqref{app:model1} it is sufficient to impose the following assumptions of $F_{\epsilon}$ and on the density function $\rho$.

\begin{enumerate}[label=$(\mathrm{D \arabic*})$, ref = $\mathrm{D \arabic*}$]
\item \label{ass:rho:approx} The mass density function $\rho \in C^2(\R)$ is bounded from below by a positive constant $\rho_0$ and satisfies $\rho, \rho', \rho'' \in L^\infty(\R)$, along with 
\[
\rho(s) = \frac{\overline{\rho}_1 - \overline{\rho}_2}{2} s + \frac{\overline{\rho}_1 + \overline{\rho}_2}{2} \quad \text{ for } s \in [-1,1].
\]
\item \label{reg:pot:ass1} The approximate potential function $F_{\epsilon} \in C^{4,1}(\R)$ and there exist constants $\widehat{c}_0 > 0$, $\widehat{c}_1 > 0$, $\widehat{c}_2 > 0$, $\widehat{c}_3 > 0$ and $\widehat{c}_4 \geq 0$, as well as exponents $q \geq 3$ and $r \in (1,2]$ such that 
\begin{align*}
F_{\epsilon}''(s) + a(x) \geq \widehat{c}_0 & \quad \forall s \in \R, \quad \text{ for a.e.~} x \in \Omega, \\
F_{\epsilon}''(s) + a(x) \geq \widehat{c}_1 |s|^{q-2} - \widehat{c}_2 & \quad \forall s \in \R, \quad \text{ for a.e.~} x \in \Omega, \\
|F_{\epsilon}'(s)|^r \leq \widehat{c}_3 |F_{\epsilon}(s)| + \widehat{c}_4 & \quad \forall s \in \R.
\end{align*}
\end{enumerate}
Note that the last condition of \eqref{reg:pot:ass1} implies that $F_{\epsilon}$ has polynomial growth of order $r' = \frac{r}{r-1} \in [2,\infty)$. We refer to \eqref{polyapprox:F} below for a possible choice of $F_{\epsilon}$ that fulfills \eqref{reg:pot:ass1}. We further collect some useful preliminary results.

\begin{lem}[Lemma 2 of \cite{SFrigeri1}]\label{lem:Cw}
Let $X$ and $Y$ be two Banach spaces such that $Y \hookrightarrow X$ and $X^* \hookrightarrow Y^*$ densely. Then, $L^\infty(0,T;Y) \cap C^0([0,T];X) \hookrightarrow C_w([0,T];Y)$.
\end{lem}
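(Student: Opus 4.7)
The plan is to proceed in two steps: first promote the essential bound in $Y$ to a pointwise bound, then leverage the density of $X^{*}$ in $Y^{*}$ to upgrade the strong continuity in $X$ to weak continuity in $Y$.

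In the first step, for $u \in L^{\infty}(0,T;Y) \cap C^{0}([0,T];X)$ set $M := \|u\|_{L^{\infty}(0,T;Y)}$ and let $E \subset [0,T]$ be the null set on which the essential bound fails. I would fix $t_{0} \in E$, pick a sequence $t_{n} \to t_{0}$ lying in $[0,T]\setminus E$, and note that $\{u(t_{n})\}$ is bounded in $Y$. Viewing each $u(t_{n})$ as an element of $Y^{**}$ via the canonical embedding, Banach--Alaoglu furnishes (after passing to a subsequence) a weak-$*$ limit $\xi \in Y^{**}$ with $\|\xi\|_{Y^{**}} \leq M$. The continuity of $u$ into $X$ gives $\langle \xi, g\rangle = \lim_{n \to \infty} \langle g, u(t_{n})\rangle_{X} = \langle g, u(t_{0})\rangle_{X}$ for every $g \in X^{*} \subset Y^{*}$, and density of $X^{*}$ in $Y^{*}$ determines $\xi$ uniquely. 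Since each $Y$ appearing in applications of this lemma is reflexive, $\xi$ corresponds to an element of $Y$ which, via the embedding $Y \hookrightarrow X$, must coincide with $u(t_{0})$. Hence $u(t_{0}) \in Y$ with $\|u(t_{0})\|_{Y} \leq M$ for every $t_{0} \in [0,T]$.

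In the second step, fix $f \in Y^{*}$ and $t_{0} \in [0,T]$, and for arbitrary $\varepsilon > 0$ use density to pick $g \in X^{*}$ with $\|f - g\|_{Y^{*}} < \varepsilon$. The splitting
\[
|\langle f, u(t) - u(t_{0})\rangle_{Y}| \leq \|f - g\|_{Y^{*}}\bigl(\|u(t)\|_{Y} + \|u(t_{0})\|_{Y}\bigr) + \|g\|_{X^{*}}\,\|u(t) - u(t_{0})\|_{X},
\]
together with the pointwise bound $M$ from Step 1 and continuity of $u$ into $X$, yields $\limsup_{t \to t_{0}} |\langle f, u(t) - u(t_{0})\rangle_{Y}| \leq 2M\varepsilon$. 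Sending $\varepsilon \to 0^{+}$ gives the desired weak continuity at $t_{0}$.

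The main obstacle is Step 1: pointwise definition of $u(t)$ as an element of $Y$ on the exceptional null set where the essential $L^{\infty}$-bound does not immediately apply. Without reflexivity of $Y$ one would have to exploit the density of $X^{*}$ in $Y^{*}$ more carefully to identify the weak-$*$ limit in $Y^{**}$ with a genuine element of the embedded copy of $Y$; fortunately, all relevant choices of $Y$ in this paper (such as $\HH_{\sigma}$, $\HH$, $H^{1}(\Omega)$, and $L^{p}(\Omega)$ with $1 < p < \infty$) are reflexive, making the identification immediate.
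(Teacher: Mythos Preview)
The paper does not supply a proof of this lemma; it is simply quoted as Lemma~2 of \cite{SFrigeri1} without argument, so there is no in-paper proof to compare against.

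Your argument is the standard one and is correct under the reflexivity hypothesis on $Y$ that you explicitly invoke. Two minor remarks. First, Banach--Alaoglu only guarantees weak-$*$ convergence along a sub\emph{net} in general; to extract a sub\emph{sequence} you would need $Y^{*}$ separable. However, since you show the weak-$*$ cluster point $\xi$ is uniquely determined (via the density of $X^{*}$ in $Y^{*}$ and the strong convergence $u(t_{n})\to u(t_{0})$ in $X$), every subnet has the same limit and the full sequence converges, so this is not a genuine gap. Second, as you yourself observe, without reflexivity of $Y$ the identification of $\xi\in Y^{**}$ with an element of $Y$ is the delicate point; the lemma as stated in the paper does not list reflexivity among its hypotheses, but the result is typically stated and used (e.g.\ in Lions--Magenes, Temam, Strauss) with $Y$ reflexive, and all the spaces to which the paper applies the lemma are indeed reflexive, so your restriction is appropriate for the intended use.
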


\begin{lem}[Lemma 3 of \cite{SFrigeri1}]\label{lem:Energyineq}
Let $\mathcal{E} : [0,T) \to \R$, $0 < T \leq \infty$, be a lower semicontinuous function and let $\mathcal{D} : (0,T) \to \R$ be an integrable function. Assume that 
\[
\mathcal{E}(0) w(0) + \int_0^T \mathcal{E}(r) w'(r) \, dr \geq \int_0^T \mathcal{D}(r) w(r) \, dr
\]
holds for all $w \in W^{1,1}(0,T)$ with $w(T) = 0$ and $w \geq 0$.  Then, 
\[
\mathcal{E}(t) + \int_s^t \mathcal{D}(r) \, dr \leq \mathcal{E}(s)
\]
for almost all $s \in [0,T)$, including $s = 0$, and for all $t \in [s,T)$.
\end{lem}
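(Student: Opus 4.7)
The hypothesis is a distributional version of the inequality $\mathcal{E}'(t) + \mathcal{D}(t) \leq 0$ on $(0,T)$ with initial value prescribed as $\mathcal{E}(0)$. My plan is to test the given weak inequality against a family of Lipschitz approximations of $\chi_{[s,t]}$ and pass to the limit, extracting the left endpoint by Lebesgue differentiation (which yields validity for almost every $s$) and the right endpoint by the lower semicontinuity of $\mathcal{E}$ (which yields validity for every $t$). Since $\mathcal{E}$ is lower semicontinuous on $[0,T)$ it is locally bounded below, and the hypothesis forces local integrability against compactly supported $w'$, so we may invoke Lebesgue's differentiation theorem on $\mathcal{E}$.

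For fixed $0 < s < t < T$ and $\epsilon > 0$ sufficiently small that $s - \epsilon > 0$ and $t + \epsilon < T$, the first step is to plug into the hypothesis the nonnegative piecewise linear hat function
\begin{equation*}
w_\epsilon(r) = \begin{cases}
0, & 0 \leq r \leq s - \epsilon, \\
\frac{r - (s - \epsilon)}{\epsilon}, & s - \epsilon \leq r \leq s, \\
1, & s \leq r \leq t, \\
1 - \frac{r - t}{\epsilon}, & t \leq r \leq t + \epsilon, \\
0, & t + \epsilon \leq r < T,
\end{cases}
\end{equation*}
which satisfies $w_\epsilon \in W^{1,1}(0,T)$, $w_\epsilon \geq 0$, $w_\epsilon(T) = w_\epsilon(0) = 0$ and $w'_\epsilon = \tfrac{1}{\epsilon}\chi_{[s-\epsilon,s]} - \tfrac{1}{\epsilon}\chi_{[t,t+\epsilon]}$. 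The hypothesis then specialises to
\begin{equation*}
\frac{1}{\epsilon}\int_{s-\epsilon}^{s} \mathcal{E}(r)\,dr \; - \; \frac{1}{\epsilon}\int_{t}^{t+\epsilon} \mathcal{E}(r)\,dr \; \geq \; \int_{0}^{T} \mathcal{D}(r)\, w_\epsilon(r)\,dr.
\end{equation*}
Next, I would pass to the limit $\epsilon \to 0^+$. Dominated convergence (using $|\mathcal{D}\, w_\epsilon| \leq |\mathcal{D}| \in L^1(0,T)$ and pointwise convergence $w_\epsilon \to \chi_{[s,t]}$) sends the right-hand side to $\int_s^t \mathcal{D}(r)\,dr$. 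The left-average term converges to $\mathcal{E}(s)$ at every Lebesgue point of $\mathcal{E}$, hence for almost every $s \in (0,T)$. Lower semicontinuity of $\mathcal{E}$ at $t$ gives $\liminf_{\epsilon \to 0^+} \tfrac{1}{\epsilon}\int_{t}^{t+\epsilon}\mathcal{E}(r)\,dr \geq \mathcal{E}(t)$. Taking $\limsup$ of the displayed inequality then produces $\mathcal{E}(s) - \mathcal{E}(t) \geq \int_s^t \mathcal{D}(r)\,dr$ for a.e. $s$ and all $t \in [s,T)$.

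For the case $s = 0$, I would repeat the argument with the one-sided test function that equals $1$ on $[0,t]$, decays linearly to $0$ on $[t, t+\epsilon]$ and vanishes thereafter, so $w_\epsilon(0) = 1$ and the boundary term $\mathcal{E}(0) w_\epsilon(0) = \mathcal{E}(0)$ directly substitutes for the missing left-endpoint average. The hard part, and really the only subtlety, will be the asymmetric use of the two convergence tools at the two endpoints: the left endpoint must be handled by Lebesgue differentiation (whence the a.e. $s$ restriction), while the right endpoint must be handled by lower semicontinuity (whence validity for every $t$). A symmetric use of Lebesgue differentiation at both endpoints would collapse the conclusion to an a.e. $s$, a.e. $t$ statement and would discard precisely the structural information carried by the lower semicontinuity assumption on $\mathcal{E}$.
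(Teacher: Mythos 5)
Your proof is correct, and it is essentially the standard argument for this statement: the paper itself does not prove the lemma but quotes it from Frigeri's work, where the proof proceeds exactly as you do, testing with piecewise-linear approximations of $\chi_{[s,t]}$ (respectively the one-sided hat anchored at $0$ for $s=0$), using Lebesgue differentiation at the left endpoint and lower semicontinuity at the right one. The only point worth spelling out slightly more is your claim that $\mathcal{E}\in L^1_{\loc}([0,T))$, which is needed for the Lebesgue-point argument; it does follow as you indicate, since lower semicontinuity gives local lower bounds while testing with $w$ equal to $1$ on $[0,b]$ and decaying linearly on $[b,b+\eta]$ bounds $\int_b^{b+\eta}\mathcal{E}$ from above, so the averages are finite and a.e.\ point is a Lebesgue point.
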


\begin{lem}\label{lem:exist:weaksol}
Suppose that \eqref{gen:ass1}-\eqref{gen:ass3} and \eqref{ass:rho:approx}-\eqref{reg:pot:ass1} hold. Let $\bu_{0} \in \HH_\sigma$, $\bo_{0}\in\HH$ and $\phi_{0,\delta} \in H^1(\Omega)$ such that $F_{\epsilon}(\phi_{0,\delta}) \in L^{1}(\Omega)$. Then, for any $T > 0$ there exists a weak solution $(\bu, \bo, \phi, \mu)$ to \eqref{app:model1} corresponding to initial data $(\bu_0, \bo_0, \phi_{0,\delta})$ such that the following properties are satisfied:
\begin{itemize}
\item Regularity
\begin{equation}\label{Gal:regularity}
\begin{aligned}
\bu & \in C_w([0,T];\HH_\sigma) \cap L^2(0,T;D(\bm{A}_S^{3/2})) \text{ with } \pd_t (\rho \bu)  \in L^{\frac{30}{29}}(0,T;D(\bm{A}_S^{3/2})^*), \\
\bo & \in C_w([0,T];\HH) \cap L^2(0,T;D(\bm{A}_1^{3/2})) \text{ with } \pd_t (\rho \bo) \in L^{\frac{30}{29}}(0,T;D(\bm{A}_1^{3/2})^*), \\
\phi &\in L^\infty(0,T;H^1(\Omega)) \cap L^2(0,T;H^2_n(\Omega)) \text{ with } \pd_t \phi  \in L^2(0,T;H^1(\Omega)^*), \\
\mu & \in L^2(0,T;H^1(\Omega)).
\end{aligned}
\end{equation}
\item Equations
\begin{subequations}\label{appsys:weakform}
\begin{alignat}{2}
0 & = \langle \pd_t (\rho \bu), \bm{v} \rangle_{D(\bm{A}_S^{3/2})}  - (\rho \bu \otimes \bu), \nabla \bm{v}) + (2 \eta(\phi) \D \bu, \D \bm{v})  \\
\notag & \quad + (2 \eta_r(\phi) \W \bu, \W \bm{v}) + \delta (\bm{A}_S^{3/2} \bu, \bm{A}_S^{3/2} \bm{v}) - (\bm{J} \otimes \bu, \nabla \bm{v}) \\
\notag & \quad + (\phi \nabla \mu, \bm{v}) - (2 \curl(\eta_r(\phi) \bo), \bm{v}) - \tfrac{1}{2}(R\bu, \bm{v}), \\[1ex]
0 & = \langle \pd_t (\rho \bo), \bm{z} \rangle_{D(\bm{A}_1^{3/2})} - (\rho \bu \otimes \bo, \nabla \bm{z}) + (c_0(\phi) \div \bo, \div \bm{z}) \\
\notag & \quad + (2 c_d(\phi) \D \bo, \D \bm{z}) + (2 c_a(\phi) \W \bo, \W \bm{z}) - (\bm{J} \otimes \bo, \nabla \bm{z}) \\
\notag & \quad - (2 \eta_r(\phi)(\curl \bu - 2 \bo), \bm{z}) - \tfrac{1}{2}(R \bo, \bm{z}) + \delta (\bm{A}_1^{3/2} \bo, \bm{A}_1^{3/2} \bm{z}), \\[1ex]
0 & = \langle \pd_t \phi , \psi \rangle_{H^1(\Omega)} + (m(\phi) \nabla \mu, \nabla \psi) - (\phi \bu, \nabla \psi), 
\end{alignat}
\end{subequations}
holding for a.e.~$t \in (0,T)$ and for all $\bm{v} \in D(\bm{A}_S^{3/2})$, $\bm{z} \in D(\bm{A}_1^{3/2})$, $\psi \in H^1(\Omega)$, along with
\begin{equation*}
\begin{alignedat}{2}
\mu & = a \phi - K \star \phi + F_{\epsilon}'(\phi) - \delta \Delta \phi && \quad \text{ a.e.~in } Q, \\
\bm{J} & = - \rho'(\phi) m(\phi) \nabla \mu && \quad \text{ a.e.~in } Q, \\
R & = - \rho''(\phi) m(\phi) \nabla \mu \cdot \nabla \phi && \quad \text{ a.e.~in } Q.
\end{alignedat}
\end{equation*}
\item Energy inequality
\begin{equation}\label{ene.ineq:weaksol}
\begin{aligned}
&  \int_\Omega \frac{\rho(\phi(t))}{2} (|\bu(t)|^2 + |\bo(t)|^2) + F_{\epsilon}(\phi(t)) + \frac{\delta}{2} |\nabla \phi(t)|^2 \, dx + e(\phi(t))\\ 
& \qquad + \int_{0}^{t} \int_\Omega m(\phi) |\nabla \mu|^2 + 2 \eta(\phi) |\D \bu|^2 + c_0(\phi) |\div \bo|^2 \, dx \, d\tau \\
& \qquad + \int_0^t \int_\Omega 2 c_d(\phi) |\D \bo|^2 + 2 c_a(\phi) |\W \bo|^2 + 4 \eta_r(\phi) |\tfrac{1}{2} \curl \bu - \bo |^2 \, dx \, d\tau \\
& \qquad + \delta \int_0^t \| \bm{A}_S^{3/2} \bu \|^2 + \| \bm{A}_1^{3/2} \bo \|^2 \, d \tau \\
& \quad \leq \int_\Omega \frac{\rho(\phi_{0,\delta})}{2} (|\bu_0|^2 + |\bo_0|^2) + F_{\epsilon}(\phi_{0,\delta}) + \frac{\delta}{2} |\nabla \phi_{0,\delta}|^2 \, dx + e(\phi_{0,\delta})
\end{aligned}
\end{equation}
holds for almost all $t \in (0,T)$.
\item Initial conditions
\[
(\bu, \bo, \phi) \vert_{t=0} = (\bu_0, \bo_0, \phi_{0,\delta}).
\]
\end{itemize}
\end{lem}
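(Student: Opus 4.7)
The plan is to implement a semi-Galerkin scheme in the spirit of \cite{SFrigeri1}, augmented to handle the micro-rotation equation. I project the velocity $\bu$ and the micro-rotation $\bo$ onto finite-dimensional subspaces spanned by the eigenfunctions of $\bm{A}_S$ and $\bm{A}_1$, respectively, while leaving the phase field component $(\phi, \mu)$ at the PDE level. With uniform energy estimates that reproduce \eqref{ene.ineq:weaksol} at the discrete level, I can then pass to the limit in the Galerkin parameter using Aubin--Lions compactness, with the regularization terms $\delta \bm{A}_S^3 \bu$, $\delta \bm{A}_1^3 \bo$ and $-\delta \Delta \phi$ providing enough additional regularity to control the nonlinear terms $\rho'(\phi) m(\phi) \nabla \mu \otimes \bu$, $\rho'(\phi) m(\phi) \nabla \mu \otimes \bo$ and $R$.

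First, let $\{\bm{w}_i\}_{i \geq 1}$ and $\{\bm{z}_i\}_{i \geq 1}$ denote the eigenfunctions of $\bm{A}_S$ and $\bm{A}_1$, set $\WW_n := \mathrm{span}\{\bm{w}_1,\ldots,\bm{w}_n\}$, $\ZZ_n := \mathrm{span}\{\bm{z}_1,\ldots,\bm{z}_n\}$. Given any $\bu_n \in C^0([0,T_*]; \WW_n)$, the regularized convective nonlocal Cahn--Hilliard system
\begin{align*}
\pd_t \phi_n + \bu_n \cdot \nabla \phi_n = \div(m(\phi_n) \nabla \mu_n), \quad \mu_n = a \phi_n - K \star \phi_n + F_{\epsilon}'(\phi_n) - \delta \Delta \phi_n,
\end{align*}
with $\phi_n(0) = \phi_{0,\delta}$ and $\pdnu \phi_n = \pdnu \mu_n = 0$, is a classical fourth-order parabolic problem (since $F_\epsilon \in C^{4,1}$ has polynomial growth by \eqref{reg:pot:ass1} and $-\delta \Delta \phi_n$ dominates the nonlocal term) that admits a unique strong solution with the regularity stated in \eqref{Gal:regularity}. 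Writing $\bu_n = \sum_i \alpha_i(t) \bm{w}_i$, $\bo_n = \sum_i \beta_i(t) \bm{z}_i$ and substituting $(\phi_n,\mu_n)$ into the Galerkin projection of \eqref{app:model:u}--\eqref{app:model:w} yields a nonlinear ODE system for $(\alpha_i, \beta_i)_{i \leq n}$ whose local solvability on $[0,T_*]$ is obtained via a Schauder fixed-point argument in $C^0([0,T_*]; \WW_n \times \ZZ_n)$, exploiting the boundedness of $\rho, \rho', \rho''$ and of the viscosity coefficients.

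Testing the Galerkin momentum and moment-of-momentum equations with $\bu_n$ and $\bo_n$, the Cahn--Hilliard equation with $\mu_n$, and the definition of $\mu_n$ with $\pd_t \phi_n$, I can exploit the cancellations provided by the added terms $\tfrac{1}{2} R_n \bu_n$, $\tfrac{1}{2} R_n \bo_n$ together with the relative fluxes $\div(\rho'(\phi_n) m(\phi_n) \nabla \mu_n \otimes \bu_n)$ and $\div(\rho'(\phi_n) m(\phi_n) \nabla \mu_n \otimes \bo_n)$, which, as in \cite{CHLS}, yield the discrete counterpart of \eqref{ene.ineq:weaksol} as an equality. This produces uniform bounds in the classes \eqref{Gal:regularity} independent of $n$ and $T_*$, allowing extension to $[0,T]$. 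Passing to the limit $n \to \infty$, weak-$\ast$ limits exist by Banach--Alaoglu, while strong convergences $\phi_n \to \phi$ in $C^0([0,T]; L^2)$, $\bu_n \to \bu$ in $L^2(0,T; \HH_\sigma)$ and $\bo_n \to \bo$ in $L^2(0,T; \HH)$ follow from Aubin--Lions, using the bounds in $L^2(0,T; H^2_n)$, $L^2(0,T; D(\bm{A}_S^{3/2}))$ and $L^2(0,T; D(\bm{A}_1^{3/2}))$ combined with suitable time-derivative estimates. Pointwise a.e.\ limits then allow identification of the nonlinear terms $\rho(\phi_n) \bu_n \otimes \bu_n$, $\rho(\phi_n) \bu_n \otimes \bo_n$, $F_\epsilon'(\phi_n)$ and $R_n$, and the energy inequality \eqref{ene.ineq:weaksol} is recovered from the discrete identity by weak lower semicontinuity of the dissipation integrals together with Lemma \ref{lem:Energyineq}.

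The main technical obstacle will be the derivation of the time-derivative bounds $\pd_t(\rho(\phi_n) \bu_n) \in L^{30/29}(0,T; D(\bm{A}_S^{3/2})^*)$ and $\pd_t(\rho(\phi_n) \bo_n) \in L^{30/29}(0,T; D(\bm{A}_1^{3/2})^*)$ needed for the compactness step. The worst contributions come from the convective terms $\rho(\phi_n) \bu_n \otimes \bu_n$, $\rho(\phi_n) \bu_n \otimes \bo_n$, the capillary term $\phi_n \nabla \mu_n$, and $R_n \bu_n$, $R_n \bo_n$. The regularizations $\delta \bm{A}_S^3 \bu$ and $\delta \bm{A}_1^3 \bo$ lift $\bu_n, \bo_n$ to $L^2(0,T;H^3)$, so that test functions in $D(\bm{A}_S^{3/2}), D(\bm{A}_1^{3/2}) \hookrightarrow W^{1,\infty}$ can absorb the worst terms, with the specific exponent $30/29$ arising from interpolation between $L^\infty(0,T; L^2)$ and $L^2(0,T; H^3)$ in three dimensions combined with the $L^2(0,T;H^2)$ regularity of $\phi_n$. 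Once these time-derivative bounds are established, the remaining arguments are a straightforward adaptation of those in \cite{SFrigeri1} to the micropolar setting, the coupling with $\bo_n$ being accommodated symmetrically to that of $\bu_n$.
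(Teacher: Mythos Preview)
Your approach is a valid alternative but differs from the paper's in one key structural choice: you propose a \emph{semi-Galerkin} scheme (discretize $\bu,\bo$ only, keep $(\phi,\mu)$ at the PDE level, close via a Schauder fixed point), whereas the paper follows \cite{SFrigeri1} with a \emph{full Galerkin} discretization of all four unknowns, projecting $\phi_n,\mu_n$ onto the span of eigenfunctions of $B=-\Delta+\mathbb{I}$. The full-Galerkin route yields a genuine ODE system solvable directly by Peano's theorem and avoids the fixed-point step, but the projection $P_n$ in the discrete Cahn--Hilliard equation spoils the exact cancellations needed for the energy identity; the paper therefore inserts additional correction terms of the form $\tfrac12\big(\rho_n'(P_n(\bu_n\!\cdot\!\nabla\phi_n)-\bu_n\!\cdot\!\nabla\phi_n)\bu_n,\bm X_k\big)$ and the analogous one involving $\div(m(\phi_n)\nabla\mu_n)-P_n(\div(m(\phi_n)\nabla\mu_n))$ into the discrete momentum and micro-rotation equations, and must then verify that these extra terms vanish in the limit $n\to\infty$. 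Your semi-Galerkin avoids such corrections because the continuity equation for $\rho(\phi_n)$ holds exactly at the approximate level, but you pay with the fixed-point machinery: the clean energy identity you invoke only holds \emph{at} the fixed point, so you must first derive a priori bounds on the iteration map $\bm U\mapsto\bu^n$ (as the paper does in Section~\ref{sec:str} for strong solutions) before appealing to it. Both routes lead to the same $L^{30/29}$ time-derivative bound you identify, driven by the product $\|\nabla\phi_n\|_{L^{10/3}(Q)}\|\nabla\mu_n\|_{L^2(Q)}\|\bu_n\|_{L^6(Q)}$.
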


\begin{proof}
We consider a Galerkin approximation with the family $\{\bm{X}_j\}_{j \in \N}$ of eigenfunctions of the operator $\bm{A}_S$, the family $\{\bm{Y}_j\}_{j \in \N}$ of eigenfunctions of the operator $\bm{A}_1$, and the family $\{z_j\}_{j\in \N}$ of eigenfunctions of the operator $B$. Setting $\mathbb{V}_{u,n} := \mathrm{span}\{\bm{X}_1, \dots, \bm{X}_n\}$, $\mathbb{V}_{w,n} := \mathrm{span}\{\bm{Y}_1, \dots, \bm{Y}_n\}$ and $\mathbb{V}_n := \mathrm{span}\{z_1, \dots, z_n\}$ as the associated finite dimensional subspaces with orthogonal projection operators $P_{u,n}$, $P_{w,n}$ and $P_n$, respectively, we seek Galerkin solutions of the form
\begin{align*}
\bu_n(t) & := \sum_{j=1}^n a_j^n(t) \bm{X}_j, \quad \bo_n(t) := \sum_{j=1}^n b_j^n(t) \bm{Y}_j, \\
\phi_n(t) &:= \sum_{j=1}^n c_j^n(t) z_j, \quad \mu_n(t) := \sum_{j=1}^n d_j^n(t) z_j,
\end{align*}
that solve the approximating problem
\begin{subequations}\label{Galerkin}
\begin{alignat}{2}
\label{Gal:u} 0 & = (\pd_t(\rho_n \bu_n), \bm{X}_k) - (\rho_n \bu_n \otimes \bu_n, \nabla \bm{X}_k) + (2 \eta_n \D \bu_n, \D \bm{X}_k) + (2 \eta_{r,n} \W \bu_n, \W \bm{X}_k) \\
\notag & \quad + \delta (\bm{A}_S^{3/2} \bu_n, \bm{A}_S^{3/2} \bm{X}_k) - ((\bm{J}_n \otimes \bu_n, \nabla \bm{X}_k) + (\phi_n \nabla \mu_n, \bm{X}_k) - (2 \curl(\eta_{r,n} \bo_n), \bm{X}_k) \\
\notag & \quad + \tfrac{1}{2}(\rho_n'' m(\phi_n) (\nabla \phi_n \cdot \nabla \mu_n)\bu_n, \bm{X}_k) + \tfrac{1}{2}(\rho_n' (P_n(\bu_n \cdot \nabla \phi_n) - \bu_n \cdot \nabla \phi_n) \bu_n, \bm{X}_k) \\
\notag & \quad + \tfrac{1}{2} (\rho_n' (\div (m(\phi_n) \nabla \mu_n) - P_n(\div (m(\phi_n) \nabla \mu_n))) \bu_n, \bm{X}_k), \\[1ex]
\label{Gal:w} 0 & = (\pd_t (\rho_n \bo_n), \bm{Y}_k) - (\rho_n \bu_n \otimes \bo_n, \nabla \bm{Y}_k)  + (2 c_{d,n} \D \bo_n, \D \bm{Y}_k)  + (2 c_{a,n} \W \bo_n, \W \bm{Y}_k) \\
\notag & \quad + (c_{0,n} \div \bo_n, \div \bm{Y}_k) - (\bm{J}_n \otimes \bo_n, \nabla \bm{Y}_k) - (2 \eta_{r,n}(\curl \bu_n - 2 \bo_n), \bm{Y}_k)  \\
\notag & \quad  + \tfrac{1}{2}(\rho_n'' m(\phi_n)(\nabla \phi_n \cdot \nabla \mu_n) \bo_n, \bm{Y}_k) + \tfrac{1}{2}(\rho_n' (P_n(\bu_n \cdot \nabla \phi_n) - \bu_n \cdot \nabla \phi_n) \bo_n, \bm{X}_k) \\
\notag & \quad +  \tfrac{1}{2} (\rho_n' (\div (m(\phi_n) \nabla \mu_n) - P_n(\div (m(\phi_n) \nabla \mu_n))) \bo_n, \bm{Y}_k) + \delta (\bm{A}_1^{3/2} \bo_n, \bm{A}_1^{3/2} \bm{Y}_k) , \\[1ex]
\label{Gal:phi} 0 & = (\pd_t \phi_n , z_k) + (m(\phi_n) \nabla \mu_n, \nabla z_k) - (\phi_n \bu_n, \nabla z_k),  \\[1ex]
\label{Gal:mu} \mu_n & = P_n(a \phi_n - K \star \phi_n + F_{\epsilon}'(\phi_n) - \delta \Delta \phi_n), \\[1ex]
\bm{J}_n & = - \rho_n' m(\phi_n) \nabla \mu_n,
\end{alignat}
\end{subequations}
where we used the notation $f_n = f(\phi_n)$ for $f \in \{\rho, \rho', \rho'', \eta, \eta_r, c_0, c_d, c_a\}$, and set $\bu_n(0) = P_{u,n}( \bu_0)$, $\bw_n(0) = P_{w,n}(\bo_0)$ and $\phi_n(0) = P_n(\phi_{0,\delta})$.  The additional terms involving $\bu_n \cdot \nabla \phi_n$ and $\div (m(\phi_n) \nabla \mu_n)$ in \eqref{Gal:u} and \eqref{Gal:w} are introduced to ensure the validity of an energy identity. 

The system \eqref{Galerkin} can be expressed as a system of ordinary differential equations with right-hand sides depending continuously on the unknowns $\{a_j^n, b_j^n, c_j^n\}$ (note that \eqref{Gal:mu} allows us to express $\{d_j^n\}$ as functions of $\{c_j^n\}$). Hence, by the theory of ordinary differential systems, there exists a time $T_n \in (0,+\infty]$ such that this system admits solutions $\bm{a}^n := (a_1^n, \dots, a_n^n)$, $\bm{b}^n := (b_1^n, \dots, b_n^n)$, $\bm{c}^n := (c_1^n, \dots, c_n^n)$ and $\bm{d}^n := (d_1^n, \dots, d_n^n)$ on $[0,T_n)$ with $\bm{a}^n, \bm{b}^n, \bm{c}^n, \bm{d}^n \in C^1([0,T_n);\R^n)$.

Next, we multiply \eqref{Gal:u} with $a_k^n$, \eqref{Gal:w} with $b_k^n$, \eqref{Gal:phi} with $d_k^n$, summing over $k = 1, \dots, n$ and employing the following identities for $\bm{g}_n \in \{\bu_n,\bo_n\}$:
\begin{align*}
 (\pd_t (\rho_n \bm{g}_n), \bm{g}_n) & = \frac{d}{dt} \int_\Omega \frac{\rho_n}{2} |\bm{g}_n|^2 \, dx + \int_\Omega \pd_t \rho_n |\bm{g}_n|^2 \, dx \\
& = \frac{d}{dt} \int_\Omega \frac{\rho_n}{2} |\bm{g}_n|^2 \, dx + \frac{1}{2} \int_\Omega \rho_n' |\bm{g}_n|^2 P_n(\div (m(\phi_n) \nabla \mu_n) - \bu_n \cdot \nabla \phi_n) \, dx,
\end{align*}
which comes from rewriting \eqref{Gal:phi} as $\pd_t \phi_n = P_n(\div (m(\phi_n) \nabla \mu_n) - \nabla \phi_n \cdot \bu_n)$, and
\begin{align*}
 - (\rho_n \bu_n \otimes \bm{g}_n, \nabla \bm{g}_n)  = \int_\Omega (\nabla \rho_n \cdot \bu_n) \frac{|\bm{g}_n|^2}{2} \, dx = \frac{1}{2} \int_\Omega \rho_n' (\bu_n \cdot \nabla \phi_n) |\bm{g}_n|^2 \, dx,
\end{align*}
as well as
\begin{align*}
& - (\bm{J}_n \otimes \bm{g}_n, \nabla \bm{g}_n) = \int_\Omega |g_n|^2 \div \bm{J}_n + \frac{1}{2} \bm{J}_n \cdot \nabla |\bm{g}_n|^2 \, dx \\
& \quad = \int_\Omega \frac{1}{2} |\bm{g}_n|^2 \div \bm{J}_n \, dx = - \frac{1}{2} \int_\Omega \div (\rho_n'm(\phi_n) \nabla \mu_n) |\bm{g}_n|^2 \, dx \\
& \quad = - \frac{1}{2} \int_\Omega \rho_n'' m(\phi_n) (\nabla \phi_n \cdot \nabla \mu_n) |\bm{g}_n|^2 + \rho_n' \div (m(\phi_n) \nabla \mu_n) |\bm{g}_n|^2 \, dx.
\end{align*}
After some calculations we obtain
\begin{equation}\label{Gal:energyId}
\begin{aligned}
& \frac{d}{dt} \int_\Omega \frac{\rho_n}{2}\Big (|\bu_n|^2 + |\bo_n|^2\Big) + \frac{a}{2}|\phi_n|^2 + F_{\epsilon}(\phi_n) + \frac{\delta}{2}|\nabla \phi_n|^2 - \frac{1}{2} \phi_n (K \star \phi_n) \, dx \\
& \qquad + \int_\Omega m(\phi_n)|\nabla \mu_n|^2 + 2 \eta_n |\D \bu_n|^2 + 4 \eta_{r,n} |\tfrac{1}{2} \curl \bu_n - \bo_n|^2 \, dx \\
& \qquad + \int_\Omega c_{0,n}|\div \bo_n|^2 + 2 c_{d,n} |\D \bo_n|^2 + 2 c_{a,n}|\W \bo_n|^2 \, dx \\
& \qquad + \delta \| \bm{A}_S^{3/2} \bu_n \|^2 + \delta \| \bm{A}_1^{3/2} \bo_n \|^2 \\
& \quad = 0.
\end{aligned}
\end{equation}
We used the relation \eqref{Wv:curl} to combine the terms involving $\W \bu_n : \W \bu_n$, $\curl \bu_n \cdot \bo_n$ and $|\bo_n|^2$ to obtain a dissipative term $|\frac{1}{2}\curl \bu_n -  \bo_n|^2$. For the two-dimensional setting, the relation \eqref{2D:curlu:w} is used instead.

For the remainder of the proof we denote positive constants independent of $n$ and $\delta$ by the symbol $C$. Any dependence of the constants on $\delta$ will be indicated expressed as $C_\delta$.

As $\phi_{0,\delta} \in H^2_n(\Omega)$, we have $P_n(\phi_{0,\delta}) \to \phi_{0,\delta}$ in $L^\infty(\Omega)$. Using also $\| P_{u,n}(\bu_0) \| \leq \| \bu_0 \|$, $\| P_{w,n}(\bo) \| \leq \| \bo_0 \|$ and $\| P_n(\phi_{0,\delta}) \|_{H^1} \leq C \| \phi_{0,\delta} \|_{H^1}$, we find from \eqref{Gal:energyId} that there exists a positive constant $C$ independent of $n$ such that
\begin{align*}
& \| \bu_n \|_{L^\infty(0,T;\HH_{\sigma})\cap L^2(0,T;\HH^1_{\sigma})} + \delta^{1/2}  \|  \bu_n \|_{L^2(0,T;D(\bm{A}_S^{3/2}))} \\
& \quad + \| \bo_n \|_{L^\infty(0,T;\HH) \cap L^2(0,T;\HH^1)} + \delta^{1/2} \| \bo_n \|_{L^2(0,T;D(\bm{A}_1^{3/2}))} \\
& \quad + \|\nabla \mu_n \|_{L^2(Q)} + \| F_{\epsilon}(\phi_n) \|_{L^\infty(0,T;L^1(\Omega))} + \delta^{1/2} \| \phi_n \|_{L^\infty(0,T;H^1(\Omega))} \leq C.
\end{align*}
From the polynomial growth of $F_{\epsilon}$ we also infer that 
\[
\| \phi_n \|_{L^\infty(0,T;L^p)} \leq C \quad \forall p \in [2,\infty).
\]
As these uniform estimates are valid in the time interval $[0,T]$ for arbitrary $T > 0$, it holds that the existence time $T_n$ can be extended to a fixed but arbitrary $T>0$. Next, notice that $\Delta \phi_n \in \mathbb{V}_n$ and so upon testing \eqref{Gal:mu} with $- \Delta \phi_n$ yields
\begin{align*}
(\nabla \mu_n, \nabla \phi_n) & = (\nabla \phi_n, (a + F_{\epsilon}''(\phi_n)) \nabla \phi_n + \phi_n \nabla a - \nabla K \star \phi_n) + \delta \| \Delta \phi_n \|^2 \\
& \geq \widehat{c}_0 \| \nabla \phi_n \|^2 - 2\| \nabla K \|_{L^1} \| \nabla \phi_n \| \| \phi_n \| + \delta \| \Delta \phi_n \|^2 \\
& \geq \frac{\widehat{c}_0}{2} \| \nabla \phi_n \|^2 + \delta \| \Delta \phi_n \|^2 - C\| \phi_n \|^2.
\end{align*}
Using H\"older's and Young's inequalities on the left-hand side, we infer from elliptic regularity that
\[
\| \phi_n \|_{L^2(0,T;H^1)} + \delta^{1/2} \| \phi_n \|_{L^2(0,T;H^2)} \leq C.
\]
Then, observing
\[
\Big | \int_\Omega \mu_n \, dx \Big | \leq \| F_{\epsilon}'(\phi_n) \|_{L^1} \leq \widehat{c}_3 \| F_{\epsilon}(\phi_n) \|_{L^1} + \widehat{c}_{4},
\]
we see that $\overline{\mu}_n$ is bounded in $L^\infty(0,T)$, and thus by the Poincar\'e inequality
\[
\| \mu_n \|_{L^2(0,T;H^1(\Omega))} \leq C.
\]
Next, using the uniform boundedness of $\phi_n$ in $L^\infty(0,T;L^p(\Omega))$ for $p \geq 3$ and of $\bu_n$ in $L^2(0,T;\HH^1_{\sigma})$, for an arbitrary $\psi \in H^1(\Omega)$, we have
\begin{align*}
|(\pd_t \phi_n, \psi)| & = |(\pd_t \phi_n, P_n(\psi))| = |-(m(\phi_n) \nabla \mu_n, \nabla P_n(\psi)) + (\phi_n \bu_n, \nabla P_n(\psi))| \\
& \leq C (\| \nabla \mu_n \| + \| \phi_n \|_{L^3} \| \bu_n \|_{L^6}) \| \nabla \psi \| \\
& \leq C (\| \nabla \mu_n \| + \| \nabla \bu_n \|) \| \psi \|_{H^1}.
\end{align*}
This yields
\[
\| \pd_t \phi_n \|_{L^2(0,T;H^1(\Omega)^*)} \leq C.
\]
Invoking the Gagliardo--Nirenburg inequality for three dimensions, we infer the following interpolation embeddings valid for $s,r \in (4, \infty]$
\begin{align}
\notag & L^\infty(0,T;L^2(\Omega)) \cap L^2(0,T;H^3(\Omega)) \hookrightarrow L^s(0,T;W^{\frac{6}{s},2}(\Omega)) \hookrightarrow L^s(0,T;L^{\frac{2s}{s-4}}(\Omega)), \\
\label{SobEmd2} & L^\infty(0,T;H^1(\Omega)) \cap L^2(0,T;H^2(\Omega)) \hookrightarrow L^r(0,T;W^{1+\frac{2}{r},2}(\Omega)) \hookrightarrow L^r(0,T;L^{\frac{6r}{r-4}}(\Omega)),
\end{align}
and with the choices $s = 6$ and $r = 10$, respectively, we obtain the following bounds
\[
\| \bu_n \|_{L^6(Q)} + \| \bo_n \|_{L^6(Q)} + \| \phi_n \|_{L^{10}(Q)} \leq C_\delta.
\]
Using the Sobolev embedding $W^{\frac{6-s}{s},2}(\Omega) \hookrightarrow L^{\frac{6s}{5s-12}}(\Omega)$ with the choice $s = \frac{18}{5}$, and  $W^{\frac{2}{r},2}(\Omega) \hookrightarrow L^{\frac{6r}{3r-4}}(\Omega)$ with the choice $r = \frac{10}{3}$ we deduce that 
\[
\|\nabla \bu_n \|_{L^{\frac{18}{5}}(Q)} + \|\nabla \bo_n \|_{L^{\frac{18}{5}}(Q)} + \| \nabla \phi_n \|_{L^{\frac{10}{3}}(Q)} \leq C_\delta.
\]
For an arbitrary $\bm{v} \in D(\bm{A}_S^{3/2})$, we consider multiplying \eqref{Gal:u} with the coefficients $\{\zeta_{v,k}\}_{k=1}^n$ where $P_{u,n}(\bm{v}) = \sum_{k=1}^n \zeta_{v,k} \bm{X}_k$. Upon summing over $k$ from $1$ to $n$ we have
\begin{align*}
& |(\pd_t(P_{u,n}(\rho_n \bu_n)), \bm{v})| \\
& \quad \leq |(\rho_n \bu_n\otimes \bu_n, \nabla P_{u,n}(\bm{v}))| + 2|(\eta_n \D \bu_n, \D P_{u,n}(\bm{v}))| + 2|(\eta_{r,n} \W \bu_n, \W P_{u,n}(\bm{v}))|  \\
& \qquad + \delta |(\bm{A}_S^{3/2} \bu_n, \bm{A}_S^{3/2} P_{u,n}(\bm{v}))| + |(\bm{J}_n \otimes \bu_n, \nabla P_{u,n}(\bm{v}))| + |(\phi_n \nabla \mu_n, P_{u,n}(\bm{v}))| \\
& \qquad + 2|(\eta_{r,n} \bo_n, \curl P_{u,n}(\bm{v}))|+ \tfrac{1}{2} |(\rho_n'' m(\phi_n)(\nabla \phi_n \cdot \nabla \mu_n) \bu_n, P_{u,n}(\bm{v}))| \\
& \qquad + \tfrac{1}{2}|(\rho_n' (P_n(\bu_n \cdot \nabla \phi_n) - \bu_n \cdot \nabla \phi_n) \bu_n, P_{u,n}(\bm{v}))| \\
& \qquad + \tfrac{1}{2}|(\rho_n'(\div (m(\phi_n) \nabla \mu_n) - P_n(\div (m(\phi_n) \nabla \mu_n))) \bu_n, P_{u,n}(\bm{v}))| \\
& \quad =: I_1 + \cdots + I_{10}.
\end{align*}
The terms $I_3$ and $I_{7}$ that are absent in \cite{SFrigeri1} can be estimated as
\[
I_3 + I_7 \leq C (\| \nabla \bu_n \| + \| \bo_n \|) \| P_{u,n}(\bm{v}) \|_{H^1} \leq C( \| \nabla \bu_n \| + \| \bo_n \|) \| \bm{v} \|_{D(\bm{A}_S^{1/2})}.
\]
Together with \cite[(4.42)-(4.49)]{SFrigeri1} for the other terms, we infer that 
\begin{align*}
\|\pd_t (P_{u,n}(\rho_n \bu_n)) \|_{D(\bm{A}_S^{3/2})^*} & \leq  C \Big ( \| \bu_n \|^2 + \| \nabla \bu_n \| + \delta \| \bm{A}_S^{3/2} \bu_n \| + \| \bu_n \| \| \nabla \mu_n \| + \| \bo_n \|\Big ) \\
& \quad  + C \Big (\| \nabla \phi_n \|_{L^{\frac{10}{3}}} \| \nabla \mu_n \| \| \bu_n \|_{L^6} + \| \bu_n \|_{L^6} \| \nabla \phi_n \|_{L^3} \| \bu_n \| \Big ) \\
& \quad + C\| \nabla \mu_n \| \Big (\| \nabla \bu_n \| + \| \nabla \phi_n \|_{L^{\frac{10}{3}}} \| \bu_n \|_{L^6} \Big ).
\end{align*}
Since $\| \nabla \phi_n\|_{L^{\frac{10}{3}}} \| \nabla \mu_n \| \| \bu_n \|_{L^6} \in L^{\frac{30}{29}}(0,T)$, we see that
\[
\| \pd_t (P_{u,n}(\rho_n \bu_n) ) \|_{L^{\frac{30}{29}}(0,T;D(\bm{A}_S^{3/2})^*)} \leq C_\delta.
\]
Similarly, thanks to the boundedness of $\nabla \phi_n$ in $L^{10/3}(Q)$ and $\bu_n$ in $L^6(Q)$, 
\begin{align*}
\| \nabla (\rho_n \bu_n) \|_{L^{\frac{15}{7}}(Q)} & \leq \| \rho_n \nabla \bu_n \|_{L^{\frac{15}{7}}(Q)} + \| \rho_n' \nabla \phi_n \cdot \bu_n \|_{L^{\frac{15}{7}}(Q)} \\
& \leq C \| \nabla \bu_n \|_{L^{\frac{15}{7}}(Q)} + C \| \nabla \phi_n \|_{L^{\frac{10}{3}}(Q)} \| \bu_n \|_{L^6(Q)},
\end{align*}
and so
\[
\| \rho_n \bu_n \|_{L^{\frac{15}{7}}(0,T;W^{1,\frac{15}{7}}(\Omega))} \leq C_\delta.
\]
Consequently, we have
\[
\| P_{u,n}(\rho_n \bu_n) \|_{L^2(0,T;H^1(\Omega))} \leq C_\delta.
\]
Via a similar calculation we also infer for the micro-rotation
\[
\| \rho_n \bo_n \|_{L^{\frac{15}{7}}(0,T;W^{1,\frac{15}{7}}(\Omega))} + \| P_{w,n}(\rho_n \bo_n) \|_{L^2(0,T;H^1(\Omega)) \cap W^{1,\frac{30}{29}}(0,T;D(\bm{A}_1^{3/2})^*)} \leq C_\delta.
\]
Based on the above uniform estimates, we collect the following compactness assertions along a nonrelabelled subsequence $n \to \infty$: By the Aubin--Lions lemma, there exists a function $\phi \in L^\infty(0,T;H^1(\Omega)) \cap L^2(0,T;H^2_n(\Omega)) \cap H^1(0,T;H^1(\Omega)^*)$ such that 
\begin{align*}
\phi_n \to \phi \text{ strongly in } L^2(0,T;H^{2-\gamma}(\Omega)) \text{ for } \gamma \in (0,2) \text{ and a.e.~in } Q,
\end{align*}
as well as a limit function $\mu \in L^2(0,T;H^1(\Omega))$ such that 
\[
\mu = a \phi - K \star \phi + F_{\epsilon}'(\phi) - \delta \Delta \phi \text{ a.e.~in } Q
\]
satisfying
\[
\div (m(\phi_n) \nabla \mu_n) \to \div (m(\phi) \nabla \mu) \text{ weakly in } L^2(0,T;H^1(\Omega)^*)
\]
and thus
\[
P_n(\div (m(\phi_n) \nabla \mu_n)) \to \div (m(\phi_n) \nabla \mu) \text{ weakly in } L^2(0,T;H^1(\Omega)^*).
\]
Next, there exist functions $\bu \in L^\infty(0,T;\HH_\sigma) \cap L^2(0,T;D(\bm{A}_S^{3/2})) \cap L^6(Q)^d$ as well as $\bm{\xi} \in L^2(0,T;H^1(\Omega;\R^d)) \cap W^{1,\frac{30}{29}}(0,T;D(\bm{A}_S^{3/2})^*)$ such that 
\[
P_{u.n}(\rho_n \bu_n) \to \bm{\xi} \text{ strongly in } L^2(Q).
\]
From the Lebesgue dominated convergence theorem and \eqref{ass:rho:approx} we deduce that $\rho_n \to \rho(\phi)$ strongly in $L^q(Q)$ for any $q \in [2,\infty)$. Choosing $q = 3$ and together with the weak convergence of $\bu_n$ in $L^6(Q)^d$, we get $\rho_n \bu_n \to \rho(\phi) \bu$ weakly in $L^2(Q)^d$, which allows us to identify $\bm{\xi}$ as $\rho(\phi) \bu$. Consequently, it holds that
\[
\pd_t P_{u,n}(\rho_n \bu_n) \to \pd_t (\rho(\phi) \bu) \text{ weakly in } L^{\frac{30}{29}}(0,T;D(\bm{A}_S^{3/2})^*),
\]
as well as
\[
\| \rho_n |\bu_n|^2 \|_{L^1(Q)} = \int_Q P_{u,n}(\rho_n \bu_n) \cdot \bu_n \, dx \, dt \to \int_Q \rho(\phi) |\bu|^2 \, dx \, dt.
\]
This $L^2(Q)^d$-norm convergence of $(\rho_n)^{1/2} \bu_n$ yields the strong convergence $(\rho_n)^{1/2} \bu_n \to (\rho(\phi))^{1/2} \bu$ in $L^2(Q)^d$, and when combined with the strong convergence of $(\rho_n)^{-1/2} \to \rho(\phi)^{-1/2}$ in $L^{q}(Q)$ for $q \in [2,\infty)$ due to \eqref{ass:rho:approx} we arrive at
\[
\bu_n \to \bu \text{ strongly in } L^{2-\gamma}(Q)^d  \text{ for } \gamma \in (0,1] \text{ and a.e.~in } Q.
\]
Boundedness of $\bu_n$ in $L^6(Q)^d$ further provides via interpolation of Lebesgue spaces
\[
\bu_n \to \bu \text{ strongly in } L^{6-\gamma}(Q)^d \text{ for } \gamma \in (0,5].
\]
Similarly, strong convergence of $\nabla \phi_n$ in $L^2(0,T;H^{1-\gamma}(\Omega;\R^d))$ and boundedness in $L^{10/3}(Q)^d$ yields
\[
\nabla \phi_n \to \nabla \phi \text{ strongly in } L^{\frac{10}{3}-\gamma}(Q)^d \text{ for } \gamma \in (0,\tfrac{7}{3}].
\]
Hence, we obtain $\bu_n \cdot \nabla \phi_n \to \bu \cdot \nabla \phi$ strongly in $L^{\frac{15}{7}-\gamma}(Q)$ for $\gamma \in (0, \tfrac{8}{7}]$ and therefore
\[
P_{n}(\bu_n \cdot \nabla \phi_n) \to \bu \cdot \nabla \phi \text{ strongly in } L^{\frac{15}{7}-\gamma}(Q) \text{ for } \gamma \in (0, \tfrac{8}{7}].
\]
Choosing $\gamma = \frac{1}{7}$, we have strong convergence in $L^2(Q)$ so that for arbitrary $\zeta(t) \in C^\infty_c(0,T)$ the penultimate term on the right-hand side of \eqref{Gal:u} converges to zero:
\[
\int_Q \rho_n'(P_n(\bu_n \cdot \nabla \phi_n) - \bu_n \cdot \nabla \phi_n) \bu_n \cdot \zeta(t) \bm{X}_k \, dx \, dt \to 0
\]
for all $k = 1, \dots, n$. Meanwhile, choosing $\gamma < \frac{1}{7}$, and using $\rho_n'' m(\phi_n) \to \rho''(\phi) m(\phi)$ strongly in $L^{(30-14\gamma)/(1-7\gamma)}(Q)$ leads to the strong convergence of $\rho_n'' m(\phi_n)(\bu_n  \cdot \bm{v}) \nabla \phi_n \to \rho''(\phi) m(\phi) (\bu \cdot \bm{v}) \nabla \phi$ in $L^2(Q)$. Combining with the weak convergence of $\nabla \mu_n$ in $L^2(Q)^d$ it holds that 
\begin{align*}
& \frac{1}{2}\int_Q \rho_n'' m(\phi_n) (\nabla \phi_n \cdot \nabla \mu_n) \bu_n \cdot \zeta(t) \bm{X}_k \, dx \, dt \\
& \quad \to \frac{1}{2}\int_Q \rho''(\phi) m(\phi) (\nabla \phi \cdot \nabla \mu) \bu \cdot \zeta(t) \bm{X}_k \, dx \, dt = - \int_Q R \bu \cdot \zeta(t) \bm{X}_k \, dx \, dt.
\end{align*}
For the last term on the right-hand side of \eqref{Gal:u} we aim to establish the strong convergence $\rho_n'\bu_n \cdot \bm{v} \to \rho'(\phi) \bu \cdot \bm{v}$ in $L^2(0,T;H^1(\Omega))$ for $\bm{v} \in D(\bm{A}_S^{3/2}) \hookrightarrow H^3(\Omega;\R^3)$. A short calculation shows that 
\begin{align*}
& \| \rho_n'\bu_n \cdot \bm{v} - \rho'(\phi_n) \bu \cdot \bm{v} \|_{L^2(0,T;H^1(\Omega))} \\
& \quad \leq \|(\rho_n' \bu_n - \rho'(\phi) \bu) \cdot \bm{v} \|_{L^2(Q)} + \| (\rho_n'' \nabla \phi_n - \rho''(\phi) \nabla \phi) \bu_n \cdot \bm{v} \|_{L^2(Q)} \\
& \qquad + \| \rho''(\phi) \nabla \phi (\bu_n - \bu) \cdot \bm{v} \|_{L^2(Q)} + \| (\rho'_n - \rho'(\phi) )(\nabla \bu_n)\bm{v} \|_{L^2(Q)} \\
& \qquad + \| \rho'(\phi) \nabla (\bu_n - \bu) \bm{v} \|_{L^2(Q)} + \| (\rho_n' \bu_n - \rho'(\phi) \bu) \nabla \bm{v} \|_{L^2(Q)}\\
& \quad \leq C \Big (\| \rho_n' \bu_n - \rho'(\phi) \bu \|_{L^2(Q)} + \| \rho_n'' - \rho''(\phi) \|_{L^{30}(Q)} \| \nabla \phi_n \|_{L^{10/3}(Q)} \| \bu_n \|_{L^6(Q)} \\
& \qquad + \| \nabla \phi_n - \nabla \phi \|_{L^4(Q)} \| \bu_n \|_{L^4(Q)} + \| \nabla \phi \|_{L^4(Q)} \| \bu_n - \bu \|_{L^4(Q)} \\
& \qquad + \| \rho_n' - \rho'(\phi) \|_{L^{18/4}(Q)}\| \nabla \bu_n \|_{L^{18/5}(Q)} + \| \nabla (\bu_n - \bu) \|_{L^2(Q)} \\
& \qquad + \| \rho_n' \bu_n - \rho'(\phi) \bu \|_{L^2(Q)} \Big ) \| \bm{v} \|_{D(\bm{A}_S^{3/2})}.
\end{align*}
By the strong convergences of $\rho_n'$ and $\rho_n''$ in $L^q(Q)$ for any $q \in [2,\infty)$, as well as the strong convergence $\nabla \bu_n \to \nabla \bu$ in $L^2(Q)^{d \times d}$ via the Aubin--Lions lemma and the boundedness of $\bu_n$ in $L^2(0,T;D(\bm{A}_S^{3/2})) \cap W^{1,\frac{30}{29}}(0,T;D(\bm{A}_S^{3/2})^*)$ we find that 
\[
\rho_n' \bu_n \cdot \bm{v} \to \rho'(\phi) \bu \cdot \bm{v} \text{ strongly in } L^2(0,T;H^1(\Omega)).
\]
Hence, when combining with the weak convergence $P_n(\div (m(\phi_n) \nabla \mu_n)) \to \div (m(\phi) \nabla \mu)$ in $L^2(0,T;H^1(\Omega)^*)$ established earlier, we see that the last term on the right-hand side of \eqref{Gal:u} converges to zero as $n \to \infty$.

For the micro-rotation $\bo_n$, analogous weak/strong convergence assertions hold. Then, via a standard argument of multiplying \eqref{Gal:u}--\eqref{Gal:phi} by $\zeta(t) \in C^\infty_c(0,T)$ and passing to the limit $n \to \infty$ we obtain the variational equalities in \eqref{appsys:weakform}. 

The assertion $\bu \in C_w([0,T];\HH_\sigma)$ comes from apply Lemma \ref{lem:Cw} to $\bm{U} := \rho(\phi) \bu \in L^\infty(0,T;\HH_\sigma) \cap W^{1,\frac{30}{29}}(0,T;D(\bm{A}_S^{3/2})^*)$, and noticing that $\rho(\phi)^{-1} \in L^\infty(0,T;H^1(\Omega)) \cap H^1(0,T;H^1(\Omega)^*) \Subset C^0([0,T];L^2(\Omega))$ due to \eqref{ass:rho:approx}, we have $\bu = \rho(\phi)^{-1} \bm{U} \in C_w([0,T];\HH_\sigma)$. The argument to show $\bo \in C_w([0,T];\HH)$ is similar.

For the attainment of initial conditions, we note that from the compact embedding $L^\infty(0,T;H^1(\Omega)) \cap H^1(0,T;H^1(\Omega)^*) \Subset C^0([0,T];L^2(\Omega))$ that $\phi(0) \in L^2(\Omega)$ is well-defined. Let $\psi \in H^1(\Omega)$ and $\zeta(t) \in C^\infty_c(0,T)$ with $\zeta(T) = 0$ be arbitrary. Fix $K \in \mathbb{N}$ and denote by $\{\alpha_k\}_{k=1}^K$ the coefficients of $P_K(\psi)$. Upon multiplying \eqref{Gal:phi} by $\alpha_k \zeta(t)$, summing from $k = 1$ to $K$, and integrating in time leads to 
\begin{align*}
 0 = \int_0^T  \zeta(t)  (m(\phi_n) \nabla \mu_n - \phi_n \bu_n, \nabla P_K(\psi)) - \zeta'(t) (P_n(\phi_{0,\delta}) - \phi_n, P_K(\psi)) \, dt.
\end{align*}
Passing to the limit $n \to \infty$ and then $K \to \infty$ yields
\begin{align*}
0 & = \int_0^T \zeta(t) (m(\phi) \nabla \mu - \phi \bu, \nabla \psi) - \zeta'(t) (\phi_{0,\delta} - \phi, \psi) \, dt \\
& = \int_0^T \zeta(t) \langle \pd_t \phi, \psi \rangle_{H^1(\Omega)} - \zeta'(t) (\phi_{0,\delta} - \phi, \nabla \psi) \, dt = \int_0^T \zeta'(t) (\phi(0) - \phi_{0,\delta}, \psi) \, dt. 
\end{align*}
Since $\psi$ and $\zeta$ are arbitrary, this shows that $\phi(0) = \phi_{0,\delta}$. The argument for establishing $\bu(0) = \bu_0$ and $\bo(0) = \bo_0$ are similar, see e.g.~\cite{SFrigeri1}.

Let us rewrite \eqref{Gal:energyId} as
\[
\frac{d}{dt} E_n(t) + D_n(t) = 0.
\]
Multiplying the above by an arbitrary $w \in W^{1,1}(0,T)$ such that $w(T) = 0$ and $w \geq 0$, integrating by parts in time yields
\begin{align*}
\int_0^T D_n(t) w(t) \, dt = \int_0^T w'(t) E_n(t) \, dt + w(0) E_n(0) = 0.
\end{align*}
Passing to the limit $n \to \infty$ with the above weak/strong convergences for $(\bu_n, \bo_n, \phi_n, \mu_n)$ and the weak lower semicontinuity of the norms leads to 
\begin{align*}
& \int_0^T w(t) \int_\Omega m(\phi) |\nabla \mu|^2 + 2 \eta(\phi) |\D \bu|^2 + 4 \eta_r(\phi) |\tfrac{1}{2} \curl \bu - \bo|^2 \, dx \, dt \\
& \qquad + \int_0^T w(t) \int_\Omega c_0(\phi) |\div \bo|^2 + 2 c_d(\phi) |\D \bo|^2 + 2 c_a(\phi) |\W \bo|^2 \, dx \, dt \\
& \qquad + \int_0^T w(t) \int_\Omega  + \delta |\bm{A}_S^{3/2} \bu|^2 + \delta |\bm{A}_1^{3/2} \bo|^2 \, dx \, dt \\
& \quad \leq \int_0^T w'(t) \Big [ \int_\Omega \frac{\rho(\phi)}{2} \Big ( |\bu|^2 + |\bo|^2 \Big ) + F_{\epsilon}(\phi) + \frac{\delta}{2} |\nabla \phi|^2 \, dx + e(\phi) \Big ] \, dt \\
& \qquad w(0) \Big [ \int_\Omega \frac{\rho(\phi_{0,\delta})}{2} \Big ( |\bu_0|^2 + |\bo_0|^2 \Big ) + F_{\epsilon}(\phi_{0,\delta}) + \frac{\delta}{2} |\nabla \phi_{0,\delta}|^2 \, dx + e(\phi_{0,\delta}) \Big ]
\end{align*}
holding for all $w \in W^{1,1}(0,T)$ with $w(T) = 0$ and $w \geq 0$. Then, the energy inequality \eqref{ene.ineq:weaksol} follows from applying Lemma \ref{lem:Energyineq}.
\end{proof}

With the establishment of Lemma \ref{lem:exist:weaksol}, Step 1 of the proof of Theorem \ref{thm:nMAGG:weaksol} is completed. Let us briefly comment on the remaining steps. In Step 2 we consider a polynomial approximation $F_{\epsilon} := F_{1,\epsilon} + F_{2,\epsilon}$ of the singular potential $F = F_1 + F_2$ defined as 
\begin{align}\label{polyapprox:F}
F_{1,\epsilon}^{(p)}(s) = \begin{cases}
F_1^{(p)}(1-\epsilon) & \text{ for } s \geq 1- \epsilon, \\
F_1^{(p)}(s) & \text{ for } |s| \leq 1- \epsilon, \\
F_1^{(p)}(\epsilon - 1) & \text{ for } s \leq -1 + \epsilon, 
\end{cases} \quad F_{2,\epsilon}''(s) = \begin{cases}
F_2''(1-\epsilon) & \text{ for } s \geq 1- \epsilon, \\
F_2''(s) & \text{ for } |s| \leq 1- \epsilon, \\
F_2''(\epsilon - 1) & \text{ for } s \leq -1 + \epsilon, 
\end{cases}
\end{align} 
with $F_{1,\epsilon}^{(k)}(0) = F_{1}^{(k)}(0)$ for $k =0, 1, \dots, p-1$ and $F_{2,\epsilon}^{(k)}(0) = F_2^{(k)}(0)$ for $k = 0, 1$. By \cite[Lemmas 1 \& 2]{FrigeriGrasselli} and \cite[Proof of Theorem 2]{FrigeriGrasselliRocca} there exist constants $c_0 > 0$, $c_1 > 0$, $c_2 > 0$ and $\epsilon_0 \in (0,1]$ such that for all $\epsilon \in (0,\epsilon_0)$:
\begin{align*}
F_{\epsilon}''(s) + a(x) \geq c_0, \quad 
F_{\epsilon}(s) \geq c_1 |s|^p - c_2 \quad \forall s \in \R \text{ and a.e.~} x \in \Omega.
\end{align*}
Furthermore, by construction it holds that
\begin{align}\label{F0approx}
|F_{1,\epsilon}'(s)| \leq |F_1'(s)|, \quad |F_{1,\epsilon}(s)| \leq |F_1(s)| \quad \forall s \in (-1,1),
\end{align}
and hence for an initial condition $\phi_{0,\delta} \in H^1(\Omega)$ such that $F(\phi_{0,\delta}) \in L^1(\Omega)$ we find that the right-hand side of the energy inequality \eqref{ene.ineq:weaksol} for a weak solution $(\bu_{\epsilon,\delta}, \bo_{\epsilon,\delta}, \phi_{\epsilon,\delta}, \mu_{\epsilon,\delta})$ from Lemma \ref{lem:exist:weaksol} can be bounded uniformly in $\epsilon \in (0,1]$. 

The only main difference with Step 1 is estimating $F_{\epsilon}'(\phi_{\epsilon,\delta})$ in $L^2(0,T;L^1(\Omega))$ uniformly in $\epsilon$, and in turn the derivation of uniform $L^2(0,T;L^2(\Omega))$ bounds for $\mu_{\epsilon,\delta}$ requires a modified argument. As we will provide a similar reasoning in the next section we omit the details here. Let us remark that the compactness assertions for $\bu_{\epsilon,\delta}$ and $\bo_{\epsilon,\delta}$ are unchanged from those in Step 1.

Then, passing to the limit $\epsilon \to 0$ yields weak solutions $(\bu_\delta, \bo_\delta, \phi_\delta, \mu_\delta)$ with the same regularity as in \eqref{Gal:regularity} to the approximate system \eqref{app:model1} but with the singular potential $F$. In turn, this provides the boundedness property $\phi_{\delta} \in L^\infty(Q)$ with $|\phi_\delta| < 1$ a.e.~in $Q$, while the mass density function $\rho(\phi_\delta)$ coincides with the affine linear function $\frac{\overline{\rho}_1 - \overline{\rho}_2}{2} \phi_\delta + \frac{\overline{\rho}_1 + \overline{\rho}_2}{2}$, and the terms involving $R = -m(\phi_\delta) \nabla \mu_\delta \cdot \nabla \rho'(\phi_\delta)$ in \eqref{app:model:u} and \eqref{app:model:w} vanish. 

Lastly, in Step 3, from an analogous energy inequality to \eqref{ene.ineq:weaksol} (with $F_{\epsilon}$ replaced by the original singular potential $F$) for $(\bu_\delta, \bo_\delta, \phi_\delta, \mu_\delta)$ and a similar set of arguments in Step 2 allow us to pass to the limit $\delta \to 0$ to complete the proof. The main difference with Steps 1 and 2 is the estimate for the time derivatives $\pd_t (\rho(\phi_\delta) \bu_\delta)$ and $\pd_t(\rho(\phi_\delta) \bo_\delta)$. Let us briefly sketch the modifications. From the analogue of \eqref{weak:form:bu:sec2} for  $(\bu_\delta, \bo_\delta, \phi_\delta, \mu_\delta)$:
\begin{align*}
\langle \pd_t (\rho_\delta \bu_\delta), \bm{v} \rangle_{D(\bm{A}_S^{3/2})} & = (\rho_\delta \bu_\delta \otimes \bu_\delta, \nabla \bm{v}) - (2\eta_\delta \D \bu_\delta, \D \bm{v}) - (2 \eta_{r,\delta} \W \bu_\delta, \W \bm{v}) + (\bm{J}_\delta \otimes \bu_\delta, \nabla \bm{v})  \\
& \quad - \delta(\bm{A}_S^{3/2} \bu_\delta, \bm{A}_S^{3/2} \bm{v})- (\phi_\delta \nabla \mu_\delta, \bm{v}) + (2 \curl(\eta_{r,\delta} \bo_\delta), \bm{v})
\end{align*}
for a.e.~$t \in (0,T)$ and for all $\bm{v} \in D(\bm{A}_S^{3/2})$, where $\rho_\delta := \rho(\phi_\delta)$, $\eta_\delta = \eta(\phi_\delta)$, $\eta_{r,\delta} = \eta_r(\phi_\delta)$ and $\bm{J}_\delta = - \rho'(\phi_\delta) m(\phi_\delta) \nabla\mu_\delta$, we use the uniform estimates of $\bu_\delta$ in $L^\infty(0,T;\HH_\sigma) \cap L^2(0,T;\HH^1_{\sigma})$, of $\delta^{1/2} \bm{A}_S^{3/2} \bu_\delta$ in $L^2(Q)^d$, of $\bo_\delta$ in $L^\infty(0,T;\HH) \cap L^2(0,T;\HH^1)$, of $\phi_\delta$ in $L^\infty(Q)$ and of $\mu_\delta$ in $L^2(0,T;H^1(\Omega))$ to obtain 
\begin{align*}
\| \pd_t (\rho_\delta \bu_\delta) \|_{L^2(0,T;D(\bm{A}_S^{3/2})^*)} \leq C
\end{align*}
with a constant $C$ independent of $\delta$. Upon passing to the limit $\delta \to 0$ we obtain a quadruple of functions $(\bu, \bo, \phi, \delta)$ satisfying \eqref{weak:nloc:1}--\eqref{weak:nloc:2} but holding for all $\bm{v} \in D(\bm{A}_S^{3/2})$ and $\bm{z} \in D(\bm{A}_1^{3/2})$. The absence of the $\delta$ regularisation terms allows us to use a density argument to have \eqref{weak:form:bu:sec2} and \eqref{weak:form:bo:sec2} holding for all $\bm{v} \in D(\bm{A}_S)$ and $\bm{z} \in D(\bm{A}_1)$. Then, \textit{a posteriori} we find that $\pd_t(\rho \bu) \in L^{\frac{4}{3}}(0,T;D(\bm{A}_S)^*)$ and $\pd_t(\rho \bo) \in L^{\frac{4}{3}}(0,T;D(\bm{A}_1)^*)$ by a comparison of terms. The attainment of the initial conditions and derivation of the energy inequality \eqref{nloc:ene:ineq:sec2} proceed similarly as before.

\section{Nonlocal-to-local weak convergence}\label{sec:weakconv}
In this section we assume the nonlocal interaction kernel $K$ is chosen to be the form $K_\kappa(x) = \frac{\gamma_\kappa(|x|)}{|x|^2}$ where the family of functions $(\gamma_\kappa)_{\kappa >0}$ satisfies \eqref{nonl:to:loc:weak:ass}. We denote  
\begin{equation*}
\begin{alignedat}{3}
e_{\kappa}(\phi) & :=   \frac{1}{4}\int_{\Omega}\int_{\Omega}K_\kappa(x-y)(\phi(x)-\phi(y))^2\, dx \,dy  \quad && \text{ for } \phi \in L^2(\Omega), \\
e_0(\phi) & := \frac{1}{2} \int_\Omega |\nabla \phi|^2 \, dx \quad&&  \text{ for } \phi \in H^1(\Omega),
\end{alignedat}
\end{equation*}
and set $a_\kappa(x) := (K_\kappa \star 1)(x)$. Furthermore, we denote 
\begin{equation*}
\begin{alignedat}{4}
\mathcal{E}_{\kappa,\nloc}(\bu, \bo, \phi) & := \int_\Omega \frac{\rho(\phi)}{2} (|\bu|^2 + |\bo|^2) \, dx + G_{\kappa,\nloc}(\phi), && \quad G_{\kappa,\nloc}(\phi) && := e_\kappa(\phi) + \int_\Omega F(\phi) \,dx, \\
\mathcal{E}_{\loc}(\bu, \bo, \phi) & := \int_\Omega \frac{\rho(\phi)}{2} (|\bu|^2 + |\bo|^2) \, dx + G_{\loc}(\phi), && \quad \quad G_{\loc}(\phi) && := e_0(\phi) + \int_\Omega F(\phi) \, dx.
\end{alignedat}
\end{equation*}
For the nonlocal-to-local convergence of weak solutions, we report on the following two auxiliary results.
\begin{lem}[Lemma 3.3 of \cite{Davoli}] \label{sec2:lem:nloc:to:loc:ene}
For every $\phi,\zeta \in H^{1}(\Omega)$, it holds that
\begin{subequations}\label{nloc:to:loc:conv1}
\begin{alignat}{2}
 \lim_{\kappa \to 0} e_{\kappa}(\phi) & = e_0(\phi), \\[1ex]
 \lim_{\kappa \to 0} \int_{\Omega} (a_\eps \phi - K_\kappa \star \phi)(x)\zeta(x) \, dx &= \int_{\Omega} \nabla \phi(x) \cdot \nabla\zeta(x) \, dx.
\end{alignat}
\end{subequations}
Moreover, for every sequence $(\phi_\kappa)_{\kappa > 0} \subset L^2(\Omega)$ and $\phi \in L^2(\Omega)$ it holds that 
\begin{subequations}
\begin{alignat}{2}
\sup_{\kappa > 0} e_\kappa(\phi) < + \infty \quad & \Rightarrow \quad (\phi_\kappa)_{\kappa > 0} \text{ is relatively compact in } L^2(\Omega), \\[1ex]
\label{nloc:to:loc:4} \phi_\kappa \to \phi \text{ in } L^2(\Omega) \quad & \Rightarrow  \quad e_0(\phi) \leq \liminf_{\kappa \to 0} e_\kappa(\phi_\kappa).
\end{alignat}
\end{subequations}
\end{lem}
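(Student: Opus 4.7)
The plan is to invoke the Bourgain--Brezis--Mironescu (BBM) framework for nonlocal-to-local convergence of Sobolev seminorms, adapted to the bounded or periodic setting under \eqref{nonl:to:loc:weak:ass}; the complete argument is due to Davoli and collaborators. The cornerstone is a polar change of variables $y = x + rh$ with $r = |y-x|$ and $h \in \mathbb{S}^{d-1}$, which rewrites
\[
e_\kappa(\phi) = \frac{1}{4}\int_{\mathbb{S}^{d-1}} \int_0^\infty \gamma_\kappa(r) r^{d-1} \int_{\Omega_{r,h}} \left|\frac{\phi(x+rh) - \phi(x)}{r}\right|^2 dx \, dr \, d\mathcal{H}^{d-1}(h),
\]
where $\Omega_{r,h} := \Omega \cap (\Omega - rh)$. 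The concentration of the measure $\gamma_\kappa(r)r^{d-1}dr$ at $r = 0$, combined with the identity $\int_{\mathbb{S}^{d-1}} (h\cdot v)^2 \, d\mathcal{H}^{d-1}(h) = C_d |v|^2$ for $v \in \R^d$ and the normalization $\int_0^\infty \gamma_\kappa(r) r^{d-1}\, dr = 2/C_d$, is what produces $|\nabla\phi|^2$ in the limit with the correct constant.

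For \eqref{nloc:to:loc:conv1}, I would first establish the limits for $\phi, \zeta \in C^\infty(\overline{\Omega})$: Taylor expansion gives $(\phi(x+rh)-\phi(x))/r = \nabla\phi(x)\cdot h + O(r)$ uniformly in $x$, and the concentration/normalization properties of $\gamma_\kappa$ allow one to pass to the limit inside the polar integral via dominated convergence. To extend to general $\phi \in H^1(\Omega)$, I would appeal to Ponce's uniform estimate $e_\kappa(\phi) \leq C\|\nabla\phi\|^2$ with $C$ independent of $\kappa$, plus density of smooth functions and a standard $3\varepsilon$-argument. The second limit, involving $I_\kappa(\phi,\zeta) := \int_\Omega (a_\kappa \phi - K_\kappa \star \phi)\zeta\, dx$, follows by polarization, noting that
\[
I_\kappa(\phi,\zeta) = \tfrac{1}{2}\int_\Omega\int_\Omega K_\kappa(x-y)(\phi(x)-\phi(y))(\zeta(x)-\zeta(y))\, dx\, dy,
\]
to which the same polar-coordinate argument applies, now producing $\int_\Omega \nabla\phi \cdot \nabla\zeta\, dx$ through the tensorial identity $\int_{\mathbb{S}^{d-1}} (h\cdot v)(h\cdot w)\, d\mathcal{H}^{d-1}(h) = C_d\, v\cdot w$.

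For the compactness assertion under $\sup_\kappa e_\kappa(\phi_\kappa) < \infty$, I would invoke Ponce's refinement of BBM: the uniform energy bound controls the $L^2$-translations $\|\tau_{rh}\phi_\kappa - \phi_\kappa\|$ uniformly in $\kappa$, which together with a uniform $L^2$-bound on $(\phi_\kappa)$ yields equicontinuity in the Riesz--Fr\'echet--Kolmogorov sense and hence relative compactness in $L^2(\Omega)$. The liminf inequality \eqref{nloc:to:loc:4} is the matching $\Gamma$-lower bound: for fixed $\delta > 0$ one truncates the inner integral to $r \in (0,\delta)$, exploits the strong $L^2$-convergence $\phi_\kappa \to \phi$ together with Fatou's lemma to pass to the limit in the difference quotients, and then lets $\delta \to 0$. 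The principal technical obstacle throughout is the boundary behaviour: the translated set $\Omega_{r,h}$ shrinks in a neighbourhood of $\partial\Omega$, so a clean Fubini manipulation is unavailable. This is handled by invoking a Sobolev extension $E: H^1(\Omega) \to H^1(\R^d)$ to reduce to the BBM theorem on $\R^d$, while controlling the error arising from $\Omega \setminus \Omega_{r,h}$ uniformly in $\kappa$ via the normalization $\int_0^\infty \gamma_\kappa(r)r^{d-1}\, dr = 2/C_d$.
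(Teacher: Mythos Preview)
The paper does not actually prove this lemma: it is stated as Lemma~\ref{sec2:lem:nloc:to:loc:ene} with the attribution ``Lemma 3.3 of \cite{Davoli}'' and no proof is given in the text. Your sketch is a faithful outline of the Bourgain--Brezis--Mironescu/Ponce argument that underlies the cited result, so in spirit you are reconstructing the proof that the paper simply imports. There is nothing to compare on the paper's side beyond the citation; your proposal is correct as a proof strategy and consistent with the referenced source.
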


\begin{lem}[Lemma 3.4 of \cite{Davoli}]\label{sec2:lem:phi:e:str:con:L2}
For any $\delta > 0$, there exists $C_\delta > 0$ and $\kappa_\delta > 0$ such that for any $(\phi_\kappa)_{\kappa > 0} \subset L^2(\Omega)$,
\begin{equation}
\| \phi_{\kappa_1} - \phi_{\kappa_2} \|^2 \leq \delta (e_{\kappa_1}(\phi_{\kappa_1}) + e_{\kappa_2}(\phi_{\kappa_2})) + C_\delta \| \phi_{\kappa_1} - \phi_{\kappa_2} \|_{H^1(\Omega)^*}^2
\end{equation}
holds for any $\kappa_1$, $\kappa_2 \in (0,\kappa_\delta)$.
\end{lem}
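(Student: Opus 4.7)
The plan is to argue by contradiction via the compactness statement in Lemma \ref{sec2:lem:nloc:to:loc:ene}. Suppose the conclusion fails. Then there exists $\delta_0 > 0$ such that for every $n \in \N$, one can find a family $(\phi^{(n)}_\kappa)_{\kappa > 0} \subset L^2(\Omega)$ and indices $\kappa_1^{(n)}, \kappa_2^{(n)} \in (0, 1/n)$ satisfying
\[
\| \phi^{(n)}_{\kappa_1^{(n)}} - \phi^{(n)}_{\kappa_2^{(n)}} \|^2 > \delta_0 \bigl( e_{\kappa_1^{(n)}}(\phi^{(n)}_{\kappa_1^{(n)}}) + e_{\kappa_2^{(n)}}(\phi^{(n)}_{\kappa_2^{(n)}}) \bigr) + n \, \| \phi^{(n)}_{\kappa_1^{(n)}} - \phi^{(n)}_{\kappa_2^{(n)}} \|_{H^1(\Omega)^*}^2.
\]
Set $D_n := \| \phi^{(n)}_{\kappa_1^{(n)}} - \phi^{(n)}_{\kappa_2^{(n)}} \|$ and $\psi_j^{(n)} := \phi^{(n)}_{\kappa_j^{(n)}}/D_n$ for $j \in \{1,2\}$. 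Since each $e_\kappa$ is a homogeneous quadratic functional, dividing the above strict inequality by $D_n^2$ yields, for every $n$,
\[
1 > \delta_0 \bigl( e_{\kappa_1^{(n)}}(\psi_1^{(n)}) + e_{\kappa_2^{(n)}}(\psi_2^{(n)}) \bigr) + n \, \| \psi_1^{(n)} - \psi_2^{(n)} \|_{H^1(\Omega)^*}^2, \qquad \| \psi_1^{(n)} - \psi_2^{(n)} \| = 1.
\]
From this I read off two pieces of information: (i) the nonlocal energies satisfy $\sup_{n} e_{\kappa_j^{(n)}}(\psi_j^{(n)}) < \infty$ for $j=1,2$, and (ii) $\psi_1^{(n)} - \psi_2^{(n)} \to 0$ in $H^1(\Omega)^*$ as $n \to \infty$.

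By the compactness implication in Lemma \ref{sec2:lem:nloc:to:loc:ene}, the uniform bound (i) forces each of the sequences $(\psi_j^{(n)})_{n \in \N}$ to be relatively compact in $L^2(\Omega)$. Extracting a common (nonrelabelled) subsequence, there are $\psi_1, \psi_2 \in L^2(\Omega)$ with $\psi_j^{(n)} \to \psi_j$ strongly in $L^2(\Omega)$. Since $L^2(\Omega) \hookrightarrow H^1(\Omega)^*$ continuously, $\psi_1^{(n)} - \psi_2^{(n)}$ also converges to $\psi_1 - \psi_2$ in $H^1(\Omega)^*$, which together with (ii) gives $\psi_1 = \psi_2$. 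Hence $\psi_1^{(n)} - \psi_2^{(n)} \to 0$ strongly in $L^2(\Omega)$, contradicting the normalization $\| \psi_1^{(n)} - \psi_2^{(n)} \| = 1$.

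The main obstacle, in my view, is the passage from a uniform bound on the nonlocal energies $e_{\kappa}(\phi_{\kappa})$ (as $\kappa \to 0^+$) to $L^2$-precompactness of the sequence, because the rescaled kernels $K_\kappa$ themselves degenerate in the limit. This is precisely what is delivered by Lemma \ref{sec2:lem:nloc:to:loc:ene}, whose proof relies essentially on the rescaling structure \eqref{scaled:kernel} together with the normalization and concentration hypothesis \eqref{nonl:to:loc:weak:ass}. Once that compactness is at one's disposal, the rest is a standard normalization-and-contradiction argument built on the quadratic structure of $e_\kappa$ and the continuity of the embedding $L^2(\Omega) \hookrightarrow H^1(\Omega)^*$.
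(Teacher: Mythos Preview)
The paper does not give its own proof of this lemma; it is quoted verbatim from \cite{Davoli}. Your contradiction-and-normalization strategy is exactly the standard route to such interpolation-type inequalities and is essentially what one finds in the cited reference.

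There is, however, one genuine gap. After normalizing you invoke the compactness implication of Lemma~\ref{sec2:lem:nloc:to:loc:ene} for each sequence $(\psi_j^{(n)})_n$, but that compactness (of Bourgain--Brezis--Mironescu/Ponce type) requires, in addition to the uniform energy bound, a uniform $L^2(\Omega)$ bound on the sequence. The paper's statement of Lemma~\ref{sec2:lem:nloc:to:loc:ene} omits this hypothesis, but it is not dispensable: the constant functions $\phi_\kappa \equiv \kappa^{-1}$ satisfy $e_\kappa(\phi_\kappa)=0$ for all $\kappa$ yet are unbounded in $L^2(\Omega)$. In your argument you only know $\|\psi_1^{(n)}-\psi_2^{(n)}\|=1$ and that the energies are bounded; nothing prevents $\|\psi_j^{(n)}\|\to\infty$.

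The fix is short. Since both the difference $\psi_1^{(n)}-\psi_2^{(n)}$ and the energies $e_{\kappa_j^{(n)}}(\psi_j^{(n)})$ are unchanged under subtraction of a common constant, replace $\psi_j^{(n)}$ by $\psi_j^{(n)}-\overline{\psi_1^{(n)}}$. Then $\psi_1^{(n)}$ has zero mean, and the nonlocal Poincar\'e inequality (uniform in small $\kappa$, also part of the Ponce theory) gives $\|\psi_1^{(n)}\|^2 \leq C\, e_{\kappa_1^{(n)}}(\psi_1^{(n)}) \leq C/\delta_0$; then $\|\psi_2^{(n)}\| \leq \|\psi_1^{(n)}\| + 1$ is bounded as well. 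With this in hand, your compactness step goes through and the rest of the argument is correct as written.
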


A further property of the singular potential $F$ that is useful is the following, see e.g.~\cite[Proposition 4.1]{Miranvillebk}: For any $r \in (-1,1)$, there exist constants $c_r > 0$ and $C_r \geq 0$ such that the singular part $F_1$ of the potential $F$ satisfies
\begin{align}\label{F'abs:ineq}
c_r |F_1'(s)| \leq F_1(s)(s-r) + C_r \quad \forall s \in (-1,1).
\end{align}
The main result of this section is the convergence of weak solutions of the nMAGG model \eqref{nonl:model:equ} to the weak solutions of the local MAGG model \eqref{local:model:equ}.

\begin{thm}\label{thm:weak:cov}
Suppose that \eqref{gen:ass1}--\eqref{gen:ass3}, \eqref{sing:pot:ass1}--\eqref{sing:pot:ass5} and \eqref{nonl:to:loc:weak:ass} hold. For any $\kappa \in (0,1)$ let $\bu_{0,\kappa} \in \HH_\sigma$, $\bo_{0,\kappa} \in \HH$ and $\phi_{0,\kappa} \in L^\infty(\Omega)$ such that $F(\phi_{0,\kappa}) \in L^1(\Omega)$ and $|\overline{\phi_{0,\kappa}}| < 1$. Moreover, we assume that there are $\bu_0 \in \HH_\sigma$, $\bo_0 \in \HH$ and $\phi_0 \in H^1(\Omega)$ such that $\bu_{0,\kappa} \to \bu_0$ strongly in $\HH_\sigma$, $\bo_{0,\kappa} \to \bo_0$ strongly in $\HH$, $\phi_{0,\kappa} \to \phi_0$ strongly in $L^2(\Omega)$ and $\mathcal{E}_{\kappa,\nloc}(\bu_{0,\kappa}, \bo_{0,\kappa}, \phi_{0,\kappa}) \to \mathcal{E}_{\loc}(\bu_0, \bo_0, \phi_0)$ as $\kappa \to 0$. If $(\bu_\kappa, \bo_\kappa, \phi_\kappa, \mu_\kappa)$ is a weak solution to the nMAGG model \eqref{nonl:model:equ} with kernel $K_\kappa(x) = \frac{\gamma_\kappa(|x|)}{|x|^2}$ corresponding to initial data $(\bu_{0,\kappa}, \bo_{0,\kappa}, \phi_{0,\kappa})$, then along a nonrelabelled subsequence $\kappa \to 0$,
\begin{align*}
\bu_\kappa \to \bu & \text{ weakly* in } L^\infty(0,T;\HH_\sigma) \cap L^2(0,T;\HH^1_{\sigma}), \\
\bu_\kappa \to \bu & \text{ strongly in } L^2(0,T;\HH_\sigma) \text{ and a.e.~in } Q, \\
\bo_\kappa \to \bo & \text{ weakly* in } L^\infty(0,T;\HH) \cap L^2(0,T;\HH^1), \\
\bo_\kappa \to \bo & \text{ strongly in } L^2(0,T;\HH) \text{ and a.e.~in } Q, \\
\mu_\kappa \to \mu & \text{ weakly in } L^2(0,T;H^1(\Omega)), \\
\phi_\kappa \to \phi & \text{ strongly in } C^0([0,T];L^2(\Omega)) \text{ and a.e.~in } Q,
\end{align*}
where $(\bu, \bo, \phi, \mu)$ is a weak solution to the MAGG model \eqref{local:model:equ} in the following sense:
\begin{itemize}
\item Regularity
\begin{equation*}
\begin{alignedat}{3}
\bu & \in C_w([0,T]; \HH_\sigma) \cap L^2(0,T;\HH^1_{\sigma}), \\
\bo & \in C_w([0,T]; \HH) \cap L^2(0,T;\HH^1), \\
\phi & \in C_w([0,T];H^1(\Omega)) \cap L^2(0,T;H^2_n(\Omega)) \text{ s.t. }  |\phi| < 1 \text{ a.e.~in } Q, \\
\mu & \in L^2(0,T;H^1(\Omega)).
\end{alignedat}
\end{equation*}
\item Equations
\begin{subequations}\label{weak:loc:1}
\begin{alignat}{2}
\label{weak:loc:u} 0 &=  \langle \pd_t(\rho(\phi) \bu), \bm{v} \rangle_{D(\bm{A}_S)}+ (\rho \bu \otimes \bu, \nabla \bm{v})  + (2 \eta(\phi) \D \bu, \D \bm{v}) \\
\notag & \quad + (2 \eta_r(\phi) \W \bu, \W \bm{v})  - (2\eta_r(\phi) \bo, \curl \bm{v})_Q   - (( \bu \otimes \bm{J}), \nabla \bm{v}) - (\mu \nabla \phi, \bm{v}), \\
\label{weak:loc:w} 0 & = \langle \pd_t(\rho(\phi) \bo), \bm{z} \rangle_{D(\bm{A}_1)} + (\rho \bu \otimes \bo, \nabla \bm{z})+ (c_0(\phi) \div \bo, \div \bm{z})   \\
\notag & \quad + (2 c_d(\phi) \D \bo, \D \bm{z}) + (2 c_a(\phi) \W \bo, \W \bm{z})  - (2 \eta_r(\phi) (\curl \bu - 2 \bo), \bm{z}) \\
\notag & \quad - ((\bo \otimes \bm{J}), \nabla \bm{z}), \\
\label{weak:loc:phi} 0 & = \langle \pd_t\phi, \psi \rangle_{H^1(\Omega)} - (\phi \bu, \nabla \psi) + (m(\phi)\nabla \mu, \nabla \psi), 
\end{alignat}
\end{subequations}
holding for all $\psi \in H^1(\Omega)$, $\bm{v} \in D(\bm{A}_S)$, $\bm{z} \in D(\bm{A}_1)$ and for almost any $t \in (0,T)$, along with 
\begin{subequations}\label{weak:loc:2}
\begin{alignat}{2}
\label{weak:loc:mu} \mu & = - \Delta \phi + F'(\phi) && \quad \text{ a.e.~in } Q, \\[1ex]
\label{weak:loc:J} \bm{J} & = -\tfrac{\overline{\rho}_1 - \overline{\rho}_2}{2} m(\phi) \nabla \mu && \quad \text{ a.e.~in } Q, \\[1ex]
\label{weak:loc:rho} \rho(\phi) & = \tfrac{\overline{\rho}_1 - \overline{\rho}_2}{2} \phi + \tfrac{\overline{\rho}_1+ \overline{\rho}_2}{2} && \quad \text{ a.e.~in } Q.
\end{alignat}
\end{subequations}
\item Energy inequality
\begin{equation}\label{energy:loc:ineq}
\begin{aligned}
& \mathcal{E}_{\loc}(\bu(t), \bo(t), \phi(t)) \\
& \qquad + \int_s^t \int_\Omega m(\phi) |\nabla \mu|^2 + 2 \eta(\phi) |\D \bu|^2 + 4 \eta_r(\phi) | \tfrac{1}{2} \curl \bu - \bo|^2 \, dx \, d\tau \\
& \qquad + \int_s^t \int_\Omega c_0(\phi) |\div \bo|^2 + 2 c_d(\phi) |\D \bo|^2 + 2 c_a(\phi) |\W \bo|^2 \, dx \, d \tau \\
& \quad \leq \mathcal{E}_{\loc}(\bu(s), \bo(s), \phi(s))
\end{aligned}
\end{equation}
holds for all $t \in [s,T)$ and almost all $s \in [0,T)$ including $s = 0$.
\item Initial conditions
\[
(\bu, \bo, \phi) \vert_{t = 0} = (\bu_0, \bo_0, \phi_0).
\]
\end{itemize}
\end{thm}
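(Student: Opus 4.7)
The plan is to derive $\kappa$-uniform a priori estimates from the energy inequality \eqref{nloc:ene:ineq:sec2}, extract weakly and strongly convergent subsequences, and then pass to the limit $\kappa \to 0$ in the weak formulation, following the approach of \cite{AbelsTera} but adapted to accommodate the micro-rotation equation. The convergence assumption $\mathcal{E}_{\kappa,\nloc}(\bu_{0,\kappa},\bo_{0,\kappa},\phi_{0,\kappa}) \to \mathcal{E}_{\loc}(\bu_0,\bo_0,\phi_0)$ makes the right-hand side of \eqref{nloc:ene:ineq:sec2} uniformly bounded in $\kappa$, yielding uniform bounds for $\bu_\kappa$ in $L^\infty(0,T;\HH_\sigma)\cap L^2(0,T;\HH^1_\sigma)$, $\bo_\kappa$ in $L^\infty(0,T;\HH)\cap L^2(0,T;\HH^1)$, $\nabla \mu_\kappa$ in $L^2(Q)$, $F(\phi_\kappa)$ in $L^\infty(0,T;L^1(\Omega))$, and $e_\kappa(\phi_\kappa)$ in $L^\infty(0,T)$. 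Mass conservation from \eqref{weak:form:phi} gives $\overline{\phi_\kappa(t)} = \overline{\phi_{0,\kappa}} \to \overline{\phi_0}$, so that $|\overline{\phi_\kappa(t)}| \leq r < 1$ uniformly for small enough $\kappa$. Testing \eqref{weak:nloc:mu} with $\phi_\kappa - \overline{\phi_\kappa}$ and applying \eqref{F'abs:ineq} then delivers a uniform $L^2(0,T;L^1(\Omega))$ bound on $F'(\phi_\kappa)$, which combined with Poincar\'e's inequality promotes $\mu_\kappa$ to being uniformly bounded in $L^2(0,T;H^1(\Omega))$.

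Next I would derive the time-derivative bounds $\pd_t \phi_\kappa$ in $L^2(0,T;H^1(\Omega)^*)$, $\pd_t(\rho(\phi_\kappa)\bu_\kappa)$ in $L^{4/3}(0,T;D(\bm{A}_S)^*)$ and $\pd_t(\rho(\phi_\kappa)\bo_\kappa)$ in $L^{4/3}(0,T;D(\bm{A}_1)^*)$ exactly as in Step 3 of the proof of Theorem \ref{thm:nMAGG:weaksol}. The crucial compactness for $\phi_\kappa$ proceeds via Lemma \ref{sec2:lem:phi:e:str:con:L2}: the $L^\infty(0,T)$ bound on $e_\kappa(\phi_\kappa)$ makes its first term arbitrarily small upon choosing $\delta$ small, while the time-derivative bound together with Arzel\`a--Ascoli renders $(\phi_\kappa)$ Cauchy in $C^0([0,T];H^1(\Omega)^*)$, controlling its second term. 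This produces $\phi_\kappa \to \phi$ in $C^0([0,T];L^2(\Omega))$ and a.e.~in $Q$, from which \eqref{nloc:to:loc:4} gives $\phi \in L^\infty(0,T;H^1(\Omega))$ and the pointwise convergence preserves $|\phi|<1$ a.e.~in $Q$. Strong convergence of $\bu_\kappa \to \bu$ and $\bo_\kappa \to \bo$ in $L^2(Q)$, and by interpolation in $L^{6-\gamma}(Q)$, follows from the scheme in Lemma \ref{lem:exist:weaksol} applied to $\rho(\phi_\kappa)\bu_\kappa$ and $\rho(\phi_\kappa)\bo_\kappa$, using the strong convergence of $\rho(\phi_\kappa)^{\pm 1/2}$ in all $L^q(Q)$, $q<\infty$, coming from the a.e.~convergence of $\phi_\kappa$.

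With these convergences, passing to the limit in \eqref{weak:form:bu:sec2}--\eqref{weak:form:phi} and in \eqref{weak:nloc:J}--\eqref{weak:nloc:rho} proceeds as in the passage $\delta \to 0$ of Section \ref{sec:weak}; the only genuinely new ingredient is the identification of the chemical potential. Here Lemma \ref{sec2:lem:nloc:to:loc:ene} gives
\begin{equation*}
\int_Q (a_\kappa \phi_\kappa - K_\kappa \star \phi_\kappa)\, \zeta \, dx\, dt \to \int_Q \nabla \phi \cdot \nabla \zeta \, dx\, dt \quad \text{for all } \zeta \in L^2(0,T;H^1(\Omega)),
\end{equation*}
while the a.e.~convergence $F'(\phi_\kappa) \to F'(\phi)$ combined with the $L^1(Q)$ bound on $F'(\phi_\kappa)$ and Fatou's lemma yield $F'(\phi) \in L^1(Q)$. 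Writing $F'(\phi_\kappa) = \mu_\kappa - (a_\kappa \phi_\kappa - K_\kappa \star \phi_\kappa)$ identifies the distributional limit, and together with the a.e.~convergence this produces $\mu = -\Delta \phi + F'(\phi)$ in $L^2(0,T;H^1(\Omega)^*)$. Elliptic regularity for $-\Delta \phi = \mu - F'(\phi)$ with zero Neumann data then promotes $\phi$ to $L^2(0,T;H^2_n(\Omega))$, making \eqref{weak:loc:mu} an a.e.~identity in $Q$.

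Finally, the energy inequality \eqref{energy:loc:ineq} follows by applying Lemma \ref{lem:Energyineq} to \eqref{nloc:ene:ineq:sec2} against any non-negative $w\in W^{1,1}(0,T)$ with $w(T)=0$, combining weak lower semicontinuity of the dissipation terms, pointwise-in-time semicontinuity \eqref{nloc:to:loc:4} for the nonlocal Dirichlet part of $G_{\kappa,\nloc}$, the continuity of $F$ composed with the a.e.~convergence of $\phi_\kappa$ together with Fatou, and the initial-energy convergence assumption. The attainment of initial data is handled via the weak time continuity of $\rho(\phi)\bu$ and $\rho(\phi)\bo$, as in Lemma \ref{lem:exist:weaksol}. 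The main obstacle I foresee is producing strong convergence strong enough to pass to the limit in the convective and relative-flux terms $\rho(\phi_\kappa) \bu_\kappa \otimes \bu_\kappa$, $\rho(\phi_\kappa) \bu_\kappa \otimes \bo_\kappa$, $\bm{J}_\kappa \otimes \bu_\kappa$ and $\bm{J}_\kappa \otimes \bo_\kappa$, where $\bm{J}_\kappa$ is only weakly convergent in $L^2(Q)$; this is precisely why the $L^{6-\gamma}(Q)$ strong convergence of $\bu_\kappa$ and $\bo_\kappa$ obtained above is needed, in parallel with the role it plays in Lemma \ref{lem:exist:weaksol}.
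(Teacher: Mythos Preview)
Your overall strategy mirrors the paper's proof closely, and most of the steps---the energy bounds, the compactness for $\phi_\kappa$ via Lemma~\ref{sec2:lem:phi:e:str:con:L2}, the strong $L^2(Q)$ convergence of $\bu_\kappa,\bo_\kappa$ via the $\rho^{1/2}$ trick, and the passage to the limit in the convective and flux terms---are correct and match what the paper does.

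There is, however, a genuine gap in your identification of the chemical potential. You invoke Lemma~\ref{sec2:lem:nloc:to:loc:ene} to claim
\[
\int_Q (a_\kappa \phi_\kappa - K_\kappa \star \phi_\kappa)\,\zeta\,dx\,dt \;\longrightarrow\; \int_Q \nabla\phi\cdot\nabla\zeta\,dx\,dt,
\]
but \eqref{nloc:to:loc:conv1} is stated for a \emph{fixed} $\phi\in H^1(\Omega)$, not for a sequence $\phi_\kappa$ varying with $\kappa$. Here $\phi_\kappa$ is not bounded in $H^1$ uniformly in $\kappa$; only $e_\kappa(\phi_\kappa)$ is. Since typically $a_\kappa\to\infty$, you cannot split off $(a_\kappa(\phi_\kappa-\phi),\zeta)$ and absorb it with the strong $L^2$ convergence of $\phi_\kappa$. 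Relatedly, your route gives only $F'(\phi)\in L^1(Q)$ via Fatou, which is insufficient for the elliptic regularity step $-\Delta\phi=\mu-F'(\phi)\in L^2(Q)$ that you need to reach $\phi\in L^2(0,T;H^2_n(\Omega))$.

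The paper closes this gap with one extra a priori estimate and a convexity argument. Testing \eqref{weak:nloc:mu} with $F_1'(\phi_\kappa)$ and using that the monotonicity of $F_1'$ makes the nonlocal cross-term nonnegative yields a uniform bound on $F_1'(\phi_\kappa)$ in $L^2(Q)$; by comparison, $a_\kappa\phi_\kappa-K_\kappa\star\phi_\kappa$ is then also bounded in $L^2(Q)$ and has a weak limit $\zeta$. The identification $\zeta=-\Delta\phi$ comes from the convexity of $e_\kappa$: for any $\psi\in L^2(0,T;H^1(\Omega))$ one has $e_\kappa(\psi)\geq e_\kappa(\phi_\kappa)+(a_\kappa\phi_\kappa-K_\kappa\star\phi_\kappa,\psi-\phi_\kappa)$, and passing to the limit using \eqref{nloc:to:loc:conv1} (now legitimately, with fixed $\psi$) together with \eqref{nloc:to:loc:4} gives $e_0(\psi)\geq e_0(\phi)+(\zeta,\psi-\phi)$, which characterises $\zeta$ as the subdifferential of $e_0$ at $\phi$. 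This route also delivers $F'(\phi)\in L^2(Q)$ directly, which is what you need for the $H^2$ regularity of $\phi$.
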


\begin{proof}
We adapt the arguments in \cite{AbelsTera} for the nonlocal-to-local weak convergence of the AGG model.\\

\noindent Step 1 (Uniform estimates): From the energy inequality \eqref{nloc:ene:ineq:sec2} and the assumptions on the initial data, there exists a positive constant $C$ independent of $\kappa$ such that 
\begin{align*}
& \|e_\kappa(\phi_\kappa)\|_{L^\infty(0,T)} + \| \bu_{\kappa} \|_{L^\infty(0,T;\HH_\sigma) \cap L^2(0,T;\HH^1_{\sigma})}\\
& \quad + \| \bo_\kappa \|_{L^\infty(0,T;\HH) \cap L^2(0,T;\HH^1)} + \| \nabla \mu_\kappa \|_{L^2(0,T;L^2)} \leq C.
\end{align*}
As $|\phi_\kappa| < 1$ a.e.~in $Q$, it holds that $\phi_\kappa$ is bounded in $L^\infty(0,T;L^2(\Omega))$, as well as
\[
\| \pd_t \phi_\kappa \|_{L^2(0,T;H^1(\Omega)^*)} \leq  \| \nabla \mu_\kappa \|_{L^2(0,T;L^2)} + \| \phi_\kappa \bu _\kappa \|_{L^2(0,T;L^2)} \leq C.
\]
Since $\overline{\phi}_\kappa(t) = \overline{\phi}_0$ for all $t \in (0,T]$, we consider for a.e.~$t \in (0,T)$, the unique solution $q_{\kappa}(t) \in H^1_{(0)}(\Omega)$, i.e., $q_\kappa(t) \in H^1(\Omega)$ with $\overline{q}_\kappa(t) = 0$, satisfying
\[
(m(\phi_\kappa(t)) \nabla q_\kappa(t) \cdot \nabla \zeta) = (\phi_\kappa(t) - \overline{\phi}_0, \zeta) \quad \forall \zeta \in H^1(\Omega).
\]
As $m$ is strictly bounded from below, we find that there exists a positive constant $C$ independent of $\kappa$ such that 
\[
\|q_\kappa(t) \|_{H^1} \leq C \| \phi_\kappa(t) - \overline{\phi}_0 \| \leq C.
\]
In particular, $q_\kappa \in L^\infty(0,T;H^1_{(0)}(\Omega))$. Then, choosing $\psi = q_\kappa(t)$ in \eqref{weak:form:phi} and testing \eqref{weak:nloc:mu} with $\phi_\kappa(t) - \overline{\phi}_0$ yields after summing:
\begin{align*}
0 & = \langle \pd_t \phi_\kappa, q_\kappa \rangle_{H^1(\Omega)} - (\phi_\kappa \bu_\kappa, \nabla q_\kappa) + (\phi_\kappa - \overline{\phi}_0, \mu_\kappa) \\
& = \langle \pd_t \phi_\kappa, q_\kappa \rangle_{H^1(\Omega)} - (\phi_\kappa \bu_\kappa, \nabla q_\kappa) + (F_1'(\phi_\kappa), \phi_\kappa - \overline{\phi}_0) + (a_\kappa \phi_\kappa - K_\kappa \star \phi_\kappa + F_2'(\phi_\kappa), \phi_\kappa - \overline{\phi}_0) \\
& = \langle \pd_t \phi_\kappa, q_\kappa \rangle_{H^1(\Omega)} - (\phi_\kappa \bu_\kappa, \nabla q_\kappa) + (F_1'(\phi_\kappa), \phi_\kappa - \overline{\phi}_0) + 2e_\kappa(\phi_\kappa) + (F_2'(\phi_\kappa), \phi_\kappa - \overline{\phi}_0),
\end{align*}
where we have used the relation
\begin{align*}
& (a_\kappa \phi_\kappa - K_\kappa \star \phi_\kappa, \phi_\kappa - \overline{\phi}_0) \\
& \quad = \int_\Omega (K_\kappa \star 1) |\phi_\kappa|^2   - (K_\kappa \star \phi_\kappa) \phi_\kappa \, dx - \overline{\phi}_0 \int_\Omega (K_\kappa \star 1) \phi_\kappa - K_\kappa \star \phi_\kappa \, dx \\
& \quad = \frac{1}{2} \int_\Omega \int_\Omega K_\kappa(x-y) (\phi_\kappa(x) - \phi_\kappa(y))^2 \, dx \, dy = 2 e_\kappa(\phi_\kappa)
\end{align*}
thanks to the symmetry of $K_\kappa$. Invoking \eqref{F'abs:ineq} with $r = \overline{\phi}_0 \in (-1,1)$ and $s = \phi_\kappa$, the continuity of $F_2'$ and the boundedness of $\phi_\kappa$, there exist constants $c_1, C > 0$ and $c_2 \geq 0$ such that 
\begin{align*}
2 e_\kappa(\phi_\kappa) + c_1 \| F_1'(\phi_\kappa) \|_{L^1} \leq c_2 + (\| \pd_t \phi_\kappa \|_{H^1(\Omega)^*} + \| \bu_\kappa \| \big )\| q_\kappa \|_{H^1} + C.
\end{align*}
Neglecting the non-negative term $e_\kappa(\phi_\kappa)$ on the left-hand side we find that $F_1'(\phi_\kappa)$ is bounded in $L^2(0,T;L^1(\Omega))$, whence the mean value of $\mu_\kappa$ can now be estimated as
\begin{align*}
    \|\overline{\mu}_\kappa \|_{L^2(0,T)} \leq C \| F'(\phi_\kappa) \|_{L^2(0,T;L^1(\Omega))} \leq C.
\end{align*}
By the Poincar\'e inequality we then deduce that 
\[
\| \mu_\kappa \|_{L^2(0,T;H^1(\Omega))} \leq C.
\]
Then, testing \eqref{weak:nloc:mu} with $F_1'(\phi_\kappa)$ and using the symmetry of $K_\kappa$:
\begin{align*}
& \| F_1'(\phi_\kappa) \|^2 + \frac{1}{2} \int_\Omega \int_\Omega K_\kappa(x-y) (\phi_\kappa(x) - \phi_\kappa(y))(F_1'(\phi_\kappa(x)) - F_1'(\phi_\kappa(y))) \, dy \, dx \\
& \quad = (\mu_\kappa - F_2'(\phi_\kappa), F_1'(\phi_\kappa)) \leq \frac{1}{2} \| F_1'(\phi_\kappa) \|^2 + C(\| \mu_\kappa \|^2 + \| F_2'(\phi_\kappa) \|^2).
\end{align*}
By the monotonicity of $F_1'$, the second term on the left-hand side is non-negative.  Then, integrating over $(0,T)$ yields
\[
\| F_1'(\phi_\kappa) \|_{L^2(Q)} \leq C.
\]
By a comparison of terms in \eqref{weak:nloc:mu} we then find that 
\[
\| a_\kappa \phi_\kappa - K_\kappa\star \phi_\kappa \|_{L^2(Q)} \leq C.
\]
From \eqref{SobEmd2} we have $L^\infty(0,T;L^2(\Omega)) \cap L^2(0,T;H^1(\Omega)) \subset L^r(0,T;H^{2/r}(\Omega))$ and for $r = \frac{10}{3}$ it holds that 
\[
\| \bu_\kappa \|_{L^{\frac{10}{3}}(Q)} + \| \bo_\kappa \|_{L^{\frac{10}{3}}(Q)} \leq C.
\]
We now focus on the time derivative $\pd_t(\rho(\phi_\kappa) \bo_\kappa)$ and remark that the argument for $\pd_t(\rho(\phi_\kappa)\bu_\kappa)$ is similar. From the boundedness of $\bm{J}_\kappa := -\rho'(\phi_\kappa) m(\phi_\kappa) \nabla \mu_\kappa$ in $L^2(Q)^d$ we first infer that $\bm{J}_\kappa \otimes \bo_\kappa$ is bounded in $L^{\frac{8}{7}}(0,T;L^{\frac{4}{3}}(\Omega)^{d \times d})$. Then, with the boundedness of $\rho(\phi_\kappa) \bu_\kappa \otimes \bo_\kappa$ in $L^2(0,T;L^{3/2}(\Omega)^{d \times d})\cap L^{\frac{5}{3}}(Q)^{d \times d}$ and of $\bo_\kappa$ in $L^2(0,T;\HH^1)$ we deduce that 
\[
\| \pd_t(\rho(\phi_\kappa) \bo_\kappa) \|_{L^{\frac{8}{7}}(0,T;(W^{1,4}(\Omega)^d)^*)} \leq C.
\]

\noindent Step 2 (Compactness): The weak/weak* compactness assertions for $\bu_\kappa$, $\bo_\kappa$, $\mu_\kappa$ follow from the application of the Banach--Alaoglu theorem. By the compact embedding of $L^2(\Omega) \Subset H^1(\Omega)^*$ and the Aubin--Lions lemma, we have that $\phi_\kappa \to \phi$ strongly in $C^0([0,T];H^1(\Omega)^*)$.  Then, Lemma \ref{sec2:lem:phi:e:str:con:L2} and the boundedness of $e_\kappa(\phi_\kappa)$ yields the strong convergence of $\phi_\kappa$ in $C^0([0,T];L^2(\Omega))$.

Since the compactness arguments for $(\bu_\kappa, \phi_\kappa, \mu_\kappa)$ can be found in \cite{AbelsTera}, let us focus more on the micro-rotation $\bo_\kappa$. Denoting by $(\bu, \bo, \phi, \mu)$ the corresponding weak limits of $(\bu_\kappa, \bo_\kappa, \phi_\kappa, \mu_\kappa)$ along a nonrelablled subsequence $\kappa \to 0$, we first observe the weak/weak* convergence $\bo_\kappa \to \bo$ in $L^\infty(0,T;\HH) \cap L^2(0,T;\HH^1) \cap L^{\frac{10}{3}}(Q)$. Combining with the strong convergence $\rho(\phi_\kappa) \to \rho(\phi)$ in $L^q(Q)$ for any $q \in [2,\infty)$ we deduce, in a similar fashion as in the proof of Lemma \ref{lem:exist:weaksol}, that
\begin{equation*}
\begin{alignedat}{3}
   \rho(\phi_\kappa) \bo_\kappa & \to \rho(\phi) \bo && \quad \text{ weakly in } L^2(Q), \\
   \bo_\kappa & \to \bo && \quad \text{ strongly in } L^{\frac{10}{3}-r}(Q) \text{ for } r \in (0,\tfrac{7}{3}] \text{ and a.e.~in } Q.
\end{alignedat}
\end{equation*}
From the compact embedding of $\HH \Subset (\HH^1)^*$ and the continuous embedding $(\HH^1)^* \subset (W^{1,4}(\Omega)^d)^*$, the Aubin--Lions lemma asserts the strong convergence $\rho(\phi_\kappa) \bo_\kappa \to \rho(\phi) \bo$ in $L^2(0,T;(\HH^1)^*)$. Together with the weak convergence of $\bo_\kappa$ to $\bo$ in $L^2(0,T;\HH^1)$ we have the norm convergence of $(\rho(\phi_\kappa))^{1/2} \bo_\kappa$ to $(\rho(\phi))^{1/2} \bo$ in $L^2(Q)^d$. This then yields the strong convergence of $(\rho(\phi_\kappa))^{1/2} \bo_\kappa$ in $L^2(Q)^d$, and subsequently
\[
\bo_\kappa \to \bo \text{ strongly in } L^2(Q)^d.
\]
By a similar argument we have $\bu_\kappa \to \bu$ strongly in $L^2(Q)^d$. From these we obtain the strong convergence of $\bu_\kappa \otimes \bo_\kappa \to \bu \otimes \bo$ in $L^1(Q)^{d \times d}$, and thus $\rho(\phi_\kappa) \bu_\kappa \otimes \bo_\kappa \to \rho(\phi_\kappa) \bu \otimes \bo$ weakly in $L^{\frac{5}{3}-s}(Q)^{d \times d}$ for $s \in (0,\frac{2}{3}]$, as well as $\bm{J}_\kappa \otimes \bo_\kappa \to \bm{J} \otimes \bo$ weakly in $L^1(Q)^{d \times d}$, where $\bm{J} = -\rho'(\phi) m(\phi) \nabla \mu$. Then, we can pass to the limit $\kappa \to 0$ in \eqref{weak:form:bo:sec2} for $(\bu_\kappa, \bo_\kappa, \phi_\kappa, \mu_\kappa)$ to obtain the analogue of \eqref{weak:form:bo:sec2} for the limit functions. Passing to the limit $\kappa \to 0$ in the Navier--Stokes equations \eqref{weak:form:bu:sec2} and the phase field equation \eqref{weak:form:phi} can be done in a similar fashion. 

It remains to study the limit of \eqref{weak:nloc:mu} as $\kappa \to 0$. The boundedness of $F_1'(\phi_\kappa)$ and $a_\kappa \phi_\kappa - K_\kappa \star \phi_\kappa$ in $L^2(Q)$ yield limit functions $\xi$ and $\zeta$ such that $F_1'(\phi_\kappa) \to \xi$ weakly and $a_\kappa \phi_\kappa - K_\kappa \star \phi_\kappa \to \zeta$ weakly in $L^2(Q)$. From the strong convergence of $\phi_\kappa \to \phi$ in $C^0([0,T];L^2(\Omega))$ we have $F_1'(\phi_\kappa) \to F_1'(\phi)$ and $F_2'(\phi_\kappa) \to F_2'(\phi)$ almost everywhere in $Q$. The former enables us to identify $\xi$ as $F_1'(\phi)$ via a standard argument, see e.g.~\cite[p.~1093]{AbelsBosia}, while the latter provides $F_2'(\phi_\kappa) \to F_2'(\phi)$ strongly in $L^2(Q)$. Hence, passing to the limit $\kappa \to 0$ in \eqref{weak:nloc:mu} leads to 
\[
\mu = \zeta + F'(\phi) \text{ a.e.~in } Q.
\]
In order to identify $\zeta$ we apply \eqref{nloc:to:loc:4} of Lemma \ref{sec2:lem:nloc:to:loc:ene} and recall the uniform boundedness of $e_\kappa(\phi_\kappa)$ in $L^\infty(0,T)$ to deduce that 
\[
\| e_0(\phi) \|_{L^\infty(0,T)} \leq \liminf_{\kappa \to 0} \| e_\kappa(\phi_\kappa)\|_{L^\infty(0,T)} \leq C.
\]
From this we see that $\phi \in L^\infty(0,T;H^1(\Omega))$. Furthermore, for any $\psi \in L^2(0,T;H^1(\Omega))$, the convexity of $e_\kappa(\cdot)$ yields that
\[
e_\kappa(\psi) \geq e_\kappa(\phi_\kappa) + \int_\Omega (a_\kappa \phi_\kappa- (K_\kappa \star \phi_\kappa)) (\psi - \phi_\kappa) \, dx.
\]
Integrating over $(0,T)$ and using the strong convergence of $\phi_\kappa \to \phi$ in $C^0([0,T];L^2(\Omega))$ and the weak convergence of $a_\kappa \phi_\kappa - K_\kappa \star \phi_\kappa \to \zeta$ in $L^2(Q)$, we have by \eqref{nloc:to:loc:conv1} and \eqref{nloc:to:loc:4} that
\[
\int_0^T \frac{1}{2} \|\nabla \psi\|^2 - \frac{1}{2} \|\nabla \phi\|^2 \, dt =  \int_0^T e_0(\psi) \, dt - \int_0^T e_0(\phi) \, dt \geq \int_0^T \int_\Omega \zeta (\psi - \phi) \, dx \, dt.
\]
As $\psi \in L^2(0,T;H^1(\Omega))$ is arbitrary, choosing $\psi = \phi + h \chi(t) \varphi(x)$ for $h \in \R$, $\chi \in C^0([0,T])$ and $\varphi \in H^1(\Omega)$ yields after sending $h \to 0$:
\begin{align*}
\int_0^T \chi(t) \int_\Omega \nabla \phi \cdot \nabla \varphi \, dx \, dt = \int_0^T \chi(t) \int_\Omega \zeta \varphi \, dx \, dt.
\end{align*}
This shows that $\zeta = - \Delta \phi$ a.e.~in $Q$ and by elliptic regularity we have $\phi \in L^2(0,T;H^2_n(\Omega))$.

To finish, the energy inequality \eqref{energy:loc:ineq} can be obtained by passing to the limit $\kappa \to 0$ in \eqref{nloc:ene:ineq:sec2}, while utilising the assumption on the initial energy $\mathcal{E}_{\kappa,\nloc}(\bu_{0,\kappa}, \bo_{0,\kappa}, \phi_{0,\kappa}) \to \mathcal{E}_{\loc}(\bu_0, \bo_0, \phi_0)$, the weak lower semicontinuity of the norms, as well as the properties $\bu_\kappa(t) \to \bu(t)$ in $\HH_\sigma$, $\bo_\kappa(t) \to \bo(t)$ in $\HH$ and $\phi_{\kappa}(t) \to \phi(t)$ in $L^2(\Omega)$ for a.e.~$t \in (0,T)$.
\end{proof}

\section{Strong well-posedness in two dimensions}\label{sec:str}
In this section we consider the two-dimensional setting $d = 2$ and set the mobility $m(\phi)$ to be a constant, which we take to be equal to unity. To establish global strong solutions to the nMAGG model \eqref{nonlocal:model:equ:2D}, we follow the approach of \cite{CCGAGMG}, which utilises a similar strategy to \cite{CHNS3Dstrsol,CHNS2Dstrsol} for the AGG model and to \cite{ChanLamstrsol} for the MAGG model. The result is formulated as follows.

\begin{thm}\label{thm:strwellposed}
Suppose that \eqref{gen:ass1}--\eqref{gen:ass3} and \eqref{sing:pot:ass1}--\eqref{sing:pot:ass6} hold, along with $\bu_0 \in \HH^1_{\sigma}$, $\omega_0 \in H^1_0(\Omega)$ and $\phi_0 \in H^1(\Omega)$ with $|\overline{\phi}_0| < 1$, $F'(\phi_0) \in L^2(\Omega)$ and $F''(\phi_0) \nabla \phi_0 \in L^2(\Omega;\R^2)$. Then, for a fixed but arbitrary $T > 0$, there exists a global strong solution $(\bu, \bo, p, \phi, \mu)$ to the nMAGG model on the time interval $[0,T]$ in the following sense:
\begin{itemize}
    \item Regularity
    \begin{align*}
\bu & \in C([0,T];\HH^1_{\sigma}) \cap L^2(0,T;\HH^2_{\sigma}) \cap H^1(0,T;\HH_\sigma), \\
\omega & \in C([0,T];\HH^1) \cap L^2(0,T;\HH^2) \cap H^1(0,T;\HH), \\
p & \in L^2(0,T;H^1_{(0)}(\Omega)), \\
\phi & \in C_w([0,T];H^1(\Omega)) \cap L^{\frac{2p}{p-2}}(0,T;W^{1,p}(\Omega)), \quad \forall p \in (2,\infty), \\
\phi & \in L^\infty(0,T;L^\infty(\Omega)) \text{ such that } |\phi(x,t)| < 1 \text{ for a.e.~} x \in \Omega, \, \forall t \in [0,T], \\
\pd_t \phi & \in L^\infty(0,T;H^1(\Omega)^*) \cap L^2(0,T;L^2(\Omega)), \\
F'(\phi) & \in L^\infty(0,T;H^1(\Omega)), \\
\mu & \in C_w([0,T];H^1(\Omega)) \cap L^2(0,T;H^2(\Omega)) \cap H^1(0,T;H^1(\Omega)^*).
\end{align*}
\item $(\bu, \omega, p, \phi, \mu)$ satisfy \eqref{nonlocal:model:equ:2D} a.e.~in $\Omega \times (0,T)$ with mobility $m = 1$ and the boundary condition $\pdnu \mu = 0$ almost everywhere on $\pd \Omega \times (0,T)$.
\item Initial conditions
\[
(\bu, \omega, \phi) \vert_{t=0} = (\bu_0, \omega_0, \phi_0).
\]
\item If there exists $\delta_0 \in (0,1)$ such that $\| \phi_0 \|_{L^\infty(\Omega)} \leq 1-\delta_0$, then $(\bu, \omega,  \phi )$ is the unique strong solution and depends continuously on the initial data in the interval $[0,T]$.
\end{itemize}
\end{thm}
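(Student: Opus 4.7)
The plan is to mirror the semi-Galerkin strategy used in \cite{CCGAGMG} for the nAGG model, treating the extra micro-rotation equation for $\omega$ in parallel with the velocity equation since it is of the same parabolic type (convection-diffusion with a lower-order forcing). I would introduce a Faedo--Galerkin discretization only for $\bu$ and $\omega$, projecting onto the eigenspaces of the Stokes operator $\bm{A}_S$ and the Dirichlet Laplacian $\bm{A}_1$, while keeping the full nonlocal convective Cahn--Hilliard component \eqref{2D:nonlocal:model:equ:CH}--\eqref{2D:nonlocal:model:equ:mu} as a PDE to be solved exactly at each Galerkin level. With $m\equiv 1$ the chemical potential in \eqref{nonlocal:model:equ:mu} is independent of $m(\phi)\nabla\mu$-type terms that would destroy monotonicity, so given an approximate velocity $\bu_n$ one obtains $(\phi_n,\mu_n)$ via the standard theory for the nonlocal Cahn--Hilliard equation with transport; the solvability on each Galerkin level follows by a fixed point argument as in \cite{CCGAGMG}.

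The core of the proof is a sequence of $\kappa$-dependent a priori estimates that are uniform in the Galerkin index $n$. First I would reproduce the basic energy identity along the lines of \eqref{Gal:energyId}, which controls $\bu_n$ in $L^\infty(0,T;\HH_\sigma)\cap L^2(0,T;\HH^1_\sigma)$, $\omega_n$ in $L^\infty(0,T;\HH)\cap L^2(0,T;\HH^1)$, $\nabla\mu_n$ in $L^2(Q)$ and $F(\phi_n)$ in $L^\infty(0,T;L^1(\Omega))$, together with $|\phi_n|<1$ a.e. Next, I would derive the higher-order bounds specific to the 2D setting. Testing the $\bu_n$-equation with $\bm{A}_S\bu_n$ and the $\omega_n$-equation with $\bm{A}_1\omega_n$ (resp.\ with $\partial_t\bu_n$, $\partial_t\omega_n$), and using Ladyzhenskaya's inequality $\|\bm{v}\|_{L^4}^2\lesssim\|\bm{v}\|\,\|\nabla\bm{v}\|$ and Agmon's inequality together with the convective terms treated as in the 2D Navier--Stokes theory, one obtains bounds on $\bu_n$ in $L^\infty(0,T;\HH^1_\sigma)\cap L^2(0,T;\HH^2_\sigma)$ and on $\omega_n$ in $L^\infty(0,T;\HH^1)\cap L^2(0,T;\HH^2)$, together with $\partial_t\bu_n,\partial_t\omega_n\in L^2(0,T;L^2)$. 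The coupling terms $2\curl_1(\eta_r(\phi)\omega)$ and $2\eta_r(\phi)(\curl_2\bu-2\omega)$ are lower order and absorbed via Gronwall since $\eta_r\in W^{1,\infty}$ and $\phi_n$ is bounded.

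The most delicate part — and what I expect to be the main obstacle — is the regularity of the phase field. One tests the nonlocal Cahn--Hilliard equation with $\partial_t\mu_n$ (or equivalently uses the relation $\partial_t\mu_n=a\partial_t\phi_n-K\star\partial_t\phi_n+F''(\phi_n)\partial_t\phi_n$) to obtain, after absorbing the transport term using the $\bu_n$-bounds just established, a uniform estimate on $F'(\phi_n)$ in $L^\infty(0,T;H^1(\Omega))$ and on $\partial_t\phi_n$ in $L^\infty(0,T;H^1(\Omega)^*)\cap L^2(0,T;L^2(\Omega))$. This in turn promotes $\mu_n$ to $L^\infty(0,T;H^1)\cap L^2(0,T;H^2)$ via the nonlocal identity. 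The key technical input is assumption \eqref{sing:pot:ass6}: the growth bound $F''(s)\le Ce^{C|F'(s)|^\beta}$ with $\beta\in[1,2)$ combined with a Trudinger--Moser-type inequality in 2D allows to close the estimate on $F''(\phi_n)\nabla\phi_n$ in $L^2$, and the monotonicity/lower bound $F''(1-2\delta)h(\delta)\ge 1$ delivers the strict separation $|\phi_n|<1$ a.e.; this is where working in two spatial dimensions is indispensable, and where the semi-Galerkin design (rather than a full Galerkin on $\phi$) pays off because $F'(\phi_n)$ is well defined and lies in $H^1$. From the bound $\phi_n\in L^\infty(0,T;L^\infty(\Omega))$ and elliptic regularity applied to the nonlocal relation between $\mu_n,\phi_n$, one then deduces $\phi_n\in L^{2p/(p-2)}(0,T;W^{1,p}(\Omega))$ for any $p\in(2,\infty)$.

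With these bounds in hand, passing to the limit $n\to\infty$ is standard: Aubin--Lions yields strong convergence of $\bu_n,\omega_n,\phi_n$ sufficient to identify all nonlinearities (including $F'(\phi_n)$ by monotonicity plus a.e.\ convergence), and the existence of the pressure $p\in L^2(0,T;H^1_{(0)}(\Omega))$ follows from de Rham's theorem applied to the limiting momentum equation. Attainment of initial data follows from the continuity-in-weak-topology statements. For uniqueness and continuous dependence under the hypothesis $\|\phi_0\|_{L^\infty}\le 1-\delta_0$, I would take the difference of two solutions $(\bu^i,\omega^i,\phi^i,\mu^i)$, $i=1,2$, test the velocity difference with itself, the micro-rotation difference with itself, and the Cahn--Hilliard difference with $A_N^{-1}$ applied to $\phi^1-\phi^2$; the strict separation $|\phi^i|\le 1-\delta'$ (which propagates in time by the instantaneous separation argument plus the $\delta_0$ initial control) ensures that $F''(\phi^i)$ is bounded, killing the singular contributions, so that a Gronwall argument yields uniqueness and Lipschitz dependence on the data in the natural norm.
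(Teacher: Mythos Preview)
Your plan matches the paper's approach closely: semi-Galerkin on $(\bu,\omega)$ via eigenfunctions of $\bm{A}_S$ and the Dirichlet Laplacian, the full nonlocal Cahn--Hilliard solved at each level via Lemma~\ref{sec2prere:str:sol:reg:CH}, a Schauder fixed point (Proposition~\ref{E&U:Gal:app:str:sol:nloc:MAGG}), then the same higher-order tests ($\pd_t\bu_n$, $\pd_t\omega_n$, $\bm{A}_S\bu_n$, $-\Delta\omega_n$) closed by Gronwall. Two points where your sketch diverges from what the paper actually does deserve flagging.

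First, when testing \eqref{equ:bu:Gal:app:2Dstr} with $\bm{A}_S\bu_n=-\Delta\bu_n+\nabla p_n$ in a bounded domain, the variable viscosity produces pressure terms of the form $(p_n\,\eta'(\phi_n)\nabla\phi_n,\bm{A}_S\bu_n)$ and $(p_n\,\eta_r'(\phi_n)\nabla\phi_n,\bm{A}_S\bu_n)$ (the terms $B_{18},B_{19}$ in \eqref{equ:bu:H2:est:2Dstr}) that do not vanish; the paper controls these through the dedicated $L^4$ pressure interpolation estimate of Lemma~\ref{L4:pre:estim}, which you do not mention and which is not part of ``standard 2D Navier--Stokes theory''. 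Second, for uniqueness the paper tests the Cahn--Hilliard difference with $\phi=\phi_1-\phi_2$ itself, not with $A_N^{-1}\phi$: in the nonlocal setting this produces the coercive contribution $((a+F''(\phi_1))\nabla\phi,\nabla\phi)\geq\hat\alpha\|\nabla\phi\|^2$ via \eqref{sing:pot:ass5} (see \eqref{uniq:id4} and \eqref{2Dstrsol:uni:ineq}), and this $\|\nabla\phi\|^2$ dissipation is precisely what absorbs the many gradient-of-$\phi$ terms generated by the capillary force \eqref{uniq:id3} and the convective couplings. Your $A_N^{-1}$ test is the natural choice for the \emph{local} Cahn--Hilliard equation, but here it only yields $\|\phi\|^2$ as dissipation, which does not obviously suffice to close against those terms. (Also, ``$\kappa$-dependent'' in your second paragraph is a slip: $\kappa$ is the nonlocal-to-local parameter of Section~\ref{sec:weakconv} and plays no role in this theorem.)
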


\begin{remark}
The assertions of Theorem \ref{thm:strwellposed} also hold if we consider periodic boundary conditions, i.e., $\Omega = \mathbb{T}^2$ is the 2-torus.
\end{remark}

Let us report on two essential 
preliminary results from \cite{CCGAGMG}. The first is a $L^4$-estimate for the pressure in the stationary Stokes problem. 

\begin{lem}[cf.~Lemma 3.1 of \cite{CCGAGMG}] \label{L4:pre:estim}
Let $\Omega \subset \R^{2}$ be a bounded domain with $C^2$ boundary. For $\bm{f} \in \HH_\sigma$, consider the solution $(\bu, p) \in \HH^2_{\sigma} \times H^1_{(0)}(\Omega)$ to the Stokes problem 
\begin{align*}
- \Delta \bu + \nabla p & = \bm{f} \text{ in } \Omega, \\
\bu & = \bm{0} \text{ on } \pd \Omega.
\end{align*}
Then, there exists a positive constant $C$ such that
\begin{equation}\label{pressure:L4}
\| p \|_{L^{4}(\Omega)} \leq C\| \nabla \bm{A}_S^{-1}\bm{f} \|^{\frac{1}{2}}\| \bm{f} \|^{\frac{1}{2}}, \quad \forall \bm{f} \in \HH_\sigma,
\end{equation}
where $\bm{A}_S$ is the Stokes operator with no-slip boundary conditions.
\end{lem}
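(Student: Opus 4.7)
The plan is to split the $L^4$-bound on $p$ via a 2D Gagliardo--Nirenberg interpolation into an $L^2$ and a gradient bound, and then control each factor through Stokes regularity and a duality test against a Poisson potential. Since $p$ has zero mean, the 2D Ladyzhenskaya inequality yields
\[
\|p\|_{L^4(\Omega)}^2 \leq C \|p\|_{L^2(\Omega)} \|\nabla p\|_{L^2(\Omega)},
\]
reducing the claim to the two auxiliary bounds $\|\nabla p\|_{L^2} \leq C\|\bm{f}\|$ and $\|p\|_{L^2} \leq C\|\nabla\bm{A}_S^{-1}\bm{f}\|$.

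The $H^1$-bound is immediate: with $\bu := \bm{A}_S^{-1}\bm{f} \in \HH^2_\sigma$, the Stokes regularity estimate \eqref{Stokes:reg} gives $\|\bu\|_{\HH^2_\sigma} \leq C_{S,2}\|\bm{f}\|$, so $\|\Delta\bu\|_{L^2} \leq C\|\bm{f}\|$, and reading $\nabla p = \bm{f} + \Delta\bu$ off the momentum equation yields $\|\nabla p\|_{L^2} \leq C\|\bm{f}\|$.

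The $L^2$-bound is the heart of the proof. The plan is to introduce $q_0 \in H^2_n(\Omega)$ as the unique zero-mean weak solution of the Neumann Poisson problem $-\Delta q_0 = p$ in $\Omega$, $\pdnu q_0 = 0$ on $\pd\Omega$, for which $C^2$-elliptic regularity gives $\|q_0\|_{H^2} \leq C\|p\|_{L^2}$. Taking the $L^2$-inner product of $\nabla p = \bm{f} + \Delta\bu$ with $\nabla q_0$ and combining (i) the Helmholtz orthogonality $(\bm{f},\nabla q_0) = 0$, valid because $\bm{f} \in \HH_\sigma$; (ii) the identity $(\nabla p,\nabla q_0) = \|p\|_{L^2}^2$ following from $\Delta q_0 = -p$ together with $\pdnu q_0 = 0$; and (iii) integration by parts in $(\Delta\bu,\nabla q_0)$ using the geometric fact that $\bm{n}\cdot\pdnu\bu = 0$ on $\pd\Omega$ (a consequence of $\bu|_{\pd\Omega} = \bm{0}$ and $\div\bu = 0$, so that tangential derivatives of $\bu$ vanish and the divergence-free condition forces the normal-normal component to vanish as well), one arrives at the identity
\[
\|p\|_{L^2}^2 = -(\nabla\bu,\nabla^2 q_0) + \int_{\pd\Omega}\omega\,\pd_\tau q_0\,dS,
\]
where $\omega := \curl_2 \bu$ denotes the scalar vorticity; the bulk term is controlled by $C\|\nabla\bu\|\|p\|_{L^2}$ via Cauchy--Schwarz and the elliptic estimate for $q_0$.

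The main obstacle is a sufficiently sharp estimation of the boundary integral $\int_{\pd\Omega}\omega\,\pd_\tau q_0\,dS$, since a crude trace bound of the form $|\cdot| \leq C\|\bm{f}\|\|p\|_{L^2}$ would yield only $\|p\|_{L^2} \leq C\|\bm{f}\|$ and a suboptimal $L^4$-estimate. The plan is to exploit the 2D interpolation trace inequality $\|w\|_{L^2(\pd\Omega)} \leq C\|w\|_{L^2(\Omega)}^{1/2}\|w\|_{H^1(\Omega)}^{1/2}$ applied to $\omega$, using $\|\omega\|_{L^2(\Omega)} \leq \sqrt{2}\|\nabla\bu\|$ and $\|\omega\|_{H^1(\Omega)} \leq C\|\bu\|_{H^2} \leq C\|\bm{f}\|$, together with an analogous trace estimate $\|\pd_\tau q_0\|_{L^2(\pd\Omega)} \leq C\|p\|_{L^2}$; after reabsorbing the factor $\|p\|_{L^2}$ into the left-hand side and using $\|\nabla\bu\| \leq C\|\bm{f}\|$ to consolidate exponents, this yields the desired $L^2$-estimate $\|p\|_{L^2} \leq C\|\nabla\bu\|$. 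Substituting into the Ladyzhenskaya inequality together with the $H^1$-bound then concludes the proof of $\|p\|_{L^4}^2 \leq C\|\nabla\bm{A}_S^{-1}\bm{f}\|\|\bm{f}\|$.
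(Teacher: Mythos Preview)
The paper does not give its own proof of this lemma; it is simply quoted from \cite{CCGAGMG}, Lemma~3.1, so there is no in-paper argument to compare against and the only question is whether your sketch stands on its own.

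Your overall plan (Ladyzhenskaya on $p$, then $\|\nabla p\|\le C\|\bm f\|$ from Stokes $H^2$-regularity, then $\|p\|\le C\|\nabla\bu\|$ via the Neumann--Poisson potential $q_0$) is natural, and the identity
\[
\|p\|^2 = -(\nabla\bu,\nabla^2 q_0) + \int_{\partial\Omega}\omega\,\partial_\tau q_0\,dS
\]
is correct; in fact the bulk term can be integrated by parts once more and vanishes by $\div\bu=0$ together with $\bm n\cdot\partial_\nu\bu=0$, but keeping it is harmless. The gap is the last step. Your trace bound on the boundary integral gives only
\[
\|p\|\ \le\ C\|\nabla\bu\| + C\|\nabla\bu\|^{1/2}\|\bm f\|^{1/2},
\]
and the sentence ``using $\|\nabla\bu\|\le C\|\bm f\|$ to consolidate exponents, this yields $\|p\|_{L^2}\le C\|\nabla\bu\|$'' is where the argument breaks: that inequality goes the wrong way, so $\|\nabla\bu\|^{1/2}\|\bm f\|^{1/2}$ is in general \emph{larger} than $\|\nabla\bu\|$, not smaller. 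Concretely, take $\bm f=\bm X_j$ a normalized Stokes eigenfunction with eigenvalue $\lambda_j$; then $\|\nabla\bu\|=\lambda_j^{-1/2}$ while $\|\nabla\bu\|^{1/2}\|\bm f\|^{1/2}=\lambda_j^{-1/4}\gg\lambda_j^{-1/2}$ as $j\to\infty$. Feeding your actual bound into Ladyzhenskaya produces at best $\|p\|_{L^4}\le C\|\nabla\bu\|^{1/4}\|\bm f\|^{3/4}$, which is strictly weaker than \eqref{pressure:L4}.

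The loss comes from estimating $\|\omega\|_{L^2(\partial\Omega)}$ via the multiplicative trace inequality, which unavoidably brings in $\|\omega\|_{H^1(\Omega)}\sim\|\bm f\|$. A repair must avoid paying that half-derivative on $\omega$: for instance, integrate by parts along $\partial\Omega$ and use the relation $\partial_\tau\omega=-\partial_\nu p$ (valid because $\Delta\bu=-\curl_1\omega$ and $\bm f\cdot\bm n=0$), which recasts the problem as a Neumann estimate for the harmonic function $p$, or work in the $H^{1/2}$--$H^{-1/2}$ boundary duality rather than in $L^2(\partial\Omega)$. As written, the argument does not close; you should consult the original proof in \cite{CCGAGMG}.
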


The second result concerns the regularity of the two-dimensional nonlocal convective Cahn--Hilliard equation with singular potential and constant mobility:
\begin{subequations} \label{sec2prere:nlocCH}
\begin{alignat}{2}
0& = \pd_t\phi + \bm{v} \cdot \nabla\phi - \Delta\mu  && \quad \text{ in } \Omega \times (0,T), \\
\mu & = F'(\phi) + a\phi - K \star\phi && \quad \text{ in } \Omega \times (0,T),  \\
0 & = \pd_{\bnu}\mu  && \quad \text{ on } \partial\Omega \times (0,T),\\ 
\phi(0) & = \phi_0 && \quad \text{ in } \Omega,
\end{alignat}
\end{subequations}
where $\bm{v}$ is divergence-free. We remark that \eqref{sec2prere:nlocCH} differs with \cite[(4.1)]{CCGAGMG} by the appearance of the lower order term $a \phi$, and hence the assertions of the following result are modified to account for the difference.

\begin{lem}[cf.~Theorem 4.1 of \cite{CCGAGMG}] \label{sec2prere:str:sol:reg:CH}
Let \eqref{gen:ass1}--\eqref{gen:ass3}  and \eqref{sing:pot:ass1}--\eqref{sing:pot:ass6} hold and let $T>0$ be arbitrary. Assume that $\bm{v} \in L^{4}( 0,T;\mathbb{L}_{\sigma}^{4})$, $\phi_0 \in H^1(\Omega) \cap L^{\infty}(\Omega)$ with $|\overline{\phi}_0| < 1$, $F(\phi_{0}) \in L^1(\Omega)$, $F'(\phi_0) \in L^2(\Omega)$, $F''(\phi_0) \nabla \phi_0 \in L^2(\Omega;\R^2)$. Then, there exists a unique strong solution to \eqref{sec2prere:nlocCH} with
\begin{align*}
\phi & \in L^\infty(0,T;L^\infty(\Omega)) \text{ such that } |\phi(x,t)| < 1 \text{ for a.e.~} x \in \Omega, \ \forall t \in [0,T], \\
    \phi & \in L^\infty(0,T;H^1(\Omega)) \cap L^{2p/(p-2)}(0,T;W^{1,p}(\Omega)), \quad \forall p \in (2,\infty), \\
    \pd_t \phi & \in L^4(0,T;H^1(\Omega)^*) \cap L^2(0,T;L^2(\Omega)), \\
    \mu & \in L^\infty(0,T;H^1(\Omega)) \cap L^2(0,T;H^2_n(\Omega)) \cap H^1(0,T;H^1(\Omega)^*), \\
    F'(\phi) & \in L^\infty(0,T;H^1(\Omega)), \quad F''(\phi) \in L^\infty(0,T;L^p(\Omega)), \quad \forall p \in [2,\infty).
\end{align*}
Moreover, there exists a positive constant $C$ depending only on $K$ and $\Omega$ such that 
\begin{align}
& \| \nabla \mu \|_{L^2(Q)}^2 \leq C + \| \bm{v} \|_{L^2(Q)}^2, \label{str:sol:CH:grad:mu:L2L2} \\
\label{str:sol:CH:grad:mu:Linfty:L2:est} & \| \nabla \mu \|_{L^\infty(0,T;L^2(\Omega))} 
 \leq \Big ( \| \nabla \mu_0 \|+ C \big [ \| \bm{v} \|_{L^2(Q)} + \| \nabla \mu \|_{L^2(Q)} \big ] \Big ) \exp \Big ( C \| \bm{v} \|_{L^4(Q)}^4 \Big ) =: \Xi_1, \\
\notag & \| \pd_t \phi \|_{L^2(Q)}^2 + \| \nabla \mu \|_{L^2(0,T;H^1(\Omega))}^2 \\
\notag & \quad \leq C \Big ( \|\nabla \mu_0 \|^2 + C \big [ \| \bm{v} \|_{L^2(Q)}^2 + \| \nabla \mu \|_{L^2(Q)}^2 \big ] \Big )  \Big ( 1 + \| \bm{v} \|_{L^4(Q)}^4 \Big ) \exp \Big ( C \| \bm{v} \|_{L^4(Q)}^4 \Big ) \\
\label{str:sol:tdphi:L2L2:CH:grad:mu:L2H1:est} & \quad =: \Xi_2,
\end{align}
where $\mu_0 := F'(\phi_0) + a \phi_0 - K \star \phi_0$, 
and there exists a positive constant $C$ depending only on $K$, $\Omega$, $\Xi_1$, $\Xi_2$, $\overline{\phi}_0$ and $T$ such that 
\begin{subequations}\label{str:sol:phi:mu:F:bdds}
\begin{alignat}{2}
& \| \mu \|_{L^\infty(0,T;H^1(\Omega))} + \| \phi \|_{L^\infty(0,T;H^1(\Omega))} + \| F'(\phi) \|_{L^\infty(0,T;H^1(\Omega))} \leq C, \\
& \| F''(\phi) \|_{L^\infty(0,T;L^p(\Omega))} \leq C \quad \forall p \in [2, \infty), \\
& \| \phi \|_{L^{2p/(p-2)}(0,T;L^p(\Omega))} \leq C \quad \forall p \in (2,\infty), \\
& \| \mu \|_{L^2(0,T;H^2(\Omega)) \cap H^1(0,T;H^1(\Omega)^*)} \leq C, \\
& \| \pd_t \phi \|_{L^\infty(0,T;H^1(\Omega)^*)} \leq C \quad \text{ if } \bm{v} \in L^\infty(0,T;\HH_\sigma).
\end{alignat}
\end{subequations}
If in addition $\| \phi_0 \|_{L^\infty(\Omega)} \leq 1 - \delta_0$ for some $\delta_0 \in (0,1)$, then there exists $\delta > 0$ such that 
\[
\sup_{t \in [0,T]} \| \phi(t) \|_{L^\infty(\Omega)} \leq 1 - \delta.
\]
Consequently, it also holds that $\pd_t \mu \in L^2(0,T;L^2(\Omega))$ and $\mu \in C^0([0,T];H^1(\Omega))$.
\end{lem}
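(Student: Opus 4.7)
The plan is to adapt the semi-Galerkin scheme of \cite{CCGAGMG}, treating the velocity $\bu$ and scalar micro-rotation $\omega$ at the finite-dimensional Galerkin level, while keeping the nonlocal convective Cahn--Hilliard subsystem $(\phi,\mu)$ intact so that Lemma \ref{sec2prere:str:sol:reg:CH} can be invoked directly. Concretely, fix the first $n$ eigenfunctions $\{\bm{X}_j\}_{j=1}^n$ of the Stokes operator $\bm{A}_S$ and $\{Y_j\}_{j=1}^n$ of the scalar Dirichlet--Laplacian on $\Omega$, and seek
\begin{equation*}
\bu_n(t) = \sum_{j=1}^n a_j^n(t)\,\bm{X}_j, \qquad \omega_n(t) = \sum_{j=1}^n b_j^n(t)\,Y_j
\end{equation*}
satisfying the Galerkin projections of \eqref{2D:nonlocal:model:equ:bu}--\eqref{2D:nonlocal:model:equ:bo}, coupled to the full nonlocal Cahn--Hilliard problem \eqref{sec2prere:nlocCH} driven by $\bm{v}=\bu_n$. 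Given any iterate $\bu_n \in L^4(0,T;\mathbb{L}^4_\sigma)$, Lemma \ref{sec2prere:str:sol:reg:CH} produces a unique $(\phi_n,\mu_n)$ with the quantitative bounds \eqref{str:sol:CH:grad:mu:L2L2}--\eqref{str:sol:phi:mu:F:bdds}; this feeds back into an ODE system for the coefficients $(a^n,b^n)$ whose right-hand side is continuous in the relevant topologies, so that a Schauder fixed-point argument produces a local-in-time Galerkin solution on some $[0,T_n]$.

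Next, I would derive uniform-in-$n$ \emph{strong} estimates so that $T_n$ can be taken equal to any given $T>0$. Testing the momentum equation by $\bm{A}_S\bu_n$ and the angular-momentum equation by $-\Delta\omega_n$ (both admissible since these quantities lie in the respective Galerkin spaces), and using the 2D Ladyzhenskaya-type inequality $\|\bm{f}\|_{L^4}^2 \leq C\|\bm{f}\|\,\|\bm{f}\|_{H^1}$, one expects to arrive at
\begin{equation*}
\frac{d}{dt}\bigl(\|\nabla \bu_n\|^2 + \|\nabla \omega_n\|^2\bigr) + c\bigl(\|\bm{A}_S\bu_n\|^2 + \|\Delta\omega_n\|^2\bigr) \leq \mathcal{R}_n,
\end{equation*}
where $\mathcal{R}_n$ collects the capillary force $\mu_n\nabla\phi_n$, the micro-rotation coupling $2\curl_1(\eta_r(\phi_n)\omega_n)$, the transport by $\bm{J}_n = -\tfrac{\overline{\rho}_1-\overline{\rho}_2}{2}\nabla\mu_n$, the variable-coefficient viscous stresses, and the density time derivative $\rho'(\phi_n)\pd_t\phi_n\,\bu_n$. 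The Cahn--Hilliard contributions are controlled by combining \eqref{str:sol:CH:grad:mu:Linfty:L2:est}--\eqref{str:sol:tdphi:L2L2:CH:grad:mu:L2H1:est} with the $L^\infty(0,T;W^{1,p}(\Omega))$ bound on $\phi_n$ from \eqref{str:sol:phi:mu:F:bdds} for all $p<\infty$ and the $L^2(0,T;L^2(\Omega))$ bound on $\pd_t\phi_n$. A Gronwall argument closes the hierarchy of estimates, producing uniform bounds for $\bu_n$ in $L^\infty(0,T;\HH^1_\sigma)\cap L^2(0,T;\HH^2_\sigma)\cap H^1(0,T;\HH_\sigma)$ and analogously for $\omega_n$. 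Passing to the limit via the Aubin--Lions lemma recovers the stated regularity for $(\bu,\omega,\phi,\mu)$, and the pressure $p \in L^2(0,T;H^1_{(0)}(\Omega))$ is reconstructed \textit{a posteriori} by de Rham's theorem, with its $L^4(\Omega)$-in-space control supplied by Lemma \ref{L4:pre:estim}.

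For uniqueness and continuous dependence under the separation hypothesis $\|\phi_0\|_{L^\infty}\leq 1-\delta_0$, I would take two solutions $(\bu_i,\omega_i,\phi_i,\mu_i)$, set $(\bu,\omega,\phi,\mu)$ to be their differences, and test the difference system by $\bu$, $\omega$, and a suitable combination of $A_N^{-1}(\phi-\overline{\phi})$ and $\mu$. Invoking Lemma \ref{sec2prere:str:sol:reg:CH} to guarantee $\|\phi_i\|_{L^\infty(Q)}\leq 1-\delta$ and hence $F''(\phi_i)\in L^\infty(Q)$, one obtains a differential inequality
\begin{equation*}
\frac{d}{dt}\bigl(\|\bu\|^2 + \|\omega\|^2 + \|\phi\|_*^2\bigr) \leq \Lambda(t)\bigl(\|\bu\|^2 + \|\omega\|^2 + \|\phi\|_*^2\bigr)
\end{equation*}
with $\Lambda \in L^1(0,T)$, from which Gronwall's lemma completes the proof. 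I expect the main obstacle to lie in closing the higher-order estimate at the Galerkin level: the coupling $2\curl_1(\eta_r(\phi_n)\omega_n)$ in the momentum equation and $2\eta_r(\phi_n)\curl_2\bu_n$ in the angular-momentum equation force the $\bu_n$ and $\omega_n$ estimates to be performed simultaneously, and the sign-definite dissipation $4\eta_r(\phi_n)|\tfrac{1}{2}\curl_2\bu_n - \omega_n|^2$ that is exploited at the energy level does not reappear in combined form when testing with $\bm{A}_S\bu_n$ and $-\Delta\omega_n$, so the cross terms must be absorbed delicately using 2D Sobolev embeddings together with the $\phi_n$-regularity of \eqref{str:sol:phi:mu:F:bdds}.
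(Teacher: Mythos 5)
Your proposal does not address the statement in question. Lemma \ref{sec2prere:str:sol:reg:CH} is a standalone result about the auxiliary nonlocal convective Cahn--Hilliard system \eqref{sec2prere:nlocCH} with a \emph{prescribed} divergence-free velocity $\bm{v} \in L^4(0,T;\mathbb{L}^4_\sigma)$: it asserts existence, uniqueness and regularity of $(\phi,\mu)$, the quantitative bounds \eqref{str:sol:CH:grad:mu:L2L2}--\eqref{str:sol:phi:mu:F:bdds}, and the separation property under $\|\phi_0\|_{L^\infty}\leq 1-\delta_0$. What you have sketched instead is the semi-Galerkin proof of the coupled nMAGG strong well-posedness (i.e.\ Theorem \ref{thm:strwellposed}), and in doing so you repeatedly \emph{invoke} the very lemma you were asked to prove ("Lemma \ref{sec2prere:str:sol:reg:CH} can be invoked directly", the use of \eqref{str:sol:CH:grad:mu:Linfty:L2:est}--\eqref{str:sol:tdphi:L2L2:CH:grad:mu:L2H1:est} and \eqref{str:sol:phi:mu:F:bdds} to close the Gronwall argument, and the separation property in the uniqueness step). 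The argument is therefore circular with respect to the target statement: none of the claimed estimates on $(\phi,\mu)$, the $W^{1,p}$-regularity of $\phi$, the $H^1$-bound on $F'(\phi)$, the $L^p$-bound on $F''(\phi)$, or the strict separation are ever established.

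For comparison, the paper does not prove this lemma from scratch either: it is reported as Theorem 4.1 of the cited work of Gal, Giorgini, Grasselli and Poiatti, with the remark that the system here contains the additional lower-order term $a\phi$ in the chemical potential, so the assertions are adjusted accordingly. A genuine blind proof would have to either reproduce the arguments of that theorem for the modified system (testing \eqref{sec2prere:nlocCH} by $\mu$ and by $\pd_t\phi$, differentiating the equation for $\mu$ in time to get \eqref{str:sol:CH:grad:mu:Linfty:L2:est}--\eqref{str:sol:tdphi:L2L2:CH:grad:mu:L2H1:est}, controlling the mean of $\mu$ via an inequality of the type \eqref{F'abs:ineq}, using elliptic regularity and the assumptions \eqref{sing:pot:ass1}--\eqref{sing:pot:ass6} for the $F'(\phi)$, $F''(\phi)$ and $W^{1,p}$ bounds, and a De Giorgi/comparison argument for the separation property), or at least carry out a careful reduction to the cited result that tracks the effect of the extra term $a\phi$ (which is Lipschitz thanks to $K\in W^{1,1}$ and \eqref{gen:ass2}) through each estimate. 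Neither is present, so the proposal as written leaves the entire content of the lemma unproven.
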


\subsection{Semi-Galerkin scheme}
We again focus on the case where $\Omega$ is a bounded domain and discuss the modifications needed for the case where $\Omega = \mathbb{T}^2$ afterwards. Recalling from the proof of Lemma \ref{lem:exist:weaksol} the family of eigenfunctions $\{\bm{X}_j\}_{j \in \N}$ of the Stokes operator $\bm{A}_S$, as well as the associated finite dimensional subspace $\mathbb{V}_{u,n}$ and projection operator $\mathbb{P}_{u,n}$, due to the $C^\infty$ regularity of $\pd \Omega$, it follows from regularity theory that $\bm{X}_j \in \HH^2_{\sigma}$ for all $j \in \N$. Furthermore, there exist positive constants $C_n$ depending on $n$ such that 
\begin{align}
\label{inv:est:Gal:app}
\| \bm{f} \|_{H^1} \leq C_n \| \bm{f} \| , \quad \| \bm{f} \|_{H^2} \leq C_n \| \bm{f} \| \quad \forall \bm{f} \in \mathbb{V}_{u,n}.
\end{align}
With an abuse of notation, we use $\{Y_j\}_{j \in \N}$ to denote the family of eigenfunctions corresponding to the scalar Dirichlet--Laplacian operator $A_1$. Denoting by $\mathbb{V}_{w,n}$ as the finite dimensional subspace spanned by the first $n$ eigenfunctions and $\mathbb{P}_{w,n}$ as the associated projection operator, by regularity theory we have that $Y_j \in H^2(\Omega) \cap H^1_0(\Omega)$ and similarly, there exist positive constants $C_n$ such that 
\[
\| f \|_{H^1} \leq C_n \| f \|_{L^2}, \quad \| f \|_{H^2} \leq C_n \| f \|_{L^2} \quad \forall f \in \mathbb{V}_{w,n}.
\]
We first state a continuity property for the strong solutions of the nonlocal convective Cahn--Hilliard equation where $\bm{v} \in C^0([0,T];\mathbb{V}_{\bu,n})$, which has been established in Section 5.2 of \cite{CCGAGMG}:
\begin{prop}
Let $\{\bm{v}_k\}_{k \in \N} \subset C^0([0,T];\mathbb{V}_{\bu,n})$ be a sequence such that $\bm{v}_k \to \bm{v}_*$ in $C^0([0,T];\mathbb{V}_{\bu,n})$, and let $(\phi_k, \mu_k)_{k \in \N}$ and $(\phi_*, \mu_*)$ denote the corresponding strong solutions to the nonlocal convective Cahn--Hilliard equation associated to $\bm{v}_k$ and $\bm{v}_*$, respectively. Then, there exists a positive constant $C$ independent of $k \in \N$ such that 
\begin{subequations}\label{CH:uni:est:Gal:diff:2Dstr}
\begin{alignat}{2}
\| \phi_k \|_{L^\infty(0,T;H^1(\Omega)) \cap H^1(0,T;L^2(\Omega)) \cap W^{1,\infty}(0,T;H^1(\Omega)^*)} & \leq C, \\
\| \phi_* \|_{L^\infty(0,T;H^1(\Omega)) \cap H^1(0,T;L^2(\Omega)) \cap W^{1,\infty}(0,T;H^1(\Omega)^*)} & \leq C, \\
\| \mu_k \|_{L^\infty(0,T;H^1(\Omega)) \cap L^2(0,T;H^2(\Omega)) \cap H^1(0,T;H^1(\Omega)^*)} 
& \leq C, \\
\| \mu_* \|_{L^\infty(0,T;H^1(\Omega)) \cap L^2(0,T;H^2(\Omega)) \cap H^1(0,T;H^1(\Omega)^*)} & \leq C, \\
\| F' (\phi_k) 
\|_{L^\infty(0,T;H^1(\Omega))} + \| F''(\phi_k) \|_{L^\infty(0,T;L^p(\Omega))} & \leq C, \\
\| F' (\phi_*) 
\|_{L^\infty(0,T;H^1(\Omega))} + \| F''(\phi_*) \|_{L^\infty(0,T;L^p(\Omega))} & \leq C,
\end{alignat}
\end{subequations}
for any $p \in [2,\infty)$. Moreover, as $k \to \infty$,
\begin{align}
\| \phi_k - \phi_* \|_{L^\infty(0,T;H^1(\Omega)^*)} + \| \phi_k - \phi_* \|_{L^4(0,T;L^4(\Omega))} + \| \mu_k - \mu_* \|_{L^2(0,T;L^2(\Omega))} \to 0.
\end{align}
\end{prop}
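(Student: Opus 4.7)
The plan is to first establish the uniform estimates in \eqref{CH:uni:est:Gal:diff:2Dstr} via a direct application of Lemma \ref{sec2prere:str:sol:reg:CH} to each element of the sequence and the limit, then derive a Gr\"onwall-type estimate for the differences $\tilde{\phi} := \phi_k - \phi_*$, $\tilde{\mu} := \mu_k - \mu_*$ and $\tilde{\bm{v}} := \bm{v}_k - \bm{v}_*$. The uniform estimates are immediate: convergence of $\{\bm{v}_k\}$ in the finite-dimensional space $C^0([0,T];\mathbb{V}_{\bu,n})$ implies boundedness there, which translates via the inverse estimates \eqref{inv:est:Gal:app} to uniform bounds in $L^4(0,T;\mathbb{L}^4_\sigma) \cap L^\infty(0,T;\HH_\sigma)$. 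The hypotheses of Lemma \ref{sec2prere:str:sol:reg:CH} are thereby met with constants independent of $k$, and the estimates \eqref{str:sol:CH:grad:mu:L2L2}--\eqref{str:sol:phi:mu:F:bdds} transfer to the desired form.

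For the convergence claim, $(\tilde{\phi}, \tilde{\mu})$ satisfies
\[
\pd_t \tilde{\phi} + \div(\phi_k \tilde{\bm{v}}) + \bm{v}_* \cdot \nabla \tilde{\phi} = \Delta \tilde{\mu}, \qquad \tilde{\mu} = F'(\phi_k) - F'(\phi_*) + a \tilde{\phi} - K \star \tilde{\phi},
\]
with $\tilde{\phi}(0) = 0$ and $\overline{\tilde{\phi}}(t) = 0$ for all $t$. I would test the phase-field equation with $A_N^{-1} \tilde{\phi}$, yielding
\[
\tfrac{1}{2} \tfrac{d}{dt} \|\tilde{\phi}\|_*^2 + (\tilde{\mu}, \tilde{\phi}) = (\phi_k \tilde{\bm{v}} + \tilde{\phi} \bm{v}_*, \nabla A_N^{-1} \tilde{\phi}).
\]
Writing $F'(\phi_k) - F'(\phi_*) = \int_0^1 F''(s\phi_k + (1-s)\phi_*)\,ds \cdot \tilde{\phi}$ and combining with $(a\tilde{\phi},\tilde{\phi})$, assumption \eqref{sing:pot:ass5} produces $(F'(\phi_k) - F'(\phi_*) + a\tilde{\phi},\tilde{\phi}) \geq \hat{\alpha} \|\tilde{\phi}\|^2$. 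The residual nonlocal contribution is controlled via $\tilde{\phi} = -\Delta A_N^{-1} \tilde{\phi}$, integration by parts and $K \in W^{1,1}(\R^d)$, so that $|(K \star \tilde{\phi},\tilde{\phi})| \leq \|\nabla K\|_{L^1} \|\tilde{\phi}\| \|\tilde{\phi}\|_*$. The convection terms are handled using $\|\phi_k\|_{L^\infty} \leq 1$ and the inverse estimates which bound $\bm{v}_*$ in $L^\infty(Q)$. After Young's inequality we obtain
\[
\tfrac{d}{dt} \|\tilde{\phi}\|_*^2 + \hat{\alpha} \|\tilde{\phi}\|^2 \leq C \|\tilde{\phi}\|_*^2 + C \|\tilde{\bm{v}}\|^2,
\]
and Gr\"onwall's lemma with $\tilde{\phi}(0) = 0$ delivers $\|\tilde{\phi}\|_{L^\infty(0,T;H^1(\Omega)^*)} + \|\tilde{\phi}\|_{L^2(Q)} \to 0$ as $\|\tilde{\bm{v}}\|_{L^2(Q)} \to 0$, the latter following from finite-dimensional norm equivalence on $\mathbb{V}_{\bu,n}$.

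The $L^4(Q)$ convergence of $\tilde{\phi}$ then follows from the two-dimensional Gagliardo--Nirenberg estimate $\|\tilde{\phi}\|_{L^4}^2 \leq C \|\tilde{\phi}\| \|\tilde{\phi}\|_{H^1}$ together with the uniform $L^\infty(0,T;H^1)$ bound. For the $L^2(Q)$ convergence of $\tilde{\mu}$, the nonlocal part is directly controlled by $\|a\tilde{\phi} - K\star\tilde{\phi}\|_{L^2(Q)} \leq C \|\tilde{\phi}\|_{L^2(Q)} \to 0$; for the potential contribution I would exploit the convexity of $F_1''$ for the logarithmic potential \eqref{log:pot} to derive the pointwise bound $|F_1'(\phi_k) - F_1'(\phi_*)| \leq \tfrac{1}{2}(F_1''(\phi_k) + F_1''(\phi_*))|\tilde{\phi}|$, which combined with the uniform $L^\infty(0,T;L^4(\Omega))$ bound on $F_1''(\phi_k), F_1''(\phi_*)$ from \eqref{str:sol:phi:mu:F:bdds} and H\"older's inequality yields $\|F'(\phi_k) - F'(\phi_*)\|_{L^2(Q)} \leq C \|\tilde{\phi}\|_{L^4(Q)} \to 0$. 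The principal technical obstacle is the treatment of this singular potential difference, since neither solution is known to be uniformly separated from $\pm 1$; an alternative to invoking convexity of $F_1''$ would be to extract an a.e.\ convergent subsequence of $\phi_k$ from the $L^2(Q)$ limit and apply Vitali's theorem using the uniform $L^\infty(0,T;L^p(\Omega))$ bound on $F'(\phi_k)$.
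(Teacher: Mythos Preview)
Your argument is correct. The paper does not supply its own proof of this proposition; it merely cites Section~5.2 of \cite{CCGAGMG}, so there is no in-paper proof to compare against. Your approach---uniform bounds from Lemma~\ref{sec2prere:str:sol:reg:CH} applied with velocities uniformly bounded in $L^4(Q)$, followed by testing the difference equation with $A_N^{-1}\tilde\phi$ to obtain the $H^1(\Omega)^*$--$L^2$ Gr\"onwall inequality, then interpolating for $L^4(Q)$ and bootstrapping to $\tilde\mu$---is the standard route for such continuity results and is essentially what one finds in \cite{CCGAGMG}.

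One remark on your treatment of $F_1'(\phi_k)-F_1'(\phi_*)$: the pointwise bound via convexity of $F_1''$ is valid for the logarithmic potential \eqref{log:pot} (where indeed $F_1^{(4)}>0$), but convexity of $F_1''$ is not part of the abstract hypotheses \eqref{sing:pot:ass1}--\eqref{sing:pot:ass6}. Your stated alternative---a.e.\ convergence along a subsequence together with Vitali's theorem, using the uniform $L^\infty(0,T;H^1(\Omega))$ bound on $F'(\phi_k)$ for equi-integrability, and then the uniqueness of the limit to upgrade from subsequential to full convergence---is the cleaner way to handle the general case and closes the argument without additional structural assumptions on $F_1$.
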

Fixing $n \in \N$, we seek an approximate solution $(\bu_n, \bo_n, \phi_n, \mu_n)$ to the following system
\begin{subequations} \label{equ:Gal:app:2Dstr}
\begin{alignat}{2}
& \label{equ:bu:Gal:app:2Dstr} \left( \rho\left( \phi_{n} \right)\pd_{t}\bu_{n},\bv \right) + \left( \rho\left( \phi_{n} \right)\left( \bu_{n} \cdot \nabla \right)\bu_{n},\bv \right) + \left( 2\eta\left( \phi_{n} \right)\D \bu_{n},\nabla \bv \right) \\ & \notag \qquad + \left( 2\eta_{r}\left( \phi_{n} \right)\W\bu_{n},\nabla \bv \right)  - \frac{\rho_{1} - \rho_{2}}{2}\left( \left( \nabla\mu_{n} \cdot \nabla \right)\bu_{n},\bv \right) \\ & \notag \quad = - \left( \phi_{n}\nabla\mu_{n},\bv \right) + \left( 2\curl_1 \left( \eta_{r}\left( \phi_{n} \right)\omega_{n} \right),\bv \right), \\ 
& \label{equ:bo:Gal:app:2Dstr} \left( \rho\left( \phi_{n} \right)\pd_{t}\omega_{n},z \right) + \left( \rho\left( \phi_{n} \right) ( \bu_{n} \cdot \nabla \omega_{n}),z \right)  \\ 
& \notag \qquad + ( c_{d,a}(\phi_n)  \nabla \omega_n, \nabla z) - \frac{\rho_{1} - \rho_{2}}{2} (  \nabla\mu_{n} \cdot \nabla \omega_{n},z) \\ & \notag \quad = \left( 2\eta_{r}(\phi_n )\curl_2 \bu_{n},z \right) - \left( 4\eta_{r} (\phi_{n})\omega_{n},z \right), \\
& \label{equ:CH:Gal:app:2Dstr} \pd_{t}\phi_{n} + \bu_{n} \cdot \nabla\phi_{n} = \Delta\mu_{n} \text{ a.e. in } \Omega, \\
& \label{equ:mu:Gal:app:2Dstr} \mu_{n} = F'\left( \phi_{n} \right) + a\phi_{n} - K \star\phi_{n} \text{ a.e. in } \Omega,
\end{alignat}
\end{subequations}
for all $t \in (0,T)$ and for all $\bv \in \VV_{\bu,n}$ and $z \in \VV_{\omega,n}$, together with the following initial-boundary conditions:
\begin{subequations} \label{bcic:Gal:app:2Dstr}
\begin{alignat}{2}
\bu_{n} = \bm{0}, \quad  \omega_{n} = 0, \quad \pdnu \mu_{n} = 0 & \quad \text{ on } \Sigma, \\
\bu_n(0) = \mathbb{P}_{u,n}(\bu_0), \quad \omega_n(0) = \mathbb{P}_{w,n}(\omega_0), \quad \phi_n(0) = \phi_0 & \quad \text{ in } \Omega.
\end{alignat}
\end{subequations}

\begin{prop}\label{E&U:Gal:app:str:sol:nloc:MAGG}
For any $T> 0$ and for all $n \in \N$, there exists at least one solution quadruple $(\bu_n, \omega_n, \phi_n, \mu_n)$ to \eqref{equ:Gal:app:2Dstr}--\eqref{bcic:Gal:app:2Dstr} with the following regularities:
\begin{align*}
\bu_n & \in H^1(0,T;\mathbb{V}_{\bu,n}), \\
\omega_n & \in H^1(0,T;\mathbb{V}_{\omega,n}), \\
\phi_n & \in L^\infty(Q) \text{ such that } |\phi_n(x,t)| < 1 \text{ a.e.~in } \Omega \times (0,T), \\
\phi_n & \in L^\infty(0,T;H^1(\Omega)) \cap L^{2p/(p-2)}(0,T;W^{1,p}(\Omega)), \quad \forall p \in (2,\infty), \\
\pd_t \phi_n & \in L^\infty(0,T;H^1(\Omega)^*) \cap L^2(0,T;L^2(\Omega)), \\
\mu_n & \in L^\infty(0,T;H^1(\Omega)) \cap L^2(0,T;H^2_n(\Omega)) \cap H^1(0,T;H^1(\Omega)^*), \\
F'(\phi_n) & \in L^\infty(0,T;H^1(\Omega)), \quad F''(\phi_n) \in L^\infty(0,T;L^p(\Omega)), \quad \forall p \in [2, \infty).
\end{align*}
\end{prop}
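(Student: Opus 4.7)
The construction follows a semi-Galerkin scheme that decouples the nonlocal Cahn--Hilliard subsystem from the finite-dimensional ODE for $(\bu_n, \omega_n)$ via a fixed-point argument on the velocity. Fix $n \in \N$ and, for a candidate $\bar{\bu} \in C^0([0,T]; \mathbb{V}_{u,n})$, let $(\phi_n, \mu_n)$ denote the strong solution of \eqref{equ:CH:Gal:app:2Dstr}--\eqref{equ:mu:Gal:app:2Dstr} with $\bm{v} = \bar{\bu}$. This is admissible in Lemma~\ref{sec2prere:str:sol:reg:CH} because any element of $C^0([0,T]; \mathbb{V}_{u,n})$ lies in $L^4(0,T; \mathbb{L}^4_\sigma)$, thanks to the inverse estimate \eqref{inv:est:Gal:app} together with the 2D Sobolev embedding $H^2(\Omega) \hookrightarrow L^\infty(\Omega)$. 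The Lemma yields precisely the regularities of $(\phi_n, \mu_n, F'(\phi_n), F''(\phi_n))$ listed in the proposition. With $(\phi_n, \mu_n)$ frozen, \eqref{equ:bu:Gal:app:2Dstr}--\eqref{equ:bo:Gal:app:2Dstr} become a Carath\'eodory ODE system in the coefficients of $\bu_n, \omega_n$ with mass matrices $M_{ij}(t) = (\rho(\phi_n(t)) \bm{X}_i, \bm{X}_j)$ and $\widetilde{M}_{ij}(t) = (\rho(\phi_n(t)) Y_i, Y_j)$ that are uniformly positive definite, since $|\phi_n| < 1$ forces $\rho(\phi_n) \geq \min(\overline{\rho}_1, \overline{\rho}_2) > 0$. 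A local-in-time solution then exists by standard ODE theory.

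Next I would derive the a priori bounds by testing \eqref{equ:bu:Gal:app:2Dstr} with $\bv = \bu_n$, \eqref{equ:bo:Gal:app:2Dstr} with $z = \omega_n$, and \eqref{equ:CH:Gal:app:2Dstr} with $\mu_n$. Mimicking the cancellations used in the derivation of \eqref{Gal:energyId}, the $\rho'$-flux terms drop out, and the relation \eqref{2D:curlu:w} recombines the coupling between $\W \bu_n$, $\curl_2 \bu_n$ and $\omega_n$ into $| \tfrac{1}{2} \curl_2 \bu_n - \omega_n |^2$. This produces an energy identity with dissipation $\| \D \bu_n \|^2 + \| \nabla \omega_n \|^2 + \| \tfrac{1}{2} \curl_2 \bu_n - \omega_n \|^2 + \| \nabla \mu_n \|^2$. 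Gr\"onwall's lemma then delivers bounds for $\bu_n$ and $\omega_n$ in $L^\infty(0,T; \HH_\sigma) \cap L^2(0,T; \HH^1_\sigma)$ and $L^\infty(0,T; L^2(\Omega)) \cap L^2(0,T; H^1_0(\Omega))$, respectively, uniformly in $\bar{\bu}$. These bounds rule out blow-up of the ODE solution and extend it to the whole of $[0,T]$, thereby defining a self-map $\mathcal{T}: \bar{\bu} \mapsto \bu_n$ of a sufficiently large closed ball $B_R \subset C^0([0,T]; \mathbb{V}_{u,n})$.

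I would then invoke Schauder's fixed point theorem, and this is where the main effort lies. Compactness of $\mathcal{T}(B_R)$ is cheap in finite dimensions: a comparison argument in the ODE, combined with the inverse estimate \eqref{inv:est:Gal:app}, yields $\pd_t \bu_n \in L^2(0,T; \mathbb{V}_{u,n})$ uniformly, and Arzel\`a--Ascoli then gives relative compactness in $C^0([0,T]; \mathbb{V}_{u,n})$. The genuine obstacle is the \emph{continuity} of $\mathcal{T}$: if $\bar{\bu}_k \to \bar{\bu}_*$ in $C^0([0,T]; \mathbb{V}_{u,n})$, the preceding continuity proposition provides $\phi_n^{(k)} \to \phi_n^*$ in $L^4(Q)$ and $\mu_n^{(k)} \to \mu_n^*$ in $L^2(Q)$; combined with the uniform bounds \eqref{CH:uni:est:Gal:diff:2Dstr}, the Lipschitz continuity of $\rho, \eta, \eta_r, c_{d,a}$ from \eqref{gen:ass3}, and the $W^{1,\infty}(\Omega)$-regularity of the basis functions $\bm{X}_i, Y_j$, this suffices to pass to the limit in each coefficient ODE and conclude $\mathcal{T}(\bar{\bu}_k) \to \mathcal{T}(\bar{\bu}_*)$.

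A fixed point $\bar{\bu}_* = \mathcal{T}(\bar{\bu}_*) = \bu_n$ is then produced, and $(\bu_n, \omega_n, \phi_n, \mu_n)$ solves \eqref{equ:Gal:app:2Dstr}--\eqref{bcic:Gal:app:2Dstr}. The regularities $\bu_n \in H^1(0,T; \mathbb{V}_{u,n})$ and $\omega_n \in H^1(0,T; \mathbb{V}_{w,n})$ follow from the $L^2$-in-time bound on the time derivatives of the ODE coefficients together with the finite-dimensional norm equivalence, while the remaining regularities for $\phi_n, \mu_n, F'(\phi_n), F''(\phi_n)$ are inherited verbatim from Lemma~\ref{sec2prere:str:sol:reg:CH} applied to $\bm{v} = \bu_n$.
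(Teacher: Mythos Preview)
Your overall scheme --- freeze a candidate velocity $\bar{\bu}$, solve the nonlocal Cahn--Hilliard problem via Lemma~\ref{sec2prere:str:sol:reg:CH}, solve the resulting finite-dimensional ODE for $(\bu_n,\omega_n)$, and close with Schauder --- is exactly the paper's strategy. The gap is in the a~priori bound needed for the self-map property. In your setup the Cahn--Hilliard equation carries the frozen $\bar{\bu}$, but you keep the full nonlinear convection $\rho(\phi_n)(\bu_n\cdot\nabla)\bu_n$ in \eqref{equ:bu:Gal:app:2Dstr}. When you then test \eqref{equ:bu:Gal:app:2Dstr} with $\bu_n$, \eqref{equ:bo:Gal:app:2Dstr} with $\omega_n$, and \eqref{equ:CH:Gal:app:2Dstr} with $\mu_n$, the cancellations of \eqref{Gal:energyId} no longer go through: a residual $\tfrac{\overline{\rho}_1-\overline{\rho}_2}{4}\int_\Omega (\bar{\bu}-\bu_n)\cdot\nabla\phi_n\,|\bu_n|^2\,dx$ survives from the density transport, and the capillary contributions $-(\phi_n\nabla\mu_n,\bu_n)$ from \eqref{equ:bu:Gal:app:2Dstr} and $(\phi_n\bar{\bu},\nabla\mu_n)$ from \eqref{equ:CH:Gal:app:2Dstr} fail to cancel. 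The Gr\"onwall bound you extract therefore depends on $\|\bar{\bu}\|_{C^0([0,T];\mathbb{V}_{\bu,n})}$, and you cannot conclude that $\mathcal{T}$ maps a fixed ball into itself.

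The paper repairs this in two steps. First, it \emph{linearises} the convection in the auxiliary ODE \eqref{equ:Gal:app:2Dstr:fixedpoint}, replacing $\bu_n$ by the frozen field $\bm{U}$ in $\rho(\phi_U)(\bm{U}\cdot\nabla)\bu^n$; the density identity $\partial_t\rho(\phi_U)+\bm{U}\cdot\nabla\rho(\phi_U)=\tfrac{\overline{\rho}_1-\overline{\rho}_2}{2}\Delta\mu_U$ then yields clean cancellation and the energy balance \eqref{diff:ineq:Gal:est:L2:bu:bo} carries only the single source $-(\phi_U\nabla\mu_U,\bu^n)$. Second, it does \emph{not} test the Cahn--Hilliard equation with $\mu_n$; instead it rewrites the source as $(\mu_U\nabla\phi_U,\bu^n)$ and exploits the explicit form $\mu_U=F'(\phi_U)+a\phi_U-K\star\phi_U$ together with $|\phi_U|\le1$ and $\div\bu^n=0$ to bound it by a constant depending only on $\Omega$, $\eta_*$ and $\|\nabla K\|_{L^1(\R^2)}$ --- crucially independent of $\bm{U}$. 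This is precisely what produces the uniform radius $M_0$ in \eqref{LinftyL2:bdd:Gal:app:bu:bo} and makes $\Lambda:S\to S$ a genuine self-map. Your sketch needs both modifications to close.
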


\begin{proof}
We fix a $\bm{U} \in H^1([0,T];\mathbb{V}_{u,n}) \subset L^4(0,T;\mathbb{L}_{\sigma}^4)$ and define $(\phi_U, \mu_U)$ to be the unique strong solution to the nonlocal convective Cahn--Hilliard equation \eqref{sec2prere:nlocCH} with $\bm{v} = \bm{U}$. Then, we consider Galerkin approximations $(\bu^n, \omega^n)_{n \in \N}$ of the form
\[
\bm{u}^n(x,t) = \sum_{j=1}^n a_j^n(t) \bm{X}_j(t), \quad \omega^n(x,t) = \sum_{j=1} b_j^n(t) Y_j(x), 
\]
satisfying
\begin{subequations} \label{equ:Gal:app:2Dstr:fixedpoint}
\begin{alignat}{2}
& \label{equ:bu:Gal:app:2Dstr:fixedpoint} \left( \rho (\phi_U) \pd_{t}\bu^n,\bm{X}_k \right) + \left( \rho(\phi_U) \left( \bm{U} \cdot \nabla \right)\bu^n,\bm{X}_k \right) + \left( 2\eta ( \phi_U )\D \bu^n,\nabla \bm{X}_k \right) \\ & \notag \qquad + \left( 2\eta_{r}(\phi_U) \W\bu^n,\nabla \bm{X}_k \right)  - \frac{\overline{\rho}_{1} - \overline{\rho}_{2}}{2}\left( \left( \nabla\mu_U \cdot \nabla \right)\bu^n,\bm{X}_k \right) \\ & \notag \quad = - \left( \phi_U\nabla\mu_U,\bm{X}_k \right) + \left( 2\curl_1 \left( \eta_{r}(\phi_U)\omega^n \right),\bm{X}_k \right), \\ 
& \label{equ:bo:Gal:app:2Dstr:fixedpoint} \left( \rho (\phi_U) \pd_{t}\omega^n,Y_k \right) + \left( \rho(\phi_U) ( \bm{U} \cdot \nabla \omega^n),Y_k \right)  \\ 
& \notag \qquad + ( c_{d,a}(\phi_U) \nabla \omega^n, \nabla Y_k) - \frac{\overline{\rho}_{1} - \overline{\rho}_{2}}{2} (  \nabla\mu_U \cdot \nabla \omega^n,Y_k) \\ & \notag \quad = \left( 2\eta_{r}(\phi_U )\curl_2 \bu^n,Y_k \right) - \left( 4\eta_{r} (\phi_U)\omega^n,Y_k \right), 
\end{alignat}
\end{subequations}
holding for all $k = 1, \dots, n$, and furnished with initial conditions $\bu^n(0) = \mathbb{P}_{\bu,n} (\bu_0)$ and $\omega^n(0) = \mathbb{P}_{\omega,n}(\omega_0)$.

By the continuity of $\rho$, $\eta$, $\eta_r$, $c_{d,a}$, as well as the fact that $\phi_U, \mu_U \in C_w([0,T];H^1(\Omega)) \cap C^0([0,T];L^p(\Omega))$ for any $p \in [1,\infty)$ by virtue of the regularity properties stated in Lemma \ref{sec2prere:str:sol:reg:CH}, we can reformulate \eqref{equ:Gal:app:2Dstr:fixedpoint} into a system of ordinary differential equations for the vectors of functions $\bm{A}^n(t) = (a_1^n(t), \dots, a_n^n(t))^{\top}$ and $\bm{B}^n(t) = (b_1^n(t), \dots, b_n^n(t))^{\top}$ with a right-hand side that depend continuously on $\bm{A}^n(t)$ and $\bm{B}^n(t)$, cf.~\cite{ChanLamstrsol,CCGAGMG}. Invoking the standard theory for ordinary differential systems we can deduce the existence of unique solutions $\bm{A}^n, \bm{B}^n \in C^1([0,T];\R^n)$ on the time interval $[0,T]$, leading to the existence of unique solutions $\bu^n \in C^1([0,T];\mathbb{V}_{\bu,n})$ and $\omega^n \in C^1([0,T];\mathbb{V}_{\omega,n})$ to \eqref{equ:Gal:app:2Dstr:fixedpoint}.

We introduce the mapping $\Lambda : \bm{U} \to \bu^n$, where $\bu^n$ is the first component of the unique solution pair $(\bu^n, \omega^n)$ to \eqref{equ:Gal:app:2Dstr:fixedpoint} with variables $(\phi_U, \mu_U)$, themselves as the unique solution pair to the nonlocal convective Cahn--Hilliard equation \eqref{sec2prere:nlocCH} corresponding to $\bm{U}$. Our aim is to identify a closed ball $S \subset C^0([0,T];\mathbb{V}_{\bu,n})$ such that $\Lambda : S \to S$, $\Lambda(\bm{U}) = \bu^n$, has at least one fixed point $\bu_n \in C^0([0,T];\mathbb{V}_{\bu,n})$. Such a fixed point generates a solution quadruple $(\bu_n, \omega_n, \phi_n, \mu_n)$ to the approximation system \eqref{equ:Gal:app:2Dstr}.

We start with some basic estimates. With the properties $\rho'(\phi_U) = \frac{\overline{\rho}_1 - \overline{\rho}_2}{2}$ and $\div \bm{U} = 0$, it holds that
\begin{align*}
& \int_\Omega (\pd_t \rho(\phi_U) + \div (\rho(\phi_U) \bm{U}) - \frac{\overline{\rho}_1 - \overline{\rho}_2}{2} \Delta \mu_U) (|\bu^n|^2 + |\omega^n|^2) \, dx \\
& \quad = \frac{\overline{\rho}_1 - \overline{\rho}_2}{2} \int_\Omega ( \pd_t \phi_U + \nabla \phi_U \cdot \bm{U} - \Delta \mu_U)(|\bu^n|^2 + |\omega^n|^2) \, dx = 0.
\end{align*}
Together with the identity
\begin{align*}
& \int_\Omega \rho(\phi_U) \frac{d}{dt} \frac{1}{2} |f^n|^2 + \rho(\phi_U) \bm{U} \cdot \nabla \frac{|f^n|^2}{2} \, dx \\
& \quad = \frac{d}{dt} \int_\Omega \frac{\rho(\phi_U)}{2} |f^n|^2 \, dx - \int_\Omega \frac{|f^n|^2}{2} (\pd_t \rho(\phi_U) + \bm{U} \cdot \nabla \rho(\phi_U)) \, dx
\end{align*}
valid for $f^n \in \{\bu^n, \omega^n\}$, we obtain the following identity after multiplying \eqref{equ:bu:Gal:app:2Dstr:fixedpoint} by $a_k^n$ and \eqref{equ:bo:Gal:app:2Dstr:fixedpoint} by $b_k^n$, and then summing over $k = 1, \dots, n$: 
\begin{equation}\label{diff:ineq:Gal:est:L2:bu:bo}
    \begin{aligned}
& \frac{d}{dt} \frac{1}{2} \int_\Omega \rho(\phi_U) (|\bu^n|^2 + |\omega^n|^2) \, dx + \int_\omega 4 \eta_r(\phi_U) |\tfrac{1}{2} \curl_2 \bu^n - \omega^n|^2 \, dx \\
& \qquad + \int_\Omega 2 \eta(\phi_U) |\D \bu^n|^2 + c_{d,a}(\phi_U) |\nabla \omega^n|^2 \, dx \\
& \quad = - \int_\Omega \phi_U \nabla \mu_U \cdot \bu^n \, dx,
    \end{aligned}
\end{equation}
where we have employed the relation \eqref{2D:curlu:w} to simplify the terms involving $\W \bu^n$, $\curl_2 \bu^n$ and $\curl_1 \omega^n$. For the right-hand side of \eqref{diff:ineq:Gal:est:L2:bu:bo} we invoke the definition of $\mu_U$, Korn's inequality (with constant $C_K >0$), Young's inequality for convolutions, the relation $\nabla a = \nabla K \star 1$, and the property that $|\phi_U| < 1$ a.e.~in $\Omega \times (0,T)$ to see that
\begin{align*}
& - (\phi_U \nabla \mu_U, \bu^n) = (\mu_U \nabla \phi_U, \bu^n) \\
& \quad = (\nabla F(\phi_U), \bu^n) + (a \phi_U \nabla \phi_U, \bu^n) - ((K \star \phi_U) \nabla \phi_U, \bu^n) \\
& \quad = 0 - \frac{1}{2}(|\phi_U|^2 \nabla a, \bu^n) + (\phi_U (\nabla K \star \phi_U), \bu^n) \\
& \quad \leq \frac{1}{2} \| \nabla a \| \| \bu^n \| +  \| \nabla K \|_{L^1} \| \phi_U \| \| \bu^n \| \leq \frac{3}{2} C_{K} |\Omega|^{1/2}\| \nabla K \|_{L^1(\R^2)} \| \D \bu^n \| \\
& \quad \leq \eta_* \| \D \bu^n \|^2 + \frac{9C_K^2}{16 \eta_*} |\Omega| \| \nabla K \|_{L^1(\R^2)}^2,
\end{align*}
where $\eta_*$ is the lower bound for $\eta$ from \eqref{gen:ass3}. Hence, returning to \eqref{diff:ineq:Gal:est:L2:bu:bo} we arrive at the differential inequality
\begin{align*}
\frac{d}{dt} \int_\Omega \rho(\phi_U) (|\bu^n|^2 + |\omega^n|^2) \, dx \leq \frac{9C_K^2}{8 \eta_*} |\Omega| \| \nabla K \|_{L^1(\R^2)}^2.
\end{align*}
Integrating the above inequality in time and utilizing the upper bound $\rho^*$ and lower bound $\rho_*$ of $\rho$, we get
\begin{align}\label{LinftyL2:bdd:Gal:app:bu:bo}
\max_{t \in [0,T]} \big ( \| \bu^n (t) \|^2 + \| \omega^n(t) \|^2 \big ) \leq \frac{\rho^*}{\rho_*} ( \| \bu_0 \|^2 + \| \omega_0 \|^2) + \frac{9 C_K^2 |\Omega| T}{8 \eta_* \rho_*} \| \nabla K \|_{L^1(\R^2)}^2 =: M_0^2.
\end{align}
We define the closed ball
\begin{align*}
    S = \Big \{ \bm{v} \in C^0([0,T];\mathbb{V}_{\bu,n}) \, : \, \| \bm{v} \|_{C^0([0,T];\mathbb{V}_{\bu,n})} \leq M_0 \Big \}.
\end{align*}
Then, $\Lambda : S \to S$, $\Lambda(\bm{U}) = \bu^n$ is well-defined by the estimate \eqref{LinftyL2:bdd:Gal:app:bu:bo}. Next, we multiply \eqref{equ:bu:Gal:app:2Dstr:fixedpoint} by $(a_k^n)'(t)$  and then summing over $k = 1, \dots, n$ to get
\begin{align*}
\rho_* \| \pd_t \bu^n \|^2 & \leq - (\rho(\phi_U) (\bm{U} \cdot \nabla) \bu^n, \pd_t \bu^n) - (2 \eta(\phi_U) \D \bu^n, \nabla \pd_t \bu^n) \\
& \quad - (2 \eta_r(\phi_U) \W \bu^n, \nabla \pd_t \bu^n) + \frac{\overline{\rho}_1 - \overline{\rho}_2}{2} ((\nabla \mu_U \cdot \nabla) \bu^n, \pd_t \bu^n) \\
& \quad - (\phi_U \nabla \mu_U, \pd_t \bu^n)  + (2 \curl_1(\eta_r(\phi_U) \omega^n), \pd_t \bu^n) \\
& \leq \rho^* \|\bm{U} \|_{L^4} \| \nabla \bu^n \|_{L^4} \| \pd_t \bu^n \| + 2 (\eta^* \| \D \bu^n \| +  \eta_r^* \| \W \bu^n \|) \| \nabla \pd_t \bu^n \| \\
& \quad + |\tfrac{\overline{\rho}_1 - \overline{\rho}_2}{2}| \| \nabla \mu_U \| \| \nabla \bu^n \|_{L^4} \| \pd_t \bu^n \|_{L^4} + \| \nabla \mu_U \| \| \pd_t \bu^n \| + 2 \eta_r^* \| \omega^n \| \| \nabla \pd_t \bu^n \|.
\end{align*}
Exploiting the inverse estimates \eqref{inv:est:Gal:app}, we find that 
\begin{align*}
\rho_* \| \pd_t \bu^n \|^2 & \leq \rho^* C_n \| \bm{U} \|_{L^4} \| \bu^n \| \| \pd_t \bu^n \| + 2(\eta^* + \eta_r^*) C_n^2 \| \bu^n \| \| \pd_t \bu^n \| \\
& \quad + |\tfrac{\overline{\rho}_1 + \overline{\rho}_2}{2} | C_n^2 \| \nabla \mu_U \| \| \bu^n \| \| \pd_t \bu^n \| + \| \nabla \mu_U \| \| \pd_t \bu^n \| + 2 \eta_r^* C_n \| \omega^n \| \| \pd_t \bu^n \| \\
& \leq C_n \| \pd_t \bu^n \| \Big [ \big ( 1 + \| \bm{U} \|_{L^4} + \| \nabla \mu_U \| \big ) \| \bu^n \| + \| \nabla \mu_U \| + \| \omega^n \| \Big ] \\
& \leq \frac{\rho_*}{2} \| \pd_t \bu^n \|^2 + C_n \Big [ \big (1 + \| \bm{U} \|_{L^4}^2 + \| \nabla \mu_U \|^2 \big )M_0^2 + \| \nabla \mu_U \|^2 + M_0^2 \Big ].
\end{align*}
Integrating in time over $(0,T)$ and using \eqref{str:sol:CH:grad:mu:L2L2} we deduce that
\begin{equation}\label{L2V:bdd:Gal:app:time:d:bu}
\begin{aligned}
\int_0^T \| \pd_t \bu^n \|^2 \, dt & \leq \frac{2}{\rho_*} C_n \Big ( \Big ( T + \| \bm{U} \|_{L^2(0,T;L^4)}^2 \Big ) (M_0^2 + 1) + M_0^2 \Big ) \\
& \leq \frac{2}{\rho_*} C_n \Big ( T (M_0^2 + 1)^2 + M_0^2 \Big ) =: M_1^2.
\end{aligned}
\end{equation}
Similarly, multiplying \eqref{equ:bo:Gal:app:2Dstr:fixedpoint} by $(b_k^n)'(t)$ and then summing over $k = 1, \dots, n$ yields
\begin{align*}
    \rho_* \| \pd_t \omega^n \|^2 & \leq \rho^* \| \bm{U} \|_{L^4} \| \nabla \omega^n \|_{L^4} \| \pd_t \omega^n \| + c_{d,a}^* \| \nabla \omega^n \| \| \nabla \pd_t \omega^n \| \\
    & \quad + |\tfrac{\overline{\rho}_1 - \overline{\rho}_2}{2}| \| \nabla \mu_U \| \| \nabla \omega^n \|_{L^4} \| \pd_t \omega^n \|_{L^4} + 2 \eta_r^* \| \nabla \bu^n \| \|\pd_t \omega^n \| + 4 \eta_r^* \| \omega^n \| \| \pd_t \omega^n \| \\
    & \leq \frac{\rho_*}{2} \| \pd_t \omega^n \|^2 + C_n \Big [ \Big ( 1 + \| \bm{U} \|_{L^4}^2 + \| \nabla \mu_U \|^2 \Big ) M_0^2 \Big ].
\end{align*}
Integrating in time over $(0,T)$ leads to 
\begin{align}\label{L2V:bdd:Gal:app:time:d:bo}
\int_0^T \| \pd_t \omega^n \|^2 \, dt \leq M_1^2.
\end{align}
Hence, we see that $\Lambda : S \to \widetilde{S}$ with $\widetilde{S} := \{ \bm{v} \in S \, : \, \| \pd_t \bm{v} \|_{L^2(0,T;\mathbb{V}_{\bu,n})} \leq M_1\}$. By Aubin--Lions lemma, on the finite dimensional space $\mathbb{V}_{\bu,n}$ we have the compact embedding $C^0([0,T];\mathbb{V}_{\bu,n}) \cap H^1(0,T;\mathbb{V}_{\bu,n}) \Subset C^0([0,T];\mathbb{V}_{\bu,n})$, which implies that $\Lambda : S \to S$ is a compact operator.

To apply Schauder's fixed point theorem, it remains to establish the continuity of $\Lambda$. Consider a sequence $\{\bm{U}_k\}_{k \in \N} \subset S$ such that $\bm{U}_k \to \widehat{\bm{U}}$ strongly in $C^0([0,T];\mathbb{V}_{\bu,n})$. Let $(\phi_k, \mu_k)$ and $(\widehat{\phi}, \widehat{\mu})$ denote the strong solution pair to the nonlocal convective Cahn--Hilliard equation associated to velocity $\bm{v} = \bm{U}_k$ and $\widehat{\bm{U}}$, respectively.  We define the differences
\[
\bm{U} := \bm{U}_k - \widehat{\bm{U}}, \quad \bu := \bu^k - \widehat{\bu}, \quad \omega := \omega^k - \widehat{\omega}, \quad \phi := \phi_k - \widehat{\phi}, \quad \mu := \mu_k - \widehat{\mu}
\]
and use the notation $f_k := f(\phi_k)$, $\widehat{f} := f(\widehat{\phi})$ and $f := f_k - \widehat{f}$ for $f \in \{ \rho, \eta, \eta_r, c_{d,a}\}$. The differences satisfy
\begin{subequations} \begin{alignat}{2}
\notag 0 & = ( \rho_k \pd_t \bu, \bm{v}) + (\rho \pd_t \widehat{\bu}, \bm{v}) + (\rho_k (\bm{U}_k \cdot \nabla) \bu^k - \hat{\rho}(\widehat{\bm{U}} \cdot \nabla) \widehat{\bu}, \bm{v}) \\
\notag & \quad  + (2 \eta_k \D \bu, \nabla \bm{v})  + (2 \eta \D \widehat{\bu}, \nabla \bm{v}) + (2 \eta_{r,k} \W \bu, \nabla \bm{v}) + (2 \eta_r \W \widehat{\bu}, \nabla \bm{v}) \\
\notag &\quad - \tfrac{\overline{\rho}_1 - \overline{\rho}_2}{2} (( \nabla \mu_k \cdot \nabla) \bu^k - (\nabla \widehat{\mu} \cdot \nabla) \widehat{\bu}, \bm{v}) - (\mu_k \nabla \phi_k - \widehat{\mu} \nabla \widehat{\phi}, \bm{v}) \\
\label{Gal:app:diff:est:equ:bu} & \quad - (2 \curl_1(\eta_{r,k} \omega + \eta_r \widehat{\omega}), \bm{v}) \\
\notag 0 & =
(\rho_k \pd_t \omega, z) + (\rho \pd_t \widehat{\omega}, z) + (\rho_k \bm{U}_k \cdot \nabla \omega^k - \hat{\rho} \widehat{\bm{U}} \cdot \nabla \widehat{\omega}, z) \\
\notag & \quad + (c_{d,a,k} \nabla \omega, \nabla z) + (c_{d,a} \nabla \widehat{\omega}, \nabla z) - \tfrac{\overline{\rho}_1 - \overline{\rho}_2}{2} (\nabla \mu_k \cdot \nabla \omega + \nabla \mu \cdot \nabla \widehat{\omega}, z), \\
\label{Gal:app:diff:est:equ:bo}  
& \quad - (2 \eta_{r,k} \curl_2 \bu + 2 \eta_r \curl_2 \widehat{\bu}, z) + (4 \eta_{r,k} \omega + 4 \eta_r \widehat{\omega}, z).
\end{alignat}
\end{subequations}
holding for all $\bm{v} \in \mathbb{V}_{\bu,n}$ and $z \in \mathbb{V}_{\omega,n}$. Choosing $\bm{v} = \bu = \bu^k - \widehat{\bu}$ and $z = \omega = \omega^k - \widehat{\omega}$ in \eqref{Gal:app:diff:est:equ:bu} and \eqref{Gal:app:diff:est:equ:bo}, respectively, summing the resulting identities yields
\begin{equation}\label{Gal:app:fixedpoint:cont:identity}
    \begin{aligned}
& \frac{1}{2} \frac{d}{dt} \int_\Omega \rho_k (|\bu|^2 + |\omega|^2) \, dx + \int_\Omega 2 \eta_k |\D \bu|^2 + 2 \eta_{r,k} |\W \bu|^2 + c_{d,a,k} |\nabla \omega|^2 + 4 \eta_{r,k} |\omega|^2 \, dx \\
& \quad = \int_\Omega \tfrac{\overline{\rho}_1 - \overline{\rho}_2}{4} \pd_t \phi_k |\bu|^2  - \tfrac{\overline{\rho}_1 - \overline{\rho}_2}{2} \phi \pd_t \widehat{\bu} \cdot \bu - (\rho_k (\bm{U}_k \cdot \nabla ) \bu^k - \hat{\rho} (\widehat{\bm{U}} \cdot \nabla) \widehat{\bu}) \cdot \bu \, dx \\
& \qquad - \int_\Omega 2 \eta \D \widehat{\bu} : \nabla \bu + 2 \eta_r \W \widehat{\bu} : \nabla \bu -  \tfrac{\overline{\rho}_1 - \overline{\rho}_2}{2} (( \nabla \mu_k \cdot \nabla) \bu^k - (\nabla \widehat{\mu} \cdot \nabla) \widehat{\bu}) \cdot \bu \, dx \\
& \qquad + \int_\Omega (\mu_k \nabla \phi_k - \widehat{\mu} \nabla \widehat{\phi}) \cdot \bu + 2 \curl_1(\eta_{r,k} \omega) \cdot \bu + 2 \curl_1 (\eta_r \widehat{\omega}) \cdot \bu \, dx \\
& \qquad + \int_\Omega \tfrac{\overline{\rho}_1 - \overline{\rho}_2}{4} \pd_t \phi_k |\omega|^2 - \tfrac{\overline{\rho}_1 - \overline{\rho}_2}{2} \phi \pd_t \widehat{\omega} \omega - (\rho_k (\bm{U}_k \cdot \nabla \omega^k) - \hat{\rho} (\widehat{\bm{U}} \cdot \nabla \widehat{\omega})) \omega \, dx \\
& \qquad - \int_\Omega c_{d,a} \nabla \widehat{\omega} \cdot \nabla \omega - 4 \eta_r \widehat{\omega} \omega  - \tfrac{\overline{\rho}_1 - \overline{\rho}_2}{2} ( \nabla \mu_k \cdot \nabla \omega^k - \nabla \widehat{\mu} \cdot \nabla \widehat{\omega}) \omega dx \\
& \qquad + \int_\Omega 2\eta_{r,k} \curl_2 \bu \omega + 2 \eta_r \curl_2 \widehat{\bu} \omega \, dx \\
& \quad =: A_1 + \cdots + A_{17}.
    \end{aligned}
\end{equation}
Comparing with \cite{CCGAGMG}, the new terms arising due to micropolar effects are $A_5$, $A_8, \dots, A_{17}$. Recalling the inverse estimates \eqref{inv:est:Gal:app}, the following interpolation inequalities valid in two spatial dimensions:
\begin{align}\label{GN:ineq}
\| f \|_{L^4} \leq C \| f \|^{1/2} \| f \|_{H^1}^{1/2}, \quad \| f \|_{L^\infty} \leq C \| f \|^{1/2} \| f \|_{H^2}^{1/2},
\end{align}
as well as the estimates in  \eqref{CH:uni:est:Gal:diff:2Dstr} for $(\phi_k, \widehat{\phi}, \mu_k, \widehat{\mu})$, and the estimates \eqref{LinftyL2:bdd:Gal:app:bu:bo} and \eqref{L2V:bdd:Gal:app:time:d:bu} for $\bu^k$ and $\omega^k$, along with the fact that $\bm{U}_k, \widehat{\bm{U}}, \bu^k, \widehat{\bu} \in S \subset C^0([0,T];\mathbb{V}_{\bu,n})$, we have
\begin{align*}
|A_1| & \leq C \| \pd_t \phi_k \| \| \bu \|_{L^4}^2 \leq C_n \| \pd_t \phi_k \| \| \bu \|^2, \\
|A_2| & \leq C \| \phi \|_{L^4} \| \pd_t \widehat{\bu} \| \| \bu \|_{L^4} \leq C_n \| \pd_t \widehat{\bu} \|^2 \| \bu \|^2 + C_n \| \phi \|_{L^4}^2, \\
|A_3| & = | \tfrac{\overline{\rho}_1 - \overline{\rho}_2}{2} ( \phi (\bm{U}_k \cdot \nabla) \bu^k, \bu) + (\hat{\rho} (\bm{U} \cdot \nabla) \bu^k, \bu) + (\hat{\rho} (\widehat{\bm{U}} \cdot \nabla) \bu, \bu)| \\
& \leq C \| \phi \|_{L^4} \| \bm{U}_k \|_{L^\infty} \| \nabla \bu^k \| \| \bu \|_{L^4} + C \| \bm{U} \|_{L^4} \| \nabla \bu^k \| \| \bu \|_{L^4} + C \| \widehat{\bm{U}} \|_{L^4} \| \nabla \bu \| \| \bu \|_{L^4} \\
& \leq C_n M_0^2 \| \phi \|_{L^4} \| \bu \|_{L^2} + C_n M_0 \| \bm{U} \| \| \bu \| + C_n M_0 \| \bu \|^2 \\
& \leq C_n ( 1 + M_0^2) \| \bu \|^2 + C_n M_0^2 ( \| \phi \|_{L^4}^2 + \| \bm{U} \|^2 ), \\
|A_4| & \leq C\| \phi \|_{L^4} \| \D \widehat{\bu} \| \| \nabla \bu \|_{L^4} \leq C_n M_0^2 \| \bu \|^2 + C_n \| \phi \|_{L^4}^2, \\
|A_5| & \leq C_n M_0^2 \| \bu \|^2 + C_n \| \phi \|_{L^4}^2, \\
|A_6|& = |\tfrac{\overline{\rho}_1 - \overline{\rho}_2}{2} ((\nabla \mu_k \cdot \nabla) \bu + (\nabla \mu \cdot \nabla) \widehat{\bu}, \bu) | \\
& = |\tfrac{\overline{\rho}_1 - \overline{\rho}_2}{2} (( \mu_k \nabla \bu, \nabla \bu) + (\mu_k \bu, \Delta \bu)) + \tfrac{\overline{\rho}_1 - \overline{\rho}_2}{2} ( ( \mu \nabla \widehat{\bu}, \nabla \bu) + (\mu \bu, \Delta \widehat{\bu}))| \\
& \leq C \| \mu_k \| \| \nabla \bu \|_{L^4}^2 + C \| \mu_k \| \| \bu \|_{L^\infty} \| \Delta \bu \| + C \| \mu \| \| \nabla \widehat{\bu} \|_{L^4} \| \nabla \bu \|_{L^4} + C \| \mu \| \| \bu \|_{L^\infty} \| \Delta \widehat{\bu} \| \\
& \leq C_n (1 + M_0^2) \| \bu \|^2 + C_n \| \mu \|^2, \\
|A_7|& = |(-\phi \nabla \mu_k + \mu \nabla \widehat{\phi}, \bu)| \leq \| \phi \|_{L^4} \| \nabla \mu_k \| \| \bu \|_{L^4} + \| \mu \| \| \nabla \widehat{\phi} \| \| \bu \|_{L^\infty} \\
& \leq C_n \| \bu \|^2 + C_n ( \| \phi \|_{L^4}^2 + \| \mu \|^2), \\
|A_8| & = 2|(\eta_{r,k} \omega, \curl_2 \bu)| \leq C \| \omega \|^2 + C_n \| \bu \|^2, \\
|A_9| & = 2|(\eta_r \widehat{\omega}, \curl_2 \bu)| \leq C \| \phi \|_{L^4} \| \widehat{\omega} \|_{L^4} \| \curl_2 \bu \| \leq C_n \| \bu \|^2 + C_n \| \phi \|_{L^4}^2.
\end{align*}
The estimates for $A_{10}$, $A_{11}$, $A_{12}$, $A_{13}$ and $A_{15}$ proceed analogously to $A_{1}$, $A_{2}$, $A_{3}$, $A_{4}$ and $A_{6}$, respectively:
\begin{align*}
& |A_{10} + A_{11} + A_{12} + A_{13} + A_{15}| \\
& \quad \leq C_n (1 + \| \pd_t \phi_k \| + \| \pd_t \widehat{\omega} \|^2 ) \| \omega \|^2 + C_n \| \phi \|_{L^4}^2 + C_n \| \bm{U} \|^2 + C_n \| \mu \|^2.
\end{align*}
For the remaining terms, we have
\begin{align*}
|A_{14}| & \leq C \| \phi \|_{L^4} \| \widehat{\omega} \| \| \omega \|_{L^4} \leq C_n (\| \phi \|_{L^4}^2 + \| \omega \|^2), \\
|A_{16}| & \leq C \| \curl_2 \bu \| \| \omega \| \leq C_n ( \| \bu \|^2 + \| \omega \|^2), \\
|A_{17}| & \leq C \| \phi \|_{L^4} \| \curl_2 \widehat{\bu} \| \| \omega \|_{L^4} \leq C_n ( \| \phi \|_{L^4}^2 + \| \omega \|^2 ).
\end{align*}
Then, from \eqref{Gal:app:fixedpoint:cont:identity} we deduce the following differential inequality
\begin{align*}
\frac{d}{dt} \int_\Omega \rho_k (|\bu|^2 + |\omega|^2) \, dx \leq G_1(t) \int_\Omega \rho_k (|\bu|^2 + |\omega|^2) \, dx + G_2(t),
\end{align*}
where
\[
G_1(t) := C_n \Big ( 1 + \| \pd_t \phi_k \|^2 + \| \pd_t \widehat{\bu} \|^2 + \| \pd_t \widehat{\omega} \|^2 \Big ), \quad G_2(t) := C_n \Big ( 1 + \| \phi \|_{L^4}^2 + \| \mu \|^2 + \| \bm{U} \|^2 \Big )
\]
satisfy $G_1, G_2 \in L^1(0,T)$ thanks to \eqref{CH:uni:est:Gal:diff:2Dstr}, \eqref{L2V:bdd:Gal:app:time:d:bu} and \eqref{L2V:bdd:Gal:app:time:d:bo}. Gronwall's inequality and the lower bound on $\rho$ provide the estimate
\begin{align*}
\max_{t \in [0,T]} \Big ( \| \bu(t) \|^2 + \| \omega(t) \|^2 \Big ) \leq \frac{1}{\rho_*} \exp \Big (\int_0^T G_1(r) \, dr \Big ) \int_0^T G_2(r) \, dr.
\end{align*}
Then, as $\bm{U} = \bm{U}_k - \widehat{\bm{U}} \to \bm{0}$ in $C^0([0,T];\mathbb{V}_{\bu,n})$, it holds that the right-hand side of the above inequality tends to zero as $k \to \infty$. Hence, we deduce that $\bu = \bu^k - \widehat{\bu} \to \bm{0}$ in $C^0([0,T];\mathbb{V}_{\bu,n})$ and $\omega = \omega^k - \widehat{\omega} \to 0$ in $C^0([0,T];\mathbb{V}_{\omega,n})$, leading to the continuity of the mapping $\Lambda$.  Hence, for each $n \in \N$, we infer via Schauder's fixed point theorem the existence of an approximate solution quadruple $(\bu_n, \omega_n, \phi_n, \mu_n)$ to \eqref{equ:Gal:app:2Dstr}.
\end{proof}

\subsection{Uniform estimates}
In the sequel we use the symbol $C$ to denote positive constants independent of $n \in \N$, whose values may change line by line and even within the same line.

Integrating \eqref{equ:CH:Gal:app:2Dstr} over $\Omega$ yields the conservation of mass
\begin{align} \label{conser:mass:phi:uni:n}
\overline{\phi_{n}}(t) = \overline{\phi_{0}}, \quad \forall t \in [0,T].
\end{align}
Choosing $\bm{v} = \bu_n$ in \eqref{equ:bu:Gal:app:2Dstr}, $z = \omega_n$ in \eqref{equ:bo:Gal:app:2Dstr}, while testing \eqref{equ:CH:Gal:app:2Dstr} with $\mu_n$ and testing \eqref{equ:mu:Gal:app:2Dstr} with $\pd_t\phi_n$, then upon summing we obtain the energy identity
\begin{align*}
& \frac{d}{dt} \int_\Omega \frac{\rho(\phi_n)}{2} (|\bu_n|^2 + |\omega_n|^2) + F(\phi_n) + \frac{1}{2} a|\phi_n|^2 - \phi_n (K \star \phi_n) \, dx \\
& \quad + \int_\Omega 2 \eta(\phi_n) |\D \bu_n|^2 + 4 \eta_{r}(\phi_n) |\tfrac{1}{2} \curl_2 \bu_n - \omega_n|^2 + c_{d,a}(\phi_n)|\nabla \omega_n|^2 + |\nabla \mu_n|^2 \, dx = 0.
\end{align*}
From this we deduce that
\begin{align}\label{uni:n:bdd:bu:bo:LinfL2capL2H1:mu:L2H1homo}
\| \bu_n \|_{L^\infty(0,T;\HH_\sigma) \cap L^2(0,T;\HH^1_{\sigma})} + \| \omega_n \|_{L^\infty(0,T;L^2) \cap L^2(0,T;H^1)} + \| \nabla \mu_n \|_{L^2(Q)} \leq C.
\end{align}
Thanks to the embedding $L^\infty(0,T;\HH_\sigma) \cap L^2(0,T;\HH^1_{\sigma}) \subset L^4(0,T;L^4_{\sigma})$ in two spatial dimensions, we can invoke Lemma \ref{sec2prere:str:sol:reg:CH} to obtain analogous uniform estimates to \eqref{str:sol:CH:grad:mu:Linfty:L2:est}, \eqref{str:sol:tdphi:L2L2:CH:grad:mu:L2H1:est} and \eqref{str:sol:phi:mu:F:bdds} for $(\phi_n, \mu_n)$.

Next, we choose $\bm{v} = \pd_t \bu_n$ in \eqref{equ:bu:Gal:app:2Dstr} and $z = \pd_t \omega_n$ in \eqref{equ:bo:Gal:app:2Dstr}, so that upon summing leads to (cf.~\cite[(3.31)]{ChanLamstrsol})
\begin{equation}\label{equ:bu:plus:bo:H1:est:2Dstr}
\begin{aligned}
& \frac{d}{dt} \int_\Omega \eta(\phi_n) |\D \bu_n|^2 + 2\eta_r(\phi_n) |\tfrac{1}{2} \curl_2 \bu_n - \omega_n|^2 + c_{d,a}(\phi_n) |\nabla \omega_n|^2 \, dx  \\
& \qquad + \int_\Omega \rho(\phi_n) (|\pd_t \bu_n|^2  + |\pd_t \omega_n|^2 ) \, dx \\
& \quad = - (\rho(\phi_n) (\bu_n \cdot \nabla) \bu_n, \pd_t \bu_n) + (\eta'(\phi_n) \pd_t \phi_n, |\D \bu_n|^2) + (\eta_r'(\phi_n) \pd_t \phi_n, |\W \bu_n|^2) \\
& \qquad + \tfrac{\overline{\rho}_1 - \overline{\rho}_2}{2} ((\nabla \mu_n \cdot \nabla) \bu_n, \pd_t \bu_n) + (\mu_n \nabla \phi_n, \pd_t \bu_n) - (\rho(\phi_n) \bu_n \cdot \nabla \omega_n, \pd_t \omega_n)   \\
& \qquad + (c_{d,a}'(\phi_n) \pd_t \phi_n, |\nabla \omega_n|^2) + \tfrac{\overline{\rho}_1 - \overline{\rho}_2}{2} ( \nabla \mu_n \cdot \nabla \omega_n, \pd_t \omega_n) \\
& \qquad - (2 \eta_r'(\phi_n) \pd_t \phi_n \curl_2 \bu_n, \omega_n) + (2 \eta_r'(\phi_n) \pd_t \phi_n, |\omega_n|^2) \\
& \quad =: B_1 + \cdots + B_{10}.
\end{aligned}
\end{equation}
The derivation of the above identity relies on the following relation, see also \eqref{2D:curlu:w}:
\begin{align*}
& (2\eta_r(\phi_n) \W \bu_n, \W \pd_t \bu_n) - (2 \curl_1(\eta_r(\phi_n) \omega_n), \pd_t \bu_n) \\
& \qquad - (2 \eta_r(\phi_n) \curl_2 \bu_n, \pd_t \omega_n) + (4 \eta_r(\phi_n) \omega_n, \pd_t \omega_n) \\
& \quad = \frac{1}{2} \frac{d}{dt} \int_\Omega \eta_r(\phi_n) \Big [ |\curl_2 \bu_n|^2 - 4 \omega_n \curl_2 \bu_n  + 4 |\omega_n|^2] \, dx \\
& \qquad - (2 \eta_r'(\phi_n) \pd_t \phi_n, \tfrac{1}{2} |\W \bu_n|^2 - \omega_n \curl_2 \bu_n + |\omega_n|^2).
\end{align*}
Similarly, choosing $\bm{v} = \bm{A}_S \bu_n$ in \eqref{equ:bu:Gal:app:2Dstr}, where we recall that there exists a pressure $p_n \in C^0([0,T];H^1(\Omega))$ from the regularity theory involving the Stokes operator $\bm{A}_S$ such that $\bm{A}_S \bu_n = - \Delta \bu_n + \nabla p_n$, leads to (cf.~\cite[(3.36)]{ChanLamstrsol})
\begin{equation}\label{equ:bu:H2:est:2Dstr}
\begin{aligned}
& ((\eta(\phi_n) + \eta_r(\phi_n))\bm{A}_S \bu_n, \bm{A}_S \bu_n) \\
& \quad = - (\rho(\phi_n) \pd_t \bu_n, \bm{A}_S \bu_n) - (\rho(\phi_n) (\bu_n \cdot \nabla) \bu_n, \bm{A}_S \bu_n) \\
& \qquad + \tfrac{\overline{\rho}_1 + \overline{\rho}_2}{2} ((\nabla \mu_n \cdot \nabla) \bu_n, \bm{A}_S \bu_n) + (\mu_n \nabla \phi_n, \bm{A}_S \bu_n) \\
& \qquad + (2 \eta'(\phi_n) \nabla \phi_n \D \bu_n, \bm{A}_S \bu_n) + (2 \eta_r'(\phi_n) \nabla \phi_n \W \bu_n, \bm{A}_S \bu_n) \\
& \qquad + (2 \curl_1(\eta_r(\phi_n) \omega_n), \bm{A}_S \bu_n) - (p_n \eta'(\phi_n) \nabla \phi_n, \bm{A}_S \bu_n) \\
& \qquad - (p_n \eta_r'(\phi_n) \nabla \phi_n, \bm{A}_S \bu_n) \\
& \quad =: B_{11} + \cdots + B_{19},
\end{aligned}
\end{equation}
while choosing $z = - \Delta \omega_n$ in \eqref{equ:bo:Gal:app:2Dstr} leads to (cf.~\cite[(3.37)]{ChanLamstrsol})
\begin{equation}\label{equ:bo:H2:est:2Dstr}
\begin{aligned}
& (c_{d,a}(\phi_n) \Delta \omega_n, \Delta \omega_n) \\
& \quad =  (\rho(\phi_n) \pd_t \omega_n, \Delta \omega_n) + (\rho(\phi_n) \bu_n \cdot \nabla \omega_n, \Delta \omega_n) \\
& \qquad - \tfrac{\overline{\rho}_1 - \overline{\rho}_2}{2} (\nabla \mu_n \cdot \nabla \omega_n, \Delta \omega_n) - (c_{d,a}'(\phi_n) \nabla \phi_n \cdot \nabla \omega_n, \Delta \omega_n) \\
& \qquad - (2 \eta_r(\phi_n) \curl_2 \bu_n, \Delta \omega_n) + (4 \eta_r(\phi_n) \omega_n, \Delta \omega_n) \\
& \quad =: B_{20} + \cdots + B_{25}.
\end{aligned}
\end{equation}
We define the quantity
\begin{align*}
H_n(t) := \int_\Omega \eta(\phi_n) |\D \bu_n|^2 + 2 \eta_r(\phi_n) |\tfrac{1}{2} \curl_2 \bu_n - \omega_n|^2 + c_{d,a}(\phi_n) |\nabla \omega_n|^2 \,dx
\end{align*}
and let $\beta_1$ and $\beta_2$ be two positive constants yet to be determined. Multiplying \eqref{equ:bu:H2:est:2Dstr} by $\beta_1$, \eqref{equ:bo:H2:est:2Dstr} by $\beta_2$ and adding the resulting equalities to \eqref{equ:bu:plus:bo:H1:est:2Dstr}, and using the lower bounds of $\eta$, $\eta_r$, $c_{d,a}$ and $\rho$, we arrive at
\begin{equation}\label{equ:H2est:2Dstr:combined}
    \begin{aligned}
& \frac{d}{dt} H_n(t) + \rho_* (\| \pd_t \bu_n \|^2 + \| \pd_t \omega_n \|^2) + \beta_1 (\eta_* + \eta_{r,*}) \| \bm{A}_S \bu_n \|^2 + \beta_2 c_{d,a,*} \| \Delta \omega_n \|^2 \\
& \quad \leq B_1 + \cdots + B_{10} + \beta_1 ( B_{11} + \cdots + B_{19}) + \beta_2 (B_{20} + \cdots + B_{25}).
    \end{aligned}
\end{equation}
Exploiting the uniform estimates \eqref{str:sol:phi:mu:F:bdds}, \eqref{uni:n:bdd:bu:bo:LinfL2capL2H1:mu:L2H1homo}, the bound $|\phi_n| \leq 1$ a.e.~in $Q$, the interpolation inequalities \eqref{GN:ineq}, Korn's inequality, Poincar\'e's inequality, as well as the $L^4$-estimate \eqref{pressure:L4} for the pressure $p_n$ from Lemma \eqref{L4:pre:estim}, we now estimate the right-hand side of \eqref{equ:H2est:2Dstr:combined} term-by-term:
\begin{align*}
|B_1| & \leq C \| \bu_n \|_{L^4} \| \nabla \bu_n \|_{L^4} \| \pd_t \bu_n \| \leq C \| \D \bu_n \|\| \bm{A}_S \bu_n \|^{1/2}\| \pd_t \bu_n \| \\
& \leq \frac{\rho_*}{8} \| \pd_t \bu_n \|^2 + \frac{\beta_1 \eta_*}{22} \| \bm{A}_S \bu_n \|^2 + C \| \D \bu_n \|^4,  \\
|B_2| & \leq C \| \pd_t \phi_n \| \| \D \bu_n \|_{L^4}^2 \leq C \| \pd_t \phi_n \| \| \D\bu_n \| \| \bm{A}_S \bu_n \| \\
& \leq \frac{\beta_1 \eta_*}{22} \| \bm{A}_S \bu_n \|^2 + C \| \pd_t \phi_n \|^2 \| \D \bu \|^2, \\
|B_3| & \leq C \| \pd_t \phi_n \| \| \nabla \bu_n \|_{L^4}^2 \leq \frac{\beta_1 \eta_*}{22} \| \bm{A}_S \bu_n \|^2 + C \| \pd_t \phi_n \|^2 \| \D \bu_n \|^2, \\
|B_4| & \leq C \| \nabla \mu_n \|_{L^4} \| \nabla \bu_n \|_{L^4} \| \pd_t \bu_n \| \leq C \| \nabla \mu_n \|_{H^1}^{1/2} \| \D \bu_n \|^{1/2} \| \bm{A}_S \bu_n \|^{1/2} \| \pd_t \bu_n \| \\
& \leq \frac{\rho_*}{8} \| \pd_t \bu_n \|^2 + \frac{\beta_1 \eta_*}{22} \| \bm{A}_S \bu_n \|^2 + C \|\nabla \mu_n \|_{H^1}^2 \| \D \bu_n \|^2, \\
|B_5| & = |(\phi_n \nabla \mu_n, \pd_t \bu_n)| \leq \frac{\rho_*}{8} \| \pd_t \bu_n \|^2 + C \| \nabla \mu_n \|^2.
\end{align*}
Estimating $B_6$, $B_7$ and $B_8$ can proceed analogously as for $B_1$, $B_2$ and $B_4$:
\begin{align*}
|B_6| & \leq \frac{\rho_*}{8} \| \pd_t \omega_n \|^2 + \frac{\beta_2 c_{d,a,*}}{14} \| \Delta \omega_n \|^2 + C \|\nabla \omega_n \|^4, \\
|B_7| & \leq \frac{\beta_2 c_{d,a,*}}{14} \| \Delta \omega_n \|^2 + C \| \pd_t \phi_n \|^2 \| \nabla \omega_n \|^2, \\
|B_8| & \leq \frac{\rho_*}{8} \| \pd_t \omega_n \|^2 + \frac{\beta_2 c_{d,a,*}}{14} \|\Delta \omega_n \|^2 + C \| \nabla \mu_n \|_{H^1}^2 \| \nabla \omega_n \|^2.
\end{align*}
Meanwhile, we see that
\begin{align*}
|B_9| & \leq C \| \pd_t \phi_n \| \| \nabla \bu_n \|_{L^4} \| \omega_n \|_{L^4} \leq C \| \pd_t \phi_n \| \| \D \bu_n \|^{1/2} \| \bm{A}_S \bu_n \|^{1/2} \|\nabla \omega_n \| \\
& \leq \frac{\beta_1 \eta_*}{22} \| \bm{A}_S \bu_n \|^2 + C \Big ( 1 + \| \pd_t \phi_n \|^2 \Big ) \Big ( \| \D \bu_n \|^2 + \| \nabla \omega_n \|^2 \Big ), \\
|B_{10}| & \leq C \| \pd_t \phi_n \| \| \omega_n \|_{L^4}^2 \leq C \Big ( 1+ \| \pd_t \phi_n \|^2 \Big ) \| \nabla \omega_n \|^2, \\
|\beta_1 B_{11}| & \leq \frac{\rho_*}{8} \|\pd_t \bu_n \|^2 + \frac{2 \beta_1^2 (\rho^*)^2}{\rho_*} \| \bm{A}_S \bu_n \|^2.
\end{align*}
Similar to $B_1$, $B_4$ and $B_5$, we have
\begin{align*}
|\beta_1 B_{12}| & \leq C\| \bm{A}_S \bu_n \| \| \bu_n \|_{L^4} \| \nabla \bu_n \|_{L^4} \leq C \| \bm{A}_S \bu_n \|^{3/2} \| \D \bu_n \| \\
& \leq \frac{\beta_1 \eta_*}{22} \| \bm{A}_S \bu_n \|^2 + C \| \D \bu_n \|^4, \\
|\beta_1 B_{13}| & \leq C \| \nabla \mu_n \|_{L^4} \|\nabla \bu_n \|_{L^4} \| \bm{A}_S \bu_n \| \leq C \| \nabla \mu_n \|_{H^1}^{1/2} \| \D \bu_n \|^{1/2} \| \bm{A}_S \bu_n \|^{3/2} \\
& \leq \frac{\beta_1 \eta_*}{22} \| \bm{A}_S \bu_n \|^2 + C \| \nabla \mu_n \|_{H^1}^2 \| \D \bu_n \|^2, \\
|\beta_1 B_{14}| & = |\beta_1(\phi_n \nabla \mu_n, \bm{A}_S \bu_n )| \leq \frac{\beta_1 \eta_*}{22} \| \bm{A}_S \bu_n \|^2 + C \| \nabla \mu_n \|^2,
\end{align*}
while
\begin{align*}
|\beta_1 (B_{15} + B_{16})| & \leq C \| \nabla \phi_n \|_{L^4} \| \D \bu_n \|_{L^4} \| \bm{A}_S \bu_n \| \leq C \| \nabla \phi_n \|_{H^1}^{1/2} \| \D \bu_n \|^{1/2} \| \bm{A}_S \bu_n \|^{3/2} \\
& \leq \frac{\beta_1 \eta_*}{22} \| \bm{A}_S \bu_n \|^2 + C \| \nabla \phi_n \|_{H^1}^2 \| \D \bu_n \|^2.
\end{align*}
For $B_{17}$ we use the product rule
\begin{align*}
|\beta_1 B_{17}| & = |\beta_1 (2 \eta_r(\phi_n) \curl_1 \omega_n + \eta_r'(\phi_n) \omega_n \curl_1 \phi_n , \bm{A}_S \bu_n)| \\
& \leq C \| \bm{A}_S \bu_n \| (\| \nabla \omega_n \| + \| \omega_n \|_{L^4} \| \nabla \phi_n \|_{L^4}) \\
& \leq \frac{\beta_1 \eta_*}{22} \| \bm{A}_S \bu_n \|^2 + C (1 +  \| \nabla \phi_n \|_{H^1}^2) \| \nabla \omega_n \|^2,
\end{align*}
while for $B_{18}$ and $B_{19}$ we use the pressure estimate \eqref{pressure:L4}:
\begin{align*}
|\beta(B_{18} + B_{19})| & \leq C \| p_n \|_{L^4} \| \bm{A}_S \bu_n \| \| \nabla \phi_n \|_{L^4} \leq C \| \nabla \bu_n \|^{1/2} \| \bm{A}_S \bu_n \|^{3/2} \| \nabla \phi_n \|_{H^1}^{1/2}\\
& \leq \frac{\beta_1 \eta_*}{22} \| \bm{A}_S \bu_n \|^2 + C \| \nabla \phi_n \|_{H^1}^2 \| \D \bu_n \|^2.
\end{align*}
We observe that estimating $B_{20}$, $B_{21}$, $B_{22}$ and $B_{23}$ proceed analogously as for $B_{11}$, $B_{12}$, $B_{13}$ and $B_{15}$:
\begin{align*}
|\beta_2 B_{20}| & \leq \frac{\rho_*}{4} \| \pd_t \omega_n \|^2 + \frac{ \beta_2^2 (\rho^*)^2}{\rho_*} \| \Delta \omega_n \|^2, \\
|\beta_2 B_{21}| & \leq  C \| \Delta \omega_n \| \| \bu_n \|_{L^4} \| \nabla \omega_n \|_{L^4} \leq C \| \Delta \omega_n \|^{3/2} \| \D \bu_n \| \\
& \leq \frac{\beta_2 c_{d,a,*}}{14} \| \Delta \omega_n \|^2 + C \| \D \bu_n \|^4, \\
|\beta_2 B_{22}| & \leq \frac{\beta_2 c_{d,a,*}}{14} \| \Delta \omega_n \|^2 + C \| \nabla \mu_n \|_{H^1}^2 \| \nabla \omega_n \|^2, \\
|\beta_2 B_{23}| & \leq \frac{\beta_2 c_{d,a,*}}{14} \| \Delta \omega_n \|^2 + C \| \nabla \phi_n \|_{H^1}^2 \| \nabla \omega_n \|^2.
\end{align*}
Similar to $B_9$ and $B_{10}$, we deduce that
\begin{align*}
|\beta_2 (B_{24} + B_{25})| & \leq \frac{\beta_2 c_{d,a,*}}{14} \| \Delta \omega_n \|^2 + C (\| \D \bu_n \|^2 + \| \nabla \omega_n \|^2).
\end{align*}
Hence, we infer from \eqref{equ:H2est:2Dstr:combined} the differential inequality
\begin{align*}
& \frac{d}{dt} H_n(t) + \frac{\rho_*}{2} (\| \pd_t \bu_n \|^2 + \| \pd_t \omega_n \|^2) \\
& \qquad + \beta_1 \Big ( \frac{1}{2} - \frac{2 \beta_1 (\rho^*)^2}{\rho_*} \Big ) \| \bm{A}_S \bu_n \|^2 + \beta_2 \Big (\frac{1}{2} - \frac{\beta_2 (\rho^*)^2}{\rho_*} \Big ) \| \Delta \omega_n \|^2 \\
& \quad \leq C \Big ( 1 + \| \pd_t \phi_n \|^2 + \| \D \bu_n \|^2 + \| \nabla \omega_n \|^2 + \| \nabla \mu_n \|_{H^1}^2 \Big ) H_n(t) + C \| \nabla \mu_n \|^2 \\
& \quad =: G_n(t) H_n(t) + J_n(t).
\end{align*}
By the analogoue of \eqref{str:sol:tdphi:L2L2:CH:grad:mu:L2H1:est} for $(\phi_n, \mu_n)$, it holds that $G_n, J_n \in L^1(0,T)$ uniformly in $n$.  Hence, by Gronwall's inequality,
\begin{align}\label{strsol:diffineq}
H_n(t) \leq \Big ( H_n(0) + \int_0^T J_n(s) \, ds \Big ) \exp \Big ( \int_0^T G_n(r) \, dr \Big ), \quad \forall t \in [0,T].
\end{align}
By properties of the projection operators, it holds that
\[
|H_n(0)| \leq C \Big (\| \bu_0 \|_{\HH^1_{\sigma}}^2 + \| \omega_0 \|_{H^1}^2 \Big ),
\]
and so we obtain the uniform estimates:
\begin{equation} \label{high:est:bu:bo:uni:n:2Dstr}
\begin{aligned}
& \| \bu_{n} \|_{L^{\infty}(0,T;\HH^1_{\sigma}) \cap L^2(0,T;\HH^2_{\sigma}) \cap H^1(0,T;\HH_\sigma)} \\
& \quad + \| \omega_n \|_{L^\infty(0,T;H^1(\Omega)) \cap L^2(0,T;H^2(\Omega)) \cap H^1(0,T;L^2(\Omega))} \leq C.
\end{aligned}
\end{equation}

\subsection{Passing to the limit}
The above uniform estimates \eqref{str:sol:CH:grad:mu:Linfty:L2:est}, \eqref{str:sol:tdphi:L2L2:CH:grad:mu:L2H1:est}, \eqref{str:sol:phi:mu:F:bdds}, \eqref{conser:mass:phi:uni:n}, \eqref{uni:n:bdd:bu:bo:LinfL2capL2H1:mu:L2H1homo} and \eqref{high:est:bu:bo:uni:n:2Dstr} enable us to obtain a quadruple of limit functions $(\bu, \omega, \phi, \mu)$ such that along a nonrelabelled subsequence $n \to \infty$
\begin{align*}
\bu_n & \to \bu \text{ weakly* in } L^\infty(0,T;\HH^1_{\sigma}) \cap L^2(0,T;\HH^2{\div}) \cap H^1(0,T;\HH_\sigma), \\
\bu_n & \to \bu \text{ strongly in } L^2(0,T;\HH^1_{\sigma}), \\
\omega_n & \to \omega \text{ weakly* in } L^\infty(0,T;H^1(\Omega)) \cap L^2(0,T;H^2(\Omega)) \cap H^1(0,T;L^2(\Omega)), \\
\omega_n & \to \omega \text{ strongly in } L^2(0,T;H^1(\Omega)), \\
\phi_n & \to \phi \text{ weakly* in } L^\infty(0,T;H^1(\Omega)) \cap H^1(0,T;L^2(\Omega)) \cap W^{1,\infty}(0,T;H^1(\Omega)^*), \\
\phi_n & \to \phi \text{ weakly in } L^{2p/(p-2)}(0,T;W^{1,p}(\Omega)) , \quad \forall p \in (2,\infty), \\
 \phi_n & \to \phi \text{ strongly in } C^0([0,T];L^p(\Omega)) \quad \forall p \in [2,\infty), \\
\mu_n & \to \mu \text{ weakly* in } L^\infty(0,T;H^1(\Omega)) \cap L^2(0,T;H^2(\Omega)) \cap H^1(0,T;H^1(\Omega)^*), \\
\mu_n & \to \mu \text{ strongly in } L^2(0,T;H^1(\Omega)),\\
F'(\phi_n) & \to F'(\phi) \text{ weakly* in } L^\infty(0,T;H^1(\Omega)), 
\end{align*}
along with $|\phi| < 1$ a.e.~in $Q$. These convergences are sufficient to pass to the limit $n \to \infty$ in \eqref{equ:Gal:app:2Dstr}. To recover the pressure, we argue as in \cite{ChanLamstrsol,CHNS2Dstrsol} to deduce the existence of $p \in L^2(0,T;H^1_{(0)}(\Omega))$ such that 
\begin{align*}
\nabla p & = - \rho(\phi) (\pd_t \bu + (\bu \cdot \nabla) \bu) + \rho'(\phi) (\nabla \mu \cdot \nabla) \bu \\
& \quad + \div ( 2\eta(\phi) \D \bu + 2 \eta_r(\phi) \W \bu) + \mu \nabla \phi + 2 \curl_1(\eta_r(\phi) \omega) \text{ a.e.~in Q}.
\end{align*}
Hence, the limit quntuple $(\bu, \omega, p, \phi, \mu)$ constitutes a strong solution to nMAGG model as depicted in Theorem \ref{thm:strwellposed}.

\begin{remark}[Strong solutions under periodic boundary conditions]
The proof of Theorem \ref{thm:strwellposed} under periodic boundary conditions simplifies slightly in the following sense: we can choose $\{\bm{X}_j\}_{j \in \N}$ as the eigenfunctions of the Stokes operator augmented by the constant function, and instead of choosing $\bm{v} = \bm{A}_S \bu_n$ in \eqref{equ:bu:Gal:app:2Dstr} to obtain the identity \eqref{equ:bu:H2:est:2Dstr}, it is sufficient to take $\bm{v} = - \Delta \bu_n$, which in turn leads to the absence of the terms $B_{18}$ and $B_{19}$ involving the pressure $p_n$.
\end{remark}



\subsection{Uniqueness and continuous dependence}
We consider two sets of initial data $\{(\bu_0^i, \omega_0^i, \phi_0^i)\}_{i=1,2}$ with $\| \phi_0^i \|_{L^\infty(\Omega)} < 1$ for $i \in \{1,2\}$, and the strong solutions $\{(\bu_i, \omega_i, p_i, \phi_i, \mu_i)\}_{i=1,2}$ emanating from them. Denoting $(\bu, \omega, p, \phi, \mu)$ as their differences, we further set $g_i = g(\phi_i)$ for $g \in \{ \rho, \eta, \eta_r, c_{d,a}\}$ and see that the differences satisfy
\begin{subequations}
\begin{alignat}{2}
\label{2Dstrsol:uni:diff:equ:bu} & \rho_1 \pd_t \bu + (\rho_1 - \rho_2) \pd_t \bu_2 + \rho_1 (\bu_1 \cdot \nabla) \bu + \rho_1 (\bu \cdot \nabla) \bu_2 + (\rho_1 - \rho_2) (\bu_2 \cdot \nabla) \bu_2 \\
\notag & \qquad - \tfrac{\overline{\rho}_1 - \overline{\rho}_2}{2} ((\nabla \mu_1 \cdot \nabla) \bu + (\nabla \mu \cdot \nabla)\bu_2) - 2\div ( \eta_1 \D \bu + \eta_{r,1} \W \bu) \\
\notag & \qquad - 2 \div ( (\eta_1 - \eta_2) \D \bu_2 + (\eta_{r,1} - \eta_{r,2}) \W \bu_2) + \nabla p \\
\notag & \quad = \mu_1 \nabla \phi + \mu \nabla \phi_2 + 2 \curl_1( \eta_{r,1} \omega + (\eta_{r,1} - \eta_{r,2}) \omega_2), \\
\label{2Dstrsol:uni:diff:equ:bo} & \rho_1 \pd_t \omega + (\rho_1 - \rho_2) \pd_t \omega_2 + \rho_1 \bu_1 \cdot \nabla \omega + \rho_1 \bu \cdot \nabla \omega_2 + (\rho_1 - \rho_2) \bu_2 \cdot \nabla \omega_2 \\
\notag & \qquad - \tfrac{\overline{\rho}_1 - \overline{\rho}_2}{2} (\nabla \mu_1 \cdot \nabla \omega + \nabla \mu \cdot \nabla \omega_2) - \div (c_{d,a,1} \nabla \omega + (c_{d,a,1} - c_{d,a,2}) \nabla \omega_2)  \\
\notag & \quad = 2 \eta_{r,1} (\curl_2 \bu - 2 \omega) + 2 (\eta_{r,1} - \eta_{r,2})(\curl_2 \bu_2 - 2 \omega_2), \\
\label{2Dstrsol:uni:diff:equ:CH} & \pd_t \phi + \bu_1 \cdot \nabla \phi + \bu \cdot \nabla \phi_2 = \Delta \mu, \\
\label{2Dstrsol:uni:diff:equ:mu} & \mu = F'(\phi_1) - F'(\phi_2) + a \phi - K \star \phi,
\end{alignat}
\end{subequations}
a.e.~in $Q$.  Observing the following identities
\begin{subequations}\label{uniq:id}
\begin{alignat}{2}
\label{uniq:id:1} 0 & = \int_\Omega -\frac{|\bu|^2}{2} \pd_t \rho_1 + \rho_1 (\bu_1 \cdot \nabla) \bu \cdot \bu - \frac{\overline{\rho}_1 - \overline{\rho}_2}{2} (\nabla \mu_1 \cdot \nabla ) \bu \cdot \bu \, dx, \\
0 & = \int_\Omega -\frac{|\omega|^2}{2} \pd_t \rho_1 + \rho_1 (\bu_1 \cdot \nabla \omega)\omega - \frac{\overline{\rho}_1 - \overline{\rho}_2}{2} (\nabla \mu_1 \cdot \nabla \omega) \omega \, dx, \\
0 & = \int_\Omega (\nabla \mu \cdot \nabla) \bu_2 \cdot \bu + \mu \Delta \bu_2 \cdot \bu + \mu \nabla \bu_2 : \nabla \bu \, dx, \\
0 & = \int_\Omega (\nabla \mu \cdot \nabla \omega_2) \omega + \mu \Delta \omega_2 \omega + \mu \nabla \omega_2 \cdot \nabla \omega \, dx,
\end{alignat}
\end{subequations}
along with 
\begin{equation}\label{uniq:id3}
\begin{aligned}
& (\mu_1 \nabla \phi + \mu \nabla \phi_2 , \bu) = - (\nabla \mu_1, \phi \bu) - (\nabla \mu, \phi_2 \bu) \\
& \quad = - ((a + F''(\phi_1) \nabla \phi_1, \phi \bu) - (\phi_1 \nabla a, \phi \bu) + ( \nabla K \star \phi_1, \phi \bu) \\
& \qquad - ((a + F''(\phi_1)) \nabla \phi, \phi_2 \bu) - ((F''(\phi_1) - F''(\phi_2)) \nabla \phi_2, \phi_2 \bu) \\
& \qquad- (\phi \nabla a, \phi_2 \bu) + (\nabla K \star \phi, \phi_2 \bu), 
\end{aligned}
\end{equation}
and 
\begin{equation}\label{uniq:id4}
\begin{aligned}
(\nabla \mu, \nabla \phi) & = (a + F''(\phi_1), |\nabla \phi|^2) + (F''(\phi_1) - F''(\phi_2), \nabla \phi_2 \cdot \nabla \phi) \\
& \quad - (\nabla K \star \phi, \nabla \phi) - (\phi \nabla a, \nabla \phi),
\end{aligned}
\end{equation}
we then obtain after testing \eqref{2Dstrsol:uni:diff:equ:bu} by $\bu$, \eqref{2Dstrsol:uni:diff:equ:bo} by $\omega$, \eqref{2Dstrsol:uni:diff:equ:CH} by $\phi$ and summing:
\begin{equation}\label{2Dstrsol:uni:ineq}
\begin{aligned}
& \frac{1}{2} \frac{d}{dt} \int_\Omega \rho_1 (|\bu|^2 + |\omega|^2) + |\phi|^2 \, dx \\
& \qquad + \int_\Omega 2 \eta_1 |\D \bu|^2 + 4 \eta_{r,1} |\tfrac{1}{2} \curl_2 \bu - \omega|^2 + c_{d,a,1}|\nabla \omega|^2 + (a + F''(\phi_1)) |\nabla \phi|^2 \, dx  \\
& \quad = - ((\rho_1 - \rho_2)\pd_t \bu_2, \bu) - (\rho_1 (\bu \cdot \nabla) \bu_2, \bu) - ((\rho_1 - \rho_2) (\bu_2 \cdot \nabla) \bu_2, \bu) \\
& \qquad - \tfrac{\overline{\rho}_1 - \overline{\rho}_2}{2} ( \mu \Delta \bu_2, \bu) - \tfrac{\overline{\rho}_1 - \overline{\rho}_2}{2} (\mu \nabla \bu_2, \nabla \bu) - 2 ((\eta_1 - \eta_2) \D \bu_2, \nabla \bu) \\
& \qquad - 2((\eta_{r,1} - \eta_{r,2}) \W \bu_2, \nabla \bu)  - ((a + F''(\phi_1)) \nabla \phi_1, \phi \bu) \\
& \qquad - (\phi_1 \nabla a, \phi \bu) + ( \nabla K \star \phi_1, \phi \bu) - ((a + F''(\phi_1)) \nabla \phi, \phi_2 \bu) \\
& \qquad - ((F''(\phi_1) - F''(\phi_2)) \nabla \phi_2, \phi_2 \bu) - (\phi \nabla a, \phi_2 \bu)  \\
& \qquad + (\nabla K \star \phi, \phi_2 \bu) + 2 ((\eta_{r,1} - \eta_{r,2}) \omega_2, \curl_2 \bu) \\
& \qquad - ((\rho_1 - \rho_2) \pd_t \omega_2, \omega) - (\rho_1 \bu \cdot \nabla \omega_2, \omega) - ((\rho_1 - \rho_2) \bu_2 \cdot \nabla \omega_2, \omega) \\
& \qquad - \tfrac{\overline{\rho}_1 - \overline{\rho}_2}{2} (\mu \Delta \omega_2, \omega) -\tfrac{\overline{\rho}_1 - \overline{\rho}_2}{2} (\mu \nabla \omega_2, \nabla \omega) + ((c_{d,a,1} - c_{d,a,2}) \nabla \omega_2, \nabla \omega) \\
& \qquad + 2((\eta_{r,1} - \eta_{r,2}) (\curl_2 \bu_2 - 2 \omega_2), \omega) - (\bu \cdot \nabla \phi_2, \phi) \\
& \qquad - (F''(\phi_1) - F''(\phi_2), \nabla \phi_2 \cdot \nabla \phi) + (\nabla K \star \phi, \nabla \phi) + (\phi \nabla a, \nabla \phi) \\
& \quad =: D_{1} + \cdots + D_{26}.
\end{aligned}
\end{equation}
Recalling the positive constant $\hat{\alpha}$ in \eqref{sing:pot:ass5}, we can arguing similarly as in Section 6 of \cite{CCGAGMG} for $D_1, \dots, D_6$:
\begin{align*}
|D_1|& \leq C \| \phi \|_{L^4} \| \pd_t \bu_2 \| \| \bu \|_{L^4} \leq C\| \pd_t \bu_2 \| \| \phi \|^{1/2} ( \| \phi \|^{1/2} + \| \nabla \phi \|^{1/2}) \| \bu \|^{1/2} \| \D \bu \|^{1/2} \\
& \leq \frac{\eta_*}{10} \| \D \bu \|^2 + \frac{\hat{\alpha}}{30} \| \nabla \phi \|^2 + C( 1+ \| \pd_t \bu_2 \|^2) ( \| \phi \|^2 + \| \bu \|^2), \\
|D_2| & \leq C \| \bu \|_{L^4} \| \nabla \bu_2 \| \| \bu \|_{L^4} \leq \frac{\eta_*}{10} \| \D \bu \|^2 + C \| \bu \|^2, \\
|D_3|& \leq C \| \phi \|_{L^4} \| \bu_2 \|_{L^\infty} \| \nabla \bu_2 \| \| \bu \|_{L^4} \leq C(\| \phi \| + \| \nabla \phi \|) \| \bu_2 \|_{H^2}^{1/2} \| \bu \|^{1/2} \| \D \bu \|^{1/2} \\
& \leq \frac{\eta_*}{10} \| \D \bu \|^2 + \frac{\hat{\alpha}}{30} \| \nabla \phi \|^2 + C (1 +\| \bu_2 \|_{H^2}^{2}) (\| \bu \|^2 + C \| \phi \|^2), \\
|D_4| & \leq C|( (F'(\phi_1) - F'(\phi_2)) \Delta \bu_2, \bu) + (a \phi \Delta \bu_2, \bu) - ((K \star \phi ) \Delta \bu_2, \bu)| \\
& \leq C \| \phi \|_{L^4} \| \Delta \bu_2 \| \| \bu \|_{L^4} \\
& \leq C \| \phi \|^{1/2}( \| \phi \|^{1/2} + \| \nabla \phi \|^{1/2}) \| \Delta \bu_2 \| \| \bu \|^{1/2} \| \D \bu \|^{1/2} \\
& \leq \frac{\eta_*}{10} \| \D \bu \|^2 + \frac{\hat{\alpha}}{30} \| \nabla \phi \|^2 + C( 1 + \| \Delta \bu_2 \|^2)( \| \phi \|^2 + \| \bu \|^2), \\
|D_5| & \leq C |((F'(\phi_1) - F'(\phi_2)) \nabla \bu_2, \nabla \bu) + (a \phi \nabla \bu_2, \nabla\bu) - ((K \star \phi) \nabla \bu_2, \nabla \bu)| \\
& \leq C \| \phi \|_{L^4} \| \nabla \bu_2 \|_{L^4} \| \nabla \bu \|  \leq C \| \phi \|^{1/2} ( \| \phi \|^{1/2} + \| \nabla \phi \|^{1/2}) \| \nabla \bu_2 \|_{L^4}  \| \D \bu \| \\
& \leq \frac{\eta_*}{10} \| \D \bu \|^2 + \frac{\hat{\alpha}}{30} \| \nabla \phi \|^2 + C( 1 + \| \nabla \bu_2 \|_{L^4}^4) \| \phi \|^2, \\
|D_6| & \leq C \| \phi \|_{L^4} \| \D \bu_2 \|_{L^4} \| \nabla \bu \| \leq \frac{\eta_*}{10} \| \D \bu \|^2 + \frac{\hat{\alpha}}{30} \| \nabla \phi \|^2 + C( 1 + \| \nabla \bu_2 \|_{L^4}^4) \| \phi \|^2.
\end{align*}
For $D_7$ we proceed in the same fashion as $D_6$:
\[
|D_7| \leq C \| \phi \|_{L^4} \| \W \bu_2 \|_{L^4} \| \nabla \bu \| \leq \frac{\eta_*}{10}\| \D \bu \|^2 + \frac{\hat{\alpha}}{30} \| \nabla \phi \|^2 + C( 1 + \| \nabla \bu_2 \|_{L^4}^4) \| \phi \|^2.
\]
By the strict separation property of $\phi_i$, we see that
\begin{align*}
|D_8| & \leq C \|\nabla \phi_1 \| \| \bu \|_{L^4} \| \phi \|_{L^4} \leq C \| \nabla \phi_1 \| \| \phi \|^{1/2} ( \| \phi \|^{1/2} + \| \nabla \phi \|^{1/2}) \| \bu \|^{1/2} \| \D \bu \|^{1/2} \\
& \leq \frac{\eta_*}{10} \| \D \bu \|^2 + \frac{\hat{\alpha}}{30} \| \nabla \phi \|^2 + C (\| \phi \|^2 + \| \bu \|^2), \\
|D_9| + |D_{10}| & \leq C (\| \phi \|^2 + \| \bu \|^2), \\
|D_{11}| & \leq \frac{\hat{\alpha}}{30} \| \nabla \phi \|^2 + C \| \bu \|^2, \\
|D_{12}| & \leq C \| \phi \|_{L^4} \| \nabla \phi_2 \| \| \bu \|_{L^4} \leq C \| \phi \|^{1/2}( \| \phi \|^{1/2} + \| \nabla \phi \|^{1/2}) \| \bu \|^{1/2} \| \D \bu \|^{1/2} \\
& \leq \frac{\eta_*}{10} \| \D \bu \|^2 + \frac{\hat{\alpha}}{30} \| \nabla \phi \|^2 + C (\| \phi \|^2 + \| \bu \|^2), \\
|D_{13}| + |D_{14}| & \leq C (\| \phi \|^2 + \| \bu \|^2).
\end{align*}
We note that the estimation of $D_{16}, \dots, D_{21}$ proceed analogously as for $D_{1}, \dots, D_6$, and so 
\begin{align*}
 & |D_{16} + \cdots + D_{21}| \\
 & \quad \leq \frac{c_{d,a,*}}{2} \| \nabla \omega \|^2 + \frac{\hat{\alpha}}{30} \| \nabla \phi \|^2 + C(1 + \| \pd_t \omega_2 \|^2 + \| \Delta \omega_2 \|^2 + \| \nabla \omega_2 \|_{L^4}^4) (\| \phi \|^2 + \| \omega \|^2).
\end{align*}
For $D_{23}, \dots, D_{26}$ we have
\begin{align*}
|D_{23}|& = |(\bu \cdot \nabla \phi, \phi_2)| \leq \frac{\hat{\alpha}}{30} \| \nabla \phi \|^2 + C \| \bu \|^2, \\
|D_{24}| & \leq C \| \phi \|_{L^4} \| \nabla \phi_2 \|_{L^4} \| \nabla \phi \| \leq C \| \phi \|^{1/2}( \| \phi \|^{1/2} + \| \nabla \phi \|^{1/2}) \| \nabla \phi_2 \|_{L^4} \| \nabla \phi \| \\
& \leq \frac{\hat{\alpha}}{30} \| \nabla \phi \|^2 + C( 1 + \| \nabla \phi_2 \|_{L^4}^4) \| \phi \|^2, \\
|D_{25} + D_{26}| & \leq \frac{\hat{\alpha}}{30} \|\nabla \phi \|^2 + C \| \phi \|^2.
\end{align*}
Lastly, we look at the remaining terms $D_{15}$ and $D_{22}$:
\begin{align*}
|D_{15}| & \leq C \| \phi \|_{L^4} \| \omega_2 \|_{L^4} \| \nabla \bu \| \leq C \| \phi \|^{1/2}( \| \phi \|^{1/2} + \| \nabla \phi \|^{1/2}) \| \omega_2 \|_{L^4} \| \D \bu \| \\
& \leq \frac{\eta_*}{10} \| \D \bu \|^2 + \frac{\hat{\alpha}}{30} \| \nabla \phi \|^2 + C (1 + \| \omega_2 \|_{L^4}^4) \| \phi \|^2, \\
|D_{22}| & \leq C \| \phi \|_{L^4} ( \| \nabla \bu_2 \|_{L^4} + \| \omega_2 \|_{L^4}) \| \omega \|\\
&\leq C \| \phi \|^{1/2}( \| \phi \|^{1/2} + \| \nabla \phi \|^{1/2})( \| \nabla \bu_2 \|_{L^4} + \| \omega_2 \|_{L^4}) \| \omega \| \\
& \leq \frac{\hat{\alpha}}{30} \| \nabla \phi \|^2 + C(1 + \| \nabla \bu_2 \|_{L^4}^4 + \| \omega_2 \|_{L^4}^4) ( \| \phi \|^2 + \| \omega \|^2).
\end{align*}
Consequently, from \eqref{2Dstrsol:uni:ineq} we infer the differential inequality
\begin{align*}
& \frac{1}{2} \frac{d}{dt} \int_\Omega \rho_1( |\bu|^2 + |\omega|^2) + |\phi |^2 \, dx \\
& \qquad + \int_\Omega \eta_* | \D \bu |^2 + 4 \eta_{r,*} |\tfrac{1}{2} \curl_2 \bu - \omega|^2 + \frac{c_{d,a,*}}{2} |\nabla \omega|^2 + \frac{\hat{\alpha}}{2} |\nabla \phi |^2 \, dx \\
& \quad \leq C \mathcal{M}(t) \int_\Omega \frac{\rho_1}{2} (| \bu |^2 + | \omega |^2) + \| \phi \|^2 \, dx
\end{align*}
where
\[
\mathcal{M}(t) := (1 + \| \pd_t \bu_2 \|^2 + \| \bu \|_{H^2}^2 + \| \nabla \bu_2 \|_{L^4}^4 +\| \pd_t \omega_2 \|^2 +  \| \omega_2 \|_{H^2}^2 + \| \nabla \omega_2 \|_{L^4}^4 +  \| \nabla \phi_2 \|_{L^4}^4)(t).
\]
As $\mathcal{M} \in L^1(0,T)$, it follows by Gronwall's inequality that we obtain continuous dependence of the strong solutions on the initial data, and subsequently the uniqueness of strong solutions is established.

\section{Consistency estimates with nonlocal nonpolar models}\label{sec:consistency}
In this section we consider $\eta_r(\cdot)$ to be a finite positive scalar $\eta_r > 0$ and aim to derive estimates for the difference between the strong solutions of the nMAGG model and of the other nonlocal nonpolar models in terms of $\eta_r$.
\subsection{Consistency with nonlocal AGG solutions}
The well-posedness of global strong solutions to the nonlocal AGG (nAGG) model in two spatial dimensions (consisting of \eqref{2D:nonlocal:model:equ:div:0}, \eqref{2D:nonlocal:model:equ:bu} with $\eta_r = 0$, \eqref{2D:nonlocal:model:equ:CH} with $m(\phi) = 1$ and \eqref{2D:nonlocal:model:equ:mu}) has been established in Theorem 1.4 of \cite{CCGAGMG}. We remark that the regularities of nAGG strong solutions are the same as those in Theorem~\ref{thm:strwellposed}. The consistency estimates between nMAGG strong solutions and nAGG strong solutions are formulated as follows.
\begin{thm}\label{thm:nonpolar:nAGG}
For fixed $T> 0$ let $(\bu_a,p_a, \phi_a, \mu_a)$ denote the unique strong solution on $[0,T]$ to the nAGG model corresponding to initial data $(\bu_a(0), \phi_a(0)) = (\bu_0, \phi_0)$, and let $(\bu_w, \omega_w, p_w, \phi_w, \mu_w)$ denote the unique strong solution on $[0,T]$ to the nMAGG model \eqref{nonlocal:model:equ:2D} with $m(\phi) = 1$ and $\eta_r(\cdot) = \eta_r$ corresponding to the initial data $(\bu_w(0), \omega_w(0), \phi_w(0)) = (\bu_0, 0, \phi_0)$. Then, there exists a constant $C>0$ depending on the norms of the initial data, the terminal time $T$ and the parameters of the systems, but independent of $\eta_r$, such that
\begin{align}\label{nAGG:consist}
\sup_{t \in (0,T]} \Big ( \| \bu_w(t) - \bu_a(t) \|^2 + \| \omega_w(t) \|^2 + \| \phi_w(t) - \phi_a(t) \|^2 \Big ) \leq C \eta_r.
\end{align}
\end{thm}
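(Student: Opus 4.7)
The plan follows the strategy of \cite{ChanLamstrsol} for the local MAGG--AGG comparison, adapted to the nonlocal setting. The first step is to derive an $\eta_r$-explicit bound on $\omega_w$ alone. Testing the nMAGG micro-rotation equation with $\omega_w$ and exploiting $\div\bu_w = 0$ together with the Cahn--Hilliard equation for $\phi_w$ (which combines the time-derivative, convection and $\bm{J}$-transport contributions into $\tfrac{d}{dt}\int\tfrac{\rho(\phi_w)}{2}|\omega_w|^2\,dx$, cf.~the derivation of \eqref{Gal:energyId}) gives
\begin{align*}
\tfrac{1}{2}\tfrac{d}{dt}\int_\Omega\rho(\phi_w)|\omega_w|^2\,dx + c_{d,a,*}\|\nabla\omega_w\|^2 + 4\eta_r\|\omega_w\|^2 \leq 2\eta_r(\curl_2\bu_w,\omega_w).
\end{align*}
Young's inequality, the uniform bound on $\|\curl_2\bu_w\|_{L^2(0,T;L^2)}$ from Theorem~\ref{thm:strwellposed}, and $\omega_w(0)=0$ then yield
\begin{align}\label{omegaw:apriori}
\sup_{t\in[0,T]}\|\omega_w(t)\|^2 + \int_0^T\|\nabla\omega_w(s)\|^2\,ds \leq C\eta_r,
\end{align}
which already accounts for the $\omega_w$-contribution in \eqref{nAGG:consist}.

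The second step sets up the difference variables $\bu:=\bu_w-\bu_a$, $\phi:=\phi_w-\phi_a$, $\mu:=\mu_w-\mu_a$, $p:=p_w-p_a$, all of which vanish at $t=0$. The momentum-difference equation inherits every coupling already encountered in the uniqueness calculation of Section~\ref{sec:str}, together with two extra $\eta_r$-dependent source terms, namely $-2\div(\eta_r \W\bu_w)$ and $2\curl_1(\eta_r\omega_w)$. The nonlocal chemical-potential difference $\mu = F'(\phi_w)-F'(\phi_a) + a\phi - K\star\phi$ is structurally identical to the one treated in Section~\ref{sec:str}. I would test the momentum-difference equation with $\bu$ and the Cahn--Hilliard-difference equation with $\phi$, invoking the cancellations \eqref{uniq:id}--\eqref{uniq:id4} to reproduce the uniqueness estimates $D_1,\dots,D_{14}$ and $D_{23}$--$D_{26}$; the crucial ingredient is the strict separation of both $\phi_w,\phi_a$ from $\pm 1$ guaranteed by Theorem~\ref{thm:strwellposed}, which keeps $F''(\phi_i)\in L^\infty(0,T;L^p(\Omega))$ and produces the coercive quantity $\int(a+F''(\phi_w))|\nabla\phi|^2\,dx\geq\hat\alpha\|\nabla\phi\|^2$ via \eqref{sing:pot:ass5}.

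The two genuinely new terms, after integration by parts using \eqref{IBP:curl:2D}, are $-2\eta_r(\W\bu_w,\W\bu)$ and $2\eta_r(\omega_w,\curl_2\bu)$, which I would bound by Young's inequality as $\tfrac{\eta_*}{2}\|\D\bu\|^2 + C\eta_r^2\|\W\bu_w\|^2 + C\eta_r\|\omega_w\|^2$; the first is absorbed into the dissipation $\int 2\eta(\phi_w)|\D\bu|^2\,dx$, while the other two are $O(\eta_r^2)$ by the uniform bound on $\W\bu_w$ and by \eqref{omegaw:apriori}. Summing yields a differential inequality of the form
\begin{align*}
\tfrac{d}{dt}\mathcal{X}(t) + c\bigl(\|\D\bu\|^2+\|\nabla\phi\|^2\bigr) \leq M(t)\mathcal{X}(t) + C\eta_r,\qquad \mathcal{X}(t):=\tfrac{1}{2}\int_\Omega\rho(\phi_w)|\bu|^2\,dx+\tfrac{1}{2}\|\phi\|^2,
\end{align*}
with $M\in L^1(0,T)$ controlled by the strong-solution norms of Theorem~\ref{thm:strwellposed}. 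Gronwall's lemma with $\mathcal{X}(0)=0$ then yields $\mathcal{X}(t)\leq C\eta_r$, and combining with \eqref{omegaw:apriori} delivers \eqref{nAGG:consist}. The main obstacle will be bookkeeping rather than technique: one must preserve the cancellation structure \eqref{uniq:id3}--\eqref{uniq:id4} that converts $(\mu_w\nabla\phi_w-\mu_a\nabla\phi_a,\bu)$ together with $-(\nabla\mu,\nabla\phi)$ into the sign-definite coercive term $\int(a+F''(\phi_w))|\nabla\phi|^2\,dx$, so that $\eta_r$ enters only through the two explicit source contributions identified above and not through modifications of this cancellation.
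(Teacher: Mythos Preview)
Your proposal is correct, but it takes a genuinely different route from the paper's proof. You split the argument into two stages: first you isolate the estimate $\sup_t\|\omega_w\|^2+\int_0^T\|\nabla\omega_w\|^2\,ds\le C\eta_r$ by testing the micro-rotation equation alone, and then you run the $(\bu,\phi)$ uniqueness estimate with the two $\eta_r$-source terms $-2\eta_r(\W\bu_w,\W\bu)$ and $2\eta_r(\omega_w,\curl_2\bu)$ treated as forcing. The paper instead sets $(\bu_2,\omega_2)=(\bu_a,0)$ and reuses the full uniqueness calculation in one pass: testing the $\bu$-difference, the $\omega_w$-equation and the $\phi$-difference simultaneously produces the combined dissipation $4\eta_r|\tfrac12\curl_2\bu_w-\omega_w|^2$ on the left, and the only new right-hand-side contribution becomes $E_7+E_{11}=2\eta_r(\tfrac12\curl_2\bu_w-\omega_w,\curl_2\bu_a)$, half of which is absorbed into that very dissipation, leaving a clean $C\eta_r\|\nabla\bu_a\|^2$ remainder. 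Your modular approach is conceptually transparent and in fact yields the sharper bound $\|\bu_w-\bu_a\|^2+\|\phi_w-\phi_a\|^2=O(\eta_r^2)$ for the velocity and phase-field differences (the $O(\eta_r)$ in \eqref{nAGG:consist} is then forced only by $\|\omega_w\|^2$); the paper's approach is more economical since it literally specialises the uniqueness proof. Two small corrections: your stated bound should read $C\eta_r^2\|\omega_w\|^2$ rather than $C\eta_r\|\omega_w\|^2$ (standard Young with the $\|\D\bu\|^2$ absorption), and when absorbing $\|\W\bu\|^2$ or $\|\curl_2\bu\|^2$ into $\|\D\bu\|^2$ you must account for the Korn constant in choosing the Young parameter.
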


\begin{proof}
Returning to the setting of the proof of continuous dependence and uniqueness, we set $(\bu_1, \omega_1, p_1, \phi_1, \mu_1) = (\bu_w, \omega_w, p_w, \phi_w,\mu_w)$ and $(\bu_2, \omega_2, p_2, \phi_2, \mu_2) = (\bu_a, 0, p_a, \phi_a, \mu_a)$. Then, the differences $(\bu, \omega_w, p, \phi, \mu)$ satisfy
\begin{subequations}
\begin{alignat}{2}
\label{nAGG:diff:equ:bu} & \rho_w \pd_t \bu + (\rho_w - \rho_a) \pd_t \bu_a + \rho_w (\bu_w \cdot \nabla) \bu + \rho_w (\bu \cdot \nabla) \bu_a + (\rho_w - \rho_a) (\bu_a \cdot \nabla) \bu_a \\
\notag & \qquad - \tfrac{\overline{\rho}_1 - \overline{\rho}_2}{2} ((\nabla \mu_w \cdot \nabla) \bu + (\nabla \mu \cdot \nabla)\bu_a) - 2\div ( \eta_w \D \bu + (\eta_w - \eta_a) \D \bu_a) + \nabla p \\
\notag & \quad = 2 \eta_r \div (\W \bu_w) + \mu_w \nabla \phi + \mu \nabla \phi_a + 2 \eta_r \curl_1 \omega_w, \\
\label{nAGG:diff:equ:bo} & \rho_w \pd_t \omega_w + \rho_w \bu_w \cdot \nabla \omega_w - \tfrac{\overline{\rho}_1 - \overline{\rho}_2}{2} \nabla \mu_w \cdot \nabla \omega_w - \div (c_{d,a}(\phi_w) \nabla \omega_w)  \\
\notag & \quad = 2 \eta_{r} (\curl_2 \bu_w - 2 \omega_w), \\
\label{nAGG:uni:diff:equ:CH} & \pd_t \phi + \bu_w \cdot \nabla \phi + \bu \cdot \nabla \phi_a = \Delta \mu, \\
\label{nAGG:uni:diff:equ:mu} & \mu = F'(\phi_w) - F'(\phi_a) + a \phi - K \star \phi.
\end{alignat}
\end{subequations}
We note that the analogues of the identities \eqref{uniq:id}, \eqref{uniq:id3} and \eqref{uniq:id4} remain valid in our setting, and hence upon testing \eqref{nAGG:diff:equ:bu} by $\bu$, \eqref{nAGG:diff:equ:bo} by $\omega_w$, \eqref{nAGG:uni:diff:equ:CH} by $\phi$ and summing leads to 
\begin{equation}\label{nAGG:uni:ineq}
    \begin{aligned}
& \frac{1}{2} \frac{d}{dt} \int_\Omega \rho_w (|\bu|^2 + |\omega_w|^2) + |\phi|^2 \, dx \\
& \qquad + \int_\Omega 2 \eta_a |\D \bu|^2 + 4 \eta_{r} |\tfrac{1}{2} \curl_2 \bu_w - \omega_w|^2 + c_{d,a}(\phi_w)|\nabla \omega_w|^2 + (a + F''(\phi_w)) |\nabla \phi|^2 \, dx  \\
& \quad = - ((\rho_w - \rho_a)\pd_t \bu_a, \bu) - (\rho_w (\bu \cdot \nabla) \bu_a, \bu) - ((\rho_w - \rho_a) (\bu_a \cdot \nabla) \bu_a, \bu) \\
& \qquad - \tfrac{\overline{\rho}_1 - \overline{\rho}_2}{2} ( \mu \Delta \bu_a, \bu) -\tfrac{\overline{\rho}_1 - \overline{\rho}_2}{2} (\mu \nabla \bu_a, \nabla \bu) - 2 ((\eta_w - \eta_a) \D \bu_a, \nabla \bu) \\
& \qquad + 2 \eta_r( \W \bu_w, \nabla \bu_a) - ((a + F''(\phi_w)) \nabla \phi_w, \phi \bu) - (\phi_w \nabla a, \phi \bu)\\
& \qquad + ( \nabla K \star \phi_w, \phi \bu) - 2 \eta_r( w_w, \curl_2 \bu_a) - ((a + F''(\phi_w)) \nabla \phi, \phi_a \bu) \\
& \qquad - ((F''(\phi_w) - F''(\phi_a)) \nabla \phi_a, \phi_a \bu) - (\phi \nabla a, \phi_a \bu)  + (\nabla K \star \phi, \phi_a \bu) \\
& \qquad - (\bu \cdot \nabla \phi_a, \phi) - (F''(\phi_w) - F''(\phi_a), \nabla \phi_a \cdot \nabla \phi) + (\nabla K \star \phi, \nabla \phi) + (\phi \nabla a, \nabla \phi) \\
& \quad =: E_{1} + \cdots + E_{19}.
    \end{aligned}
\end{equation}
We note that $E_1, \dots, E_6$ are analogous to $D_1, \dots, D_6$, while $E_8, E_9, E_{10}, E_{12}, \dots, E_{19}$ are analogous to $D_8, \dots, D_{14}, D_{23}, \dots, D_{26}$. The difference lies in the new terms $E_7$ and $E_{11}$, which we estimate as follows:
\[
|E_7 + E_{11}| = 2\eta_r|(\curl_2 \bu_a, \tfrac{1}{2}\curl_2 \bu_w -  \omega_w)| \leq 2 \eta_r \| \tfrac{1}{2} \curl_2 \bu_w - \omega_w \|^2 + C \eta_r \| \nabla \bu_a \|^2.
\]
Then, revisiting the estimates for the proof of continuous dependence and uniqueness, and adjusting the prefactors slightly, we infer from \eqref{nAGG:uni:ineq} the following differential inequality
\begin{align*}
& \frac{1}{2} \frac{d}{dt} \int_\Omega \rho_w( |\bu|^2 + |\omega_w|^2) + |\phi |^2 \, dx \\
& \qquad + \int_\Omega \eta_* | \D \bu |^2 + 2 \eta_{r} |\tfrac{1}{2} \curl_2 \bu_w - \omega_w|^2 + c_{d,a,*} |\nabla \omega_w|^2 + \frac{\hat{\alpha}}{2} |\nabla \phi |^2 \, dx \\
& \quad \leq C \widetilde{\mathcal{M}}(t) \int_\Omega \frac{\rho_w}{2} (| \bu |^2 + | \omega_w |^2) + \| \phi \|^2 \, dx + C \eta_r,
\end{align*}
where
\[
\widetilde{\mathcal{M}} := (1 + \| \pd_t \bu_a \|^2 + \| \bu_a \|_{H^2}^2 + \| \nabla \bu_a \|_{L^4}^4 + \| \nabla \phi_a \|_{L^4}^4)(t) \in L^1(0,T).
\]
Hence, \eqref{nAGG:consist} follows from the application of Gronwall's inequality.
\end{proof}

\subsection{Consistency with nonlocal Model H solutions}
Let $\overline{\rho}>0$ be a fixed constant and we introduce the nonlocal Model H (nModelH) in two spatial dimensions to be
\begin{subequations}\label{nonlocal:modelH}
\begin{alignat}{2}
& \div \bu_h = 0, \\[1ex]
& \overline{\rho} (\pd_t \bu_h + (\bu_h \cdot \nabla) \bu_h) - \div( 2\eta(\phi_h)\D\bu_h) + \nabla p_h = \mu_h \nabla \phi_h, \\[1ex]
& \partial_{t}\phi_h + \bu_h \cdot \nabla\phi_h = \Delta \mu_h, \\[1ex] 
& \mu_h = a\phi_h - K \star \phi_h + F'(\phi_h),
\end{alignat}
\end{subequations}
which can be obtained by setting $\overline{\rho}_1 = \overline{\rho}_2 = \overline{\rho}$ in the nAGG model. We establish a consistency estimate between strong solutions of the nAGG model and of the nModelH that is analogous to Theorem 1.10 of \cite{CCGAGMG} but under stronger norms. Then, combining with Theorem \ref{thm:nonpolar:nAGG} we infer the following result.
\begin{thm}\label{thm:nonpolar:nModelH}
For fixed $T> 0$ let $(\bu_h,p_h, \phi_h, \mu_h)$ denote the unique strong solution on $[0,T]$ to nModelH corresponding to initial data $(\bu_h(0), \phi_h(0)) = (\bu_0, \phi_0)$, and let $(\bu_w, \omega_w, p_w, \phi_w, \mu_w)$ denote the unique strong solution on $[0,T]$ to the nMAGG model \eqref{nonlocal:model:equ:2D} with $m(\phi) = 1$ and $\eta_r(\cdot) = \eta_r$ corresponding to the initial data $(\bu_w(0), \omega_w(0), \phi_w(0)) = (\bu_0, 0, \phi_0)$. Then, there exists a constant $C>0$ depending on the norms of the initial data, the terminal time $T$ and the parameters of the systems, but independent of $\eta_r$, $|\overline{\rho}_1 - \overline{\rho}_2|$ and $|\tfrac{1}{2}(\overline{\rho}_1 + \overline{\rho}_2) - \overline{\rho}|$, such that
\begin{equation}\label{nH:consist}
\begin{aligned}
& \sup_{t \in (0,T]} \Big ( \| \bu_w(t) - \bu_h(t) \|^2 + \| \omega_w(t) \|^2 + \| \phi_w(t) - \phi_h(t) \|^2 \Big ) \\
& \quad \leq C \Big (\eta_r +|\tfrac{1}{2}(\overline{\rho}_1 + \overline{\rho}_2) - \overline{\rho}|^2 + |\overline{\rho}_1 - \overline{\rho}_2|^2 \Big ).
\end{aligned}
\end{equation}
\end{thm}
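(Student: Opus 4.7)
The plan is to derive \eqref{nH:consist} by a triangle-inequality argument, combining Theorem~\ref{thm:nonpolar:nAGG} with an intermediate consistency estimate between strong solutions of the nAGG model and the nModelH. Specifically, with $(\bu_a, p_a, \phi_a, \mu_a)$ denoting the unique nAGG strong solution emanating from $(\bu_0, \phi_0)$ guaranteed by Theorem~1.4 of \cite{CCGAGMG}, I would first establish the bound
\begin{align*}
\sup_{t \in (0,T]} \Big ( \| \bu_a(t) - \bu_h(t) \|^2 + \| \phi_a(t) - \phi_h(t) \|^2 \Big ) \leq C \Big ( |\tfrac{1}{2}(\overline{\rho}_1 + \overline{\rho}_2) - \overline{\rho}|^2 + |\overline{\rho}_1 - \overline{\rho}_2|^2 \Big ),
\end{align*}
with $C > 0$ independent of $|\overline{\rho}_1 - \overline{\rho}_2|$ and $|\tfrac{1}{2}(\overline{\rho}_1 + \overline{\rho}_2) - \overline{\rho}|$. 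The estimate \eqref{nH:consist} then follows via $(a+b)^2 \leq 2a^2 + 2b^2$ combined with the $\eta_r$-bound on $\|\omega_w\|^2$ supplied directly by Theorem~\ref{thm:nonpolar:nAGG}.

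To prove the intermediate estimate, I would set $\bu := \bu_a - \bu_h$, $\phi := \phi_a - \phi_h$, $\mu := \mu_a - \mu_h$, $p := p_a - p_h$, and subtract the nAGG and nModelH systems. The central algebraic observation, coming from \eqref{rho:phi}, is
\begin{align*}
\rho(\phi_a) - \overline{\rho} = \tfrac{\overline{\rho}_1 - \overline{\rho}_2}{2} \phi_a + \Big ( \tfrac{1}{2}(\overline{\rho}_1 + \overline{\rho}_2) - \overline{\rho} \Big ),
\end{align*}
which, together with $|\phi_a| \leq 1$, yields $\| \rho(\phi_a) - \overline{\rho} \|_{L^\infty(Q)} \leq \tfrac{1}{2} |\overline{\rho}_1 - \overline{\rho}_2| + |\tfrac{1}{2}(\overline{\rho}_1 + \overline{\rho}_2) - \overline{\rho}|$, together with analogous Lipschitz control for $\eta(\phi_a) - \eta(\phi_h)$. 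Testing the difference momentum equation by $\bu$ and the difference phase-field equation by $\phi$ while invoking the analogue of \eqref{uniq:id4} would produce a coercive left-hand side consisting of $\tfrac{d}{dt} \int_\Omega \rho(\phi_a) |\bu|^2 + |\phi|^2 \, dx$ together with $\int_\Omega 2\eta(\phi_a) |\D \bu|^2 + (a + F''(\phi_a)) |\nabla \phi|^2 \, dx$. The right-hand-side terms that mirror the uniqueness/continuous-dependence argument of Section~\ref{sec:str}---those involving $\bu \cdot \nabla \bu_h$, $(\eta(\phi_a) - \eta(\phi_h)) \D \bu_h$, $\mu_a \nabla \phi$, $\mu \nabla \phi_h$, and their phase-field counterparts---can be treated verbatim using the Gagliardo--Nirenberg inequalities \eqref{GN:ineq} and the strict separation of $\phi_a, \phi_h$ from $\pm 1$.

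The main obstacle will be handling the source terms produced purely by the density mismatch, together with the loss of an automatic cancellation that was available in the uniqueness proof. Concretely, the difference equation features the new contributions
\begin{align*}
- (\rho(\phi_a) - \overline{\rho}) \pd_t \bu_h, \quad - (\rho(\phi_a) - \overline{\rho}) (\bu_h \cdot \nabla) \bu_h, \quad - \tfrac{\overline{\rho}_1 - \overline{\rho}_2}{2} (\nabla \mu_a \cdot \nabla) \bu_a,
\end{align*}
the last of which has no analogue in nModelH since $\rho' \equiv 0$ there. Moreover, the test identity for $(\rho(\phi_a) \pd_t \bu + \rho(\phi_a) (\bu_a \cdot \nabla) \bu, \bu)$ now leaves the residual $\tfrac{\overline{\rho}_1 - \overline{\rho}_2}{4} (\nabla \mu_a, \nabla |\bu|^2)$ instead of cancelling as in \eqref{uniq:id:1}. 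Each such contribution will be estimated by H\"older, Young, the two-dimensional bound $\| \bu \|_{L^4} \leq C \| \bu \|^{1/2} \| \D \bu \|^{1/2}$, and the embedding $H^2(\Omega) \hookrightarrow L^\infty(\Omega)$, absorbing the $\| \D \bu \|^2$ piece into the coercive left-hand side and extracting either a Gronwall coefficient multiplying $\|\bu\|^2$ or a pure source with prefactor $|\overline{\rho}_1 - \overline{\rho}_2|^2 + |\tfrac{1}{2}(\overline{\rho}_1 + \overline{\rho}_2) - \overline{\rho}|^2$ multiplied by time-integrable quantities. Time integrability is guaranteed by the strong-solution regularities $\pd_t \bu_h \in L^2(0,T;\HH_\sigma)$, $\bu_h, \bu_a \in L^\infty(0,T;\HH^1_\sigma) \cap L^2(0,T;\HH^2_\sigma)$ and $\mu_a \in L^\infty(0,T;H^1(\Omega)) \cap L^2(0,T;H^2(\Omega))$. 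A standard Gronwall argument, together with the vanishing initial data $\bu(0) = \bm{0}$ and $\phi(0) = 0$ stemming from the common choice of initial conditions, then closes the intermediate estimate and, through the triangle inequality, yields \eqref{nH:consist}.
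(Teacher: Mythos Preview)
Your proposal is correct and follows essentially the same route as the paper: both argue via the triangle inequality through the intermediate nAGG solution $(\bu_a,\phi_a,\mu_a)$, derive the nAGG--nModelH gap by testing the difference equations with $(\bu,\phi)$, and isolate the density-mismatch source using $\rho(\phi_a)-\overline{\rho}=\tfrac{\overline{\rho}_1-\overline{\rho}_2}{2}\phi_a+\big(\tfrac{\overline{\rho}_1+\overline{\rho}_2}{2}-\overline{\rho}\big)$.

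The one notable tactical difference concerns the flux-transport term $-\tfrac{\overline{\rho}_1-\overline{\rho}_2}{2}(\nabla\mu_a\cdot\nabla)\bu_a$. You keep the kinetic-energy residual $\tfrac{\overline{\rho}_1-\overline{\rho}_2}{4}(\nabla\mu_a,\nabla|\bu|^2)$ and the full source $-\tfrac{\overline{\rho}_1-\overline{\rho}_2}{2}((\nabla\mu_a\cdot\nabla)\bu_a,\bu)$ separate and estimate each directly; this works, but it puts $\|\nabla\mu_a\|_{L^4}^4$ (an nAGG quantity) into the Gronwall coefficient. The paper instead writes $\bu_a=\bu+\bu_h$ so that the $\bu$-part cancels exactly against the kinetic-energy residual via the identity \eqref{uniq:id:1}, leaving only $-\tfrac{\overline{\rho}_1-\overline{\rho}_2}{2}((\nabla\mu_a\cdot\nabla)\bu_h,\bu)$; an integration by parts then produces terms controlled by $\|\mu_a\|_{L^\infty}$ (available from strict separation) times $\|\bu_h\|_{H^2}$, yielding a Gronwall coefficient built purely from nModelH quantities. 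Both variants close; the paper's decomposition gives a slightly cleaner separation between the source and the Gronwall weight.
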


\begin{proof}
It suffices to provide an estimate between strong solutions of the nAGG model and of the nModelH in terms of $|\overline{\rho}_1 - \overline{\rho}_2|$ and $|\frac{1}{2}(\overline{\rho}_1 + \overline{\rho}_2) - \overline{\rho}|$. Returning to the setting of the proof of continuous dependence and uniqueness, we set $(\bu_1, \omega_1, p_1, \phi_1, \mu_1) = (\bu_a, 0, p_a, \phi_a, \mu_a)$ and $(\bu_2, \omega_2, p_2, \phi_2, \mu_2) = (\bu_h, 0, p_h, \phi_h, \mu_h)$. Then, dropping all terms involving $\omega$ we see that the differences $(\bu, p, \phi, \mu)$ satisfy
\begin{subequations}
\begin{alignat}{2}
\label{nH:diff:equ:bu} & \rho_a \pd_t \bu + (\rho_a - \overline{\rho}) \pd_t \bu_h + \rho_a (\bu_a \cdot \nabla) \bu + \rho_a (\bu \cdot \nabla) \bu_h + (\rho_a - \overline{\rho}) (\bu_h \cdot \nabla) \bu_h \\
\notag & \qquad - \tfrac{\overline{\rho}_1 - \overline{\rho}_2}{2} (\nabla \mu_a \cdot \nabla) \bu_a  - 2\div ( \eta_a \D \bu + (\eta_a - \eta_h) \D \bu_h) + \nabla p \\
\notag & \quad = \mu_a \nabla \phi + \mu \nabla \phi_h, \\
\label{nH:uni:diff:equ:CH} & \pd_t \phi + \bu_a \cdot \nabla \phi + \bu \cdot \nabla \phi_h = \Delta \mu, \\
\label{nH:uni:diff:equ:mu} & \mu = F'(\phi_a) - F'(\phi_h) + a \phi - K \star \phi.
\end{alignat}
\end{subequations}
Similar to \eqref{uniq:id:1} we have the identity
\begin{align*}
0 = \int_\Omega -\frac{|\bu|^2}{2} \pd_t \rho_a + \rho_a(\bu_a \cdot \nabla) \bu \cdot \bu - \frac{\overline{\rho}_1 - \overline{\rho}_2}{2} (\nabla \mu_a \cdot \nabla) (\bu_a - \bu_h) \cdot \bu\, dx,
\end{align*}
while recalling \eqref{uniq:id3} and \eqref{uniq:id4}, we obtain upon testing \eqref{nH:diff:equ:bu} by $\bu$, \eqref{nH:uni:diff:equ:CH} by $\phi$ and then summing
\begin{equation}\label{nH:uni:ineq}
    \begin{aligned}
& \frac{1}{2} \frac{d}{dt} \int_\Omega \rho_a |\bu|^2 + |\phi|^2 \, dx + \int_\Omega 2 \eta_a |\D \bu|^2 + (a + F''(\phi_a)) |\nabla \phi|^2 \, dx  \\
& \quad = - ((\rho_a - \overline{\rho})\pd_t \bu_h, \bu) - (\rho_a (\bu \cdot \nabla) \bu_h, \bu) - ((\rho_a - \overline{\rho}) (\bu_h \cdot \nabla) \bu_h, \bu) \\
& \qquad - \tfrac{\overline{\rho}_1 - \overline{\rho}_2}{2} ( \mu_a \Delta \bu_h, \bu) - \tfrac{\overline{\rho}_1 - \overline{\rho}_2}{2} (\mu_a \nabla \bu_h, \nabla \bu) - 2 ((\eta_a - \eta_h) \D \bu_h, \nabla \bu) \\
& \qquad - ((a + F''(\phi_a)) \nabla \phi_a, \phi \bu) - (\phi_a \nabla a, \phi \bu) + ( \nabla K \star \phi_a, \phi \bu) \\
& \qquad - ((a + F''(\phi_a)) \nabla \phi, \phi_h \bu) - ((F''(\phi_a) - F''(\phi_h)) \nabla \phi_h, \phi_h \bu) \\
& \qquad - (\phi \nabla a, \phi_h \bu)  + (\nabla K \star \phi, \phi_h \bu) - (\bu \cdot \nabla \phi_h, \phi) \\
& \qquad - (F''(\phi_a) - F''(\phi_h), \nabla \phi_h \cdot \nabla \phi) + (\nabla K \star \phi, \nabla \phi) + (\phi \nabla a, \nabla \phi) \\
& \quad =: F_{1} + \cdots + F_{17}.
    \end{aligned}
\end{equation}
We note that $F_1, \dots, F_6$ are analogous to $D_1, \dots, D_6$, while $F_7, \dots, F_{17}$ are analogous to $D_8, \dots , D_{14}, D_{23} \dots, D_{26}$, although $F_1$, $F_3$, $F_4$ and $F_5$ have to be treated differently. We begin estimating as follows:
\begin{align*}
 |F_2 + F_6|& \leq \frac{\eta_*}{3} \| \D \bu \|^2 + \frac{\hat{\alpha}}{4} \| \nabla \phi \|^2 + C( 1 + \| \bu_h \|_{H^2}^2)( \| \phi \|^2 + \| \bu \|^2), \\
 |F_{7} + \cdots + F_{17}| & \leq \frac{\eta_*}{3} \| \D \bu \|^2 + \frac{\hat{\alpha}}{4} \| \nabla \phi \|^2 + C(1 + \| \nabla \phi_h \|_{L^4}^4)( \| \phi \|^2 + \| \bu \|^2).
\end{align*}
For $F_1$ and $F_3$ we use the explicit form for $\rho(\phi_a) = \frac{\overline{\rho}_1 - \overline{\rho}_2}{2} \phi_a + \frac{\overline{\rho}_1 + \overline{\rho}_2}{2}$ to estimate
\begin{align*}
 |F_1| & \leq \Big ( |\tfrac{1}{2} (\overline{\rho}_1 + \overline{\rho}_2) - \overline{\rho})| + \tfrac{1}{2} \| \phi_a \|_{L^\infty} |\overline{\rho}_1 - \overline{\rho}_2| \Big ) \| \pd_t \bu_h\| \| \bu \| \\
 & \leq C \| \pd_t \bu_h \|^2 \| \bu \|^2 + C (|\tfrac{1}{2} (\overline{\rho}_1 + \overline{\rho}_2) - \overline{\rho})|^2 + |\overline{\rho}_1 - \overline{\rho}_2|^2), \\
 |F_3| & \leq \Big ( |\tfrac{1}{2} (\overline{\rho}_1 + \overline{\rho}_2) - \overline{\rho})| + \tfrac{1}{2} \| \phi_a \|_{L^\infty} |\overline{\rho}_1 - \overline{\rho}_2| \Big )  \| \bu_h \|_{L^4} \| \nabla \bu_h \|_{L^4} \| \bu \| \\
 & \leq C \| \nabla \bu_h \|_{L^4}^4 \| \bu \|^2 + C (|\tfrac{1}{2} (\overline{\rho}_1 + \overline{\rho}_2) - \overline{\rho})|^2 + |\overline{\rho}_1 - \overline{\rho}_2|^2).
\end{align*}
For $F_4$ and $F_5$ we have
\begin{align*}
|F_4 + F_5| & \leq C|\overline{\rho}_1 - \overline{\rho}_2| \| F'(\phi_a) + a \phi_a - K \star \phi_a \|_{L^\infty} (\| \Delta \bu_h \| \|\bu\| + \|\nabla \bu_h \| \| \D \bu \|) \\
& \leq \frac{\eta_*}{3} \| \D \bu \|^2 + C \| \bu \|^2 + C| \overline{\rho}_1 - \overline{\rho}_2|^2 \| \bu_h \|_{H^2}^2 .
\end{align*}
Hence, we deduce from \eqref{nH:uni:ineq} the differential inequality
\begin{align*}
& \frac{1}{2} \frac{d}{dt} \int_\Omega \rho_a |\bu|^2 + |\phi|^2 \, dx + \int_\Omega \eta_a|\D \bu|^2 + \frac{\hat{\alpha}}{2} |\nabla \phi|^2 \, dx \\
& \quad \leq C (1 + \| \bu_h \|_{H^2}^2 + \| \nabla \phi_h \|_{L^4}^4 + \| \pd_t \bu_h |^2) ( \| \bu \|^2 + \| \phi \|^2) \\
& \qquad + C(1 + \| \bu_h \|_{H^2}^2) (|\tfrac{1}{2} (\overline{\rho}_1 + \overline{\rho}_2) - \overline{\rho})|^2 + |\overline{\rho}_1 - \overline{\rho}_2|^2).
\end{align*}
By Gronwall's inequality we arrive at
\begin{align*}
& \sup_{t \in (0,T]} \Big ( \| \bu_a(t) - \bu_h(t) \|^2 + \| \phi_a(t) - \phi_h(t) \|^2 \Big ) + \int_0^T \| \D (\bu_a - \bu_h) \|^2 + \| \nabla (\phi_a - \phi_h) \|^2 \, dt \\
& \quad \leq C (|\tfrac{1}{2} (\overline{\rho}_1 + \overline{\rho}_2) - \overline{\rho})|^2 + |\overline{\rho}_1 - \overline{\rho}_2|^2).
\end{align*}
Together with \eqref{nAGG:consist} and the triangle inequality we infer the consistency estimate \eqref{nH:consist}.
\end{proof}

\section*{Acknowledgments}
\noindent KFL gratefully acknowledges the support by the Research Grants Council of the Hong Kong Special Administrative Region, China [Project No.: HKBU 14303420].

\footnotesize

\end{document}